\documentclass{article}

\usepackage{titletoc}
\usepackage{hyperref}       

\usepackage[numbers]{natbib}
\usepackage[nonatbib,preprint]{neurips_2019}
\usepackage{geometry}

\usepackage[utf8]{inputenc} 
\usepackage[T1]{fontenc}    
\usepackage{url}            
\usepackage{booktabs}       
\usepackage{amsfonts}       
\usepackage{nicefrac}       
\usepackage{microtype}      
\usepackage{algorithmic}
\usepackage{xcolor}
\usepackage[]{algorithm2e}
\usepackage{capt-of}

\usepackage{amsmath,amsfonts,amssymb,amsthm}
\usepackage{graphicx}
\usepackage{microtype}

\usepackage{subfigure}
\usepackage{booktabs} 
\usepackage{bbold}

\usepackage{color}

\newcommand{\pnm}{\mathcal{P}_{[N-1]}^{(1)}}
\newcommand{\pnp}{\mathcal{P}_{[N]}^{(1)}}
\newcommand{\pim}{\mathcal{P}_{[i-1]}^{(1)}}
\newcommand{\pip}{\mathcal{P}_{[i]}^{(1)}}
\newcommand{\yex}{y^{\text{extr}}}
\newcommand{\cl}{c^{\lambda}}
\newcommand{\cheb}{\mathcal{C}^{\tau,\kappa}}

\newcommand{\chebpoly}{\mathcal{T}^{\tau,\kappa}}

\title{Online Regularized Nonlinear Acceleration}

\author{%
  Damien Scieur \\
  Department of Computer Science\\
  Princeton University, Princeton, USA\\
  \texttt{dscieur@cs.princeton.edu} \\
   \And
     Edouard Oyallon\\
  CVN \& INRIA\\
  CentraleSupélec, Gif-sur-Yvette, France\\
  \texttt{edouard.oyallon@centralesupelec.fr} \\
   \And
    Alexandre d'Aspremont \\
    CNRS \& D.I., UMR 8548,\\
    École Normale Supérieure, Paris, France. \\
  \texttt{aspremon@di.ens.fr} \\
   \And
     Francis Bach \\
    INRIA \& D.I., UMR 8548,\\
    École Normale Supérieure, Paris, France. \\
  \texttt{francis.bach@inria.fr} \\
}
\newtheorem{theorem}{Theorem}[section]
\newtheorem{proposition}[theorem]{Proposition}
\newtheorem{definition}[theorem]{Definition}

\renewenvironment{proof}{\textbf{Proof.}}{\QED\bigskip}


 \numberwithin{dummy}{section}


\definecolor{ddarkbrown}{rgb}{0.5,0.2,0.05} \definecolor{bbluegray}{rgb}{0.05,0,0.5}

\newcommand{\BEAS}{\begin{eqnarray*}}
\newcommand{\EEAS}{\end{eqnarray*}}
\newcommand{\BEA}{\begin{eqnarray}}
\newcommand{\EEA}{\end{eqnarray}}
\newcommand{\BEQ}{\begin{equation}}
\newcommand{\EEQ}{\end{equation}}
\newcommand{\BIT}{\begin{itemize}}
\newcommand{\EIT}{\end{itemize}}
\newcommand{\BNUM}{\begin{enumerate}}
\newcommand{\ENUM}{\end{enumerate}}

\newcommand{\BA}{\begin{array}}
\newcommand{\EA}{\end{array}}




\newcommand{\reals}{{\mathbb R}}

\newcommand{\symm}{{\mbox{\bf S}}}  





\newcommand{\QED}{~~\rule[-1pt]{6pt}{6pt}}

\newcommand{\argmin}{\mathop{\rm argmin}}





\newif\ifisarxiv
\begin{document}
\isarxivtrue

\maketitle

\begin{abstract}
Regularized nonlinear acceleration (RNA) estimates the minimum of a function by post-processing iterates from an algorithm such as the gradient method. It can be seen as a regularized version of Anderson acceleration, a classical acceleration scheme from numerical analysis. The new scheme provably improves the rate of convergence of fixed step gradient descent, and its empirical performance is comparable to that of quasi-Newton methods. However, RNA cannot accelerate faster multistep algorithms like Nesterov's method and often diverges in this context. Here, we adapt RNA to overcome these issues, so that our scheme can be used on fast algorithms such as gradient methods with momentum. We show optimal complexity bounds for quadratics and asymptotically optimal rates on general convex minimization problems. Moreover, this new scheme works online, i.e.~extrapolated solution estimates can be reinjected at each iteration, significantly improving numerical performance over classical accelerated methods. 
\end{abstract}

\section{Introduction}

First-order methods are popular tools to minimize high dimensional functions, as their memory requirements and cost per iteration are relatively low. The convergence rate of these methods, however, can be relatively slow in some cases. Consider for example, a $L-$smooth, $\mu-$strongly convex function $f$. The gradient descent scheme with fixed step size $h$ reads
\BEQ
    x_{i+1} = x_i - h\nabla f(x_i), \label{eq:gradient_step}
\EEQ
where $x_i\in\reals^d$. If $h = {1}/{L}$, we have the following bound
\BEQ
    \textstyle f(x_i)-f(x^*) \leq O\big(\left(1-\frac{\mu}{L}\right)^i (f(x_0)-f(x^*))\big), \label{eq:conv_gradient_method}
\EEQ
where $\kappa ={\mu}/{L}$ is usually referred to as the inverse condition number \citep{nesterov2013introductory}. The number of iteration to reach a precision $\epsilon$ is thus, in theory, proportional to $\kappa^{-1}$ and bad conditioning has indeed a significant impact on empirical performance. This can be partly fixed using the fast gradient method, an optimal accelerated first-order method derived by \citet{nesterov1983method}, which matches the lower complexity bound~\cite{nesterov2013introductory}. This algorithm maintains two sequences, and reads
\BEQ \label{eq:nesterov_algo}
        \textstyle x_{i+1} = y_i - h\nabla f(y_i), \qquad  y_{i+1}  \textstyle= (1+\beta) x_{i+1} - \beta x_{i}, \quad \beta = \frac{1-\sqrt{\kappa}}{1+\sqrt{\kappa}}.
\EEQ
It has an improved convergence rate compared to gradient methods. If $h = 1/L$, its convergence is bounded by
\BEQ
    \textstyle f(x_i)-f(x^*) \leq O\big(\left(1-\sqrt{\kappa}\right)^i (f(x_0)-f(x^*))\big),
    \label{eq:rate_nesterov}
\EEQ
hence the fast gradient method is roughly $\sqrt{\kappa^{-1}}$ faster than the gradient method. Unfortunately, the algorithm requires knowledge of the parameter $\kappa$, hence that of $\mu$. This strong convexity constant is typically hard to estimate, and obtaining an accurate online estimation is still an open problem (see~\cite{fercoq2016restarting} for more details).

Quasi-Newton methods form another class of accelerated first-order methods. These algorithms approximate the Newton step by estimating $H\approx \nabla^{-2} f(x_i)$, then perform the step
\[
    x_{i+1} = x_{i} - h H \nabla f(x_i),
\]
where $h$ is the step-size.  The most heavily used scheme of this kind is L-BFGS, which maintains a symmetric positive definite $H$ using efficient rank-two updates. \cite{nocedal2006nonlinear} presents some result about its rate of convergence, but those are very conservative compared to their gradient descent counterpart. In practice, L-BFGS is particularly efficient even in non-convex and non-smooth settings \cite{lewis2009nonsmooth}.

Recently, \citet{scieur2016regularized} proposed a generic way to accelerate an iterative algorithm derived from Anderson acceleration \citep{anderson1965iterative}, which is a classical acceleration scheme from numerical analysis. Contrary to standard accelerated optimization schemes, this algorithm does not require information on problem-dependent parameters. It mixes the approaches of Nesterov and quasi-Newton, as it linearly combines the points $x_j$ using coefficients computed from previous iterates (hence the nonlinear name). \citet{scieur2016regularized,scieur2017nonlinear} show that RNA produces solutions asymptotically reaching the optimal convergence rate of \citep{nesterov2013introductory}, using only iterates from the fixed-step gradient method in~\eqref{eq:gradient_step}. 

Unfortunately there are big limitations for the RNA algorithms. First, their theoretical analysis was limited to basically gradient descent. Practically, it was unable to accelerate other algorithms. For instance, accelerating Nesterov's method usually leads to a method with a slower emprirical rate of convergence. The second limitation was the way RNA accelerates algorithms. In \cite{scieur2016regularized}, the RNA algorithm was used in a restart fashion, waiting $N$ iterates from gradient descent, then restarting the gradient algorithm from an extrapolated point. This raises the two following questions:
\begin{enumerate}
    \item Can we derive a generic acceleration algorithm that works when accelerating methods with momentum (such as Nesterov Method)?
    \item Is it possible to design an acceleration algorithm whose acceleration step is done \textit{online}, i.e., after each iteration?
\end{enumerate}

\paragraph{Contributions.} 
This work makes the following contributions.

\begin{itemize}
    \item We adapt the RNA scheme from \citep{scieur2016regularized} to handle multi-step algorithms. The previous version was limited to basically gradient descent which precluded its application to faster momentum methods for example.
    \item We analyse the convergence and stability of our algorithm. For quadratic functions its rate is similar to conjugate gradient, and we show asymptotically optimal convergence rates for optimizing non-quadratic or stochastic problems.
    \item We also combine RNA and Nesterov's method in a way that ensures the optimal convergence rate on smooth and (strongly)-convex functions, while being adaptive to the smooth and strong convexity parameters.
    \item We perform extensive numerical experiments on convex and nonconvex problems, showing that our approach in competitive with state-of-the-art methods such as Nesterov's fast gradient or BFGS. We also show the scalability of Online RNA with some experiments on convolutional neural networks (CNNs) with millions of parameters.
\end{itemize}

\subsection{Related work}

Our result improves the RNA algorithm, originally derived from the minimal polynomial extrapolation method \cite{cabay1976polynomial} and directly related to Anderson acceleration. In short, both methods focus on accelerating the following class of algorithm, written
\BEQ\label{eq:rna-up}
    x_{i+1} = g(y_i), \quad y_{i+1} = x_{i+1},
\EEQ
where $g(x)$ is typically the gradient step $\eqref{eq:gradient_step}$. The augmented notation with $y$ will be useful later for the generalization of this class.

After $N$ iterations, RNA or Anderson acceleration computes coefficients $c$ to combine the iterates following $\textstyle \yex = \sum_{i=1}^N c_i x_i$, and the extrapolated solution $\yex$ usually converges faster to $x^*$ than $x_N$ as $N$ grows. This uses the information of the past $N$ iterates, contained in the matrices $X$ and $Y$ defined below,
\[
    X = [x_1\ldots x_N], \quad Y = [y_0\ldots y_{N-1}].
\]
The biggest difference between the two algorithms comes from the formula for the $c_i's$. Consider the residual $r(x)$ and the residual matrix $R$
\BEQ
    r(x) \triangleq x-g(x), \qquad R \triangleq [r(y_0),\, r(y_1),\, \ldots,\, r(y_N)] \;=\; Y-X. \label{eq:residual}
\EEQ
Using those notations, RNA solves for $\lambda > 0$
\[
    \textstyle c := \argmin_c \| Rc \|^2 + \lambda \|c\|^2, \quad \sum_{i=1}^N c_i = 1 \qquad (\text{where } R \triangleq X-Y),
\]
while Anderson use the same formula in the particular case where $\lambda = 0$. There is a closed-form formula for the minimization problem (cf. Algorithm \ref{algo:rna}) with complexity bounded by $O(N^2d)$ where $N$ is usually around $10$. In practice, the additionnal computation time required by RNA is negligible compared to gradient steps for medium or large-scale problems.

Recent results \cite{scieur2016regularized,scieur2017nonlinear} produce theoretical convergence guarantees for linear, nonlinear or stochastic functions $g$ using a perturbation argument on the quadratic case where $g$ is affine. For instance, RNA on fixed step gradient descent is asymptotically optimal, matching the complexity bound of \cite{nesterov2013introductory}. There is no such proof for Anderson acceleration, as the algorithm is shown to be unstable in some cases \cite{scieur2016regularized}. For the special case where we apply RNA to a nonsymmetric algorithm (i.e., the Jacobian of $g(x)$ is not symmetric), \citet{bollapragada2019nonlinear} derived a bound that shows acceleration in the convergence. However, their results do not show an optimal rate for the class of algorithm in~\eqref{eq:general_iteration}.

\section{Nonlinear Acceleration of Linear Algorithms}

We consider the following class of algorithms, written
\BEA \label{eq:general_iteration}
    x_{i} = g(y_{i-1})\;; \qquad
    y_i = \textstyle \sum_{j=1}^i \alpha_j^{(i)} x_j + \beta_j^{(i)} y_{j-1},
\EEA
where $x_i,y_i\in\reals^d$ and $g: \reals^d \to \reals^d$ is an iterative update, potentially stochastic. For example, $g(x)$ can be a gradient step with fixed stepsize as in~\eqref{eq:gradient_step}, with $g(x)=x - h\nabla f(x)$. We assume the following condition on the coefficients $\alpha$ and $\beta$, that ensure the consistency of the algorithm \citep{scieur2017integration},
\[
    \textbf{1}^T(\alpha + \beta) = 1, \quad \forall k, \ \alpha_j \neq 0.
\]
Nesterov's fast gradient method for instance belongs to this class of algorithms. We can write these updates in matrix format, with
\BEQ\label{eq:def_xy}
    X_i = [x_1,x_2,\ldots, x_i], \quad  Y_i = [y_0,y_1,\ldots, y_{i-1}]. 
\EEQ
Using this notation, \eqref{eq:general_iteration} reads (assuming $x_0=y_0$)
\BEA\label{eq:general_iteration_matrix}
    X_i = g(Y_{i-1})\,, \qquad Y_i = [x_0, X_{i}]L_i,
\EEA
where $g(Y)$ stands for $[g(y_0),g(y_1),\ldots,g(y_{i-1})]$ and the matrix $L_i$ is upper-triangular of size $i\times i$ with nonzero diagonal coefficients, whose columns sum to one. The matrix $L_i$ is constructed iteratively, following the recurrence
\BEQ
    L_i = \begin{bmatrix} L_{i-1} & \alpha_{[1:i-1]} + L_{i-1}\beta \\
    0_{1\times i-1} & \alpha_i
    \end{bmatrix}, \quad L_0 = 1.\label{eq:recurence_L}
\EEQ
In short, $L_i$ gathers coefficients from the linear combination in~\eqref{eq:general_iteration}. This matrix, together with $g$, characterizes the algorithm.

\subsection{Linear Algorithms}
In this section, we focus on iterative algorithms $g$ that are linear, i.e., where
\BEQ
    g(x) = G(x-x^*) + x^*. \label{eq:linear_g}
\EEQ
Here, $x^*$ is a fixed point of $g$, which corresponds in optimization to the minimum of an objective function. We first treat the case where $g(x)$ is linear, as the nonlinear case will then be extended as a perturbation of the linear one. 

Its worth mentionning that \eqref{eq:linear_g} is equivalent to $Ax+b$. We suppose $G$ symmetric. The case where $G$ is nonsymmetric is studied in \citep{bollapragada2019nonlinear}, but it leads to a more complicated analysis. We write $\pnp$, the set of all polynomials $p$ whose degree is \textit{exactly} $N$ (i.e., the leading coefficient is nonzero), and whose coefficients sum to one. More formally,
\BEQ
    \pnp = \{ p \in \reals[x]: \deg(p) = N,\,  p(1) = 1  \}.
\EEQ
The following proposition extends a result by \citet{scieur2016regularized} showing that \eqref{eq:general_iteration} can be written using polynomials in $\pnp$. \ifisarxiv\else Its proof can be found in Appendix \ref{prop:poly_iter_proof}.\fi
\begin{proposition} \label{prop:poly_iter}
Let $g$ be the linear function \eqref{eq:linear_g}. Then, the $N$-th iteration of \eqref{eq:general_iteration} is equivalent to
\BEAS
    x_N = x^* + G(y_{N-1}-x^*), \qquad y_N = x^* + p_N(G)(x_0-x^*),\quad \mbox{for some $p_N\in \pnp$.}
\EEAS
\end{proposition}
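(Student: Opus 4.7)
The plan is to prove the proposition by induction on $N$, tracking both quantities simultaneously. The key identity is that under the linear update $g(x) = G(x-x^\star)+x^\star$, we have $g(x)-x^\star = G(x-x^\star)$, so displacements from $x^\star$ evolve by left-multiplication by $G$, which is exactly what produces polynomials in $G$.

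For the base case, $y_0 = x_0 = x^\star + p_0(G)(x_0-x^\star)$ with $p_0 \equiv 1$, which lies in $\mathcal{P}_{[0]}^{(1)}$. For the inductive step, suppose that for every $j \le i-1$ we have $y_j = x^\star + q_j(G)(x_0-x^\star)$ with $q_j \in \mathcal{P}_{[j]}^{(1)}$. Then the linearity of $g$ gives
\[
    x_j - x^\star \;=\; g(y_{j-1}) - x^\star \;=\; G\,q_{j-1}(G)(x_0-x^\star),
\]
which already establishes the first claim $x_N = x^\star + G(y_{N-1}-x^\star)$ at step $i=N$. For the second claim, using the consistency condition $\mathbf{1}^T(\alpha+\beta)=1$ to rewrite $y_i$ as a combination of displacements from $x^\star$,
\[
    y_i - x^\star \;=\; \textstyle\sum_{j=1}^{i}\Big(\alpha_j^{(i)}(x_j-x^\star) + \beta_j^{(i)}(y_{j-1}-x^\star)\Big) \;=\; p_i(G)\,(x_0-x^\star),
\]
where $p_i(\lambda) = \sum_{j=1}^{i}\bigl(\alpha_j^{(i)}\lambda + \beta_j^{(i)}\bigr)\,q_{j-1}(\lambda)$.

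It then remains to check that $p_i \in \mathcal{P}_{[i]}^{(1)}$. The normalization is immediate: $p_i(1) = \sum_j (\alpha_j^{(i)}+\beta_j^{(i)})\,q_{j-1}(1) = 1$, using the inductive hypothesis $q_{j-1}(1)=1$ and the consistency condition. For the degree, I would isolate the top-degree contribution: the term $j=i$ contributes $\alpha_i^{(i)}\lambda\,q_{i-1}(\lambda)$, a polynomial of degree exactly $i$, while all terms with $j<i$ contribute polynomials of degree at most $i-1$. Since $\alpha_i^{(i)} \neq 0$ and $q_{i-1}$ has nonzero leading coefficient by induction, the leading coefficient of $p_i$ is nonzero, so $\deg(p_i) = i$ exactly. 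This closes the induction and yields the result at $i=N$.

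The only mildly delicate point is verifying that the leading coefficient does not accidentally vanish; this is where the assumption $\alpha_j^{(i)} \neq 0$ stated in the algorithm's consistency conditions is essential. Everything else reduces to bookkeeping on the recurrence, and no calculation specific to any particular choice of $\alpha,\beta$ (e.g.\ Nesterov's) is needed.
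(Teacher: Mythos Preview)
Your proof is correct and follows essentially the same approach as the paper's: strong induction on $i$, writing $y_i-x^\star=\sum_j(\alpha_j^{(i)}G+\beta_j^{(i)}I)\,q_{j-1}(G)(x_0-x^\star)$, then checking degree via the $j=i$ term with $\alpha_i^{(i)}\neq 0$ and normalization via the consistency condition $\mathbf{1}^T(\alpha+\beta)=1$. Your handling of the leading-coefficient non-vanishing is in fact a bit more explicit than the paper's.
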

\ifisarxiv\begin{proof}
We prove the result iteratively. Of course, at iteration 0,
    \[
        y_0 = x^* + 1 \cdot (x_0 - x^*),
    \]
    and $1$ is indeed  polynomial of degree $0$ whose coefficient sum to one. Now, assume
    \[
        y_{i-1} - x^* = p_{i-1}(G)(x_0-x^*), \quad p_{i-1} \in \pim.
    \]
    We now show that
    \[
        y_{i} - x^* = p_{i}(G)(x_0-x^*), \quad p_{i} \in \pip.
    \]
    By definition of $y_{i}$ \eqref{eq:general_iteration},
    \[
        y_{i} - x^* = \textstyle \sum_{j=1}^i \alpha_j^{(i)} x_j + \beta_j^{(i)} y_{i-1} - x^*,
    \]
    where $(\alpha + \beta)^T\textbf{1} = 1$. This also means that
    \[
        y_{i} - x^* = \textstyle \sum_{j=1}^i \alpha_j^{(i)} (x_j-x^*) + \beta_j^{(i)} (y_{j-1}-x^*) .
    \]
    By definition, $x_{j} -x^* = G(y_{j-1} - x^*) $, so
    \[
        y_{i} - x^* = \textstyle \sum_{j=1}^i \left(\alpha_j^{(i)} G  + \beta_j^{(i)} I\right) (y_{j-1}-x^*) .
    \]
    By the assumption of recurrence,
    \[
        y_{i} - x^* = \textstyle \sum_{j=1}^i \left(\alpha_j^{(i)} G  + \beta_j^{(i)} I\right) p_{j-1}(G) (x_0-x^*).
    \]
    This is a linear combination of polynomials, thus $y_{i} - x^* = p(G)(x_0-x^*)$. It remains to show that $p \in \pip$. Indeed,
    \[
        \deg (p) = \max_j   \max \left\{(1+\deg (p_{j-1}(G))) 1_{\alpha_j \neq 0},\;\; \deg (p_{j-1}(G)) 1_{\beta_j \neq 0}\right\},
    \]
    where $1_{\alpha_j \neq 0} = 1$ if $\alpha_j \neq 0$ and $0$ otherwise. By assumption, $\alpha_i \neq 0$ thus
    \[
        \deg (p) \geq 1+\deg (p_{i-1}(G)) = i.
    \]
    Since $p$ is a  linear combination of polynomials of degree at most $i$, \[
        \deg (p) = i.
    \]
    It remains to show that $p(1)=1$. Indeed,
    \[
        p(1) = \sum_{j=1}^i \left(\alpha_j^{(i)} 1  + \beta_j^{(i)} \right) p_{j-1}(1).
    \]
    Since $\left(\alpha_j^{(i)} 1  + \beta_j^{(i)} \right) = 1$ and $p_{j-1}(1) = 1$, $p(1) = 1$ and this prove the proposition.
\end{proof}\fi

\subsection{Offline Acceleration}
We now propose a modification of RNA that can accelerate any algorithm of the form \eqref{eq:general_iteration} by combining the approaches of \cite{anderson1965iterative} and \cite{scieur2016regularized}. We introduce a mixing parameter~$\beta$, as in Anderson acceleration. Throughout this paper, the \textbf{RNA} algorithm now refers to Algorithm \ref{algo:rna} below. To simplify the theoretical analysis, we also introduce \emph{constrained} nonlinear acceleration \textbf{CNA} as Algorithm~\ref{algo:cna}.

Algorithms \ref{algo:rna} and~\ref{algo:cna} both lead to an equivalent solution (Section \ref{sec:equivalence_algo}), using different trade-offs. RNA is computationally more efficient as we have a closed-form formula for its solution, while CNA leads to a simpler convergence analysis. The two algorithms are in fact equivalent, because the solution of one can be deduced from the other. A more detailed discution can be found in \ifisarxiv Section \else Appendix \fi \ref{sec:equivalence_algo}.

\begin{algorithm}[htb]
\caption{Regularized Nonlinear Acceleration (\textbf{RNA})}
\label{algo:rna}
\fbox{\parbox{0.95\linewidth}{
\begin{algorithmic}[1]
   \STATE {\bfseries Data:} Matrices $X$ and $Y$ of size $d\times N$ constructed from the iterates as in~\eqref{eq:general_iteration} and~\eqref{eq:def_xy}.
   \STATE {\bfseries Parameters:} Mixing $\beta\neq 0$, regularization $\lambda \geq 0$.\\
   \hrulefill
   \STATE \textbf{1.} Compute matrix of residuals $R = X-Y$.
   \STATE \textbf{2.} Solve
   \BEQ
        \textstyle \cl = \frac{(R^TR+(\lambda\|R\|^2_2) I)^{-1} \textbf{1}_N}{\textbf{1}_N^T(R^TR+(\lambda\|R\|^2_2) I)^{-1}\textbf{1}_N}. \label{eq:cl}
    \EEQ
    \STATE \textbf{3.} Compute extrapolated solution $\yex = (Y-\beta R)\cl$.
\end{algorithmic}
}}
\end{algorithm}

\begin{algorithm}[htb]
   \caption{Constrained Nonlinear Acceleration (\textbf{CNA})}
    \label{algo:cna}
\fbox{\parbox{0.95\linewidth}{
\begin{algorithmic}
   \STATE {\bfseries Data:} Matrices $X$ and $Y$ of size $d\times N$ constructed from the iterates as in~\eqref{eq:general_iteration} and~\eqref{eq:def_xy}.
   \STATE {\bfseries Parameters:} Mixing $\beta\neq 0$, constraint $\tau \geq 0$.\\
   \hrulefill
   \STATE \textbf{1.} Compute matrix of residuals $R = X-Y$.
   \STATE \textbf{2.} Solve
   \BEQ
        \textstyle  c^{(\tau)} = \argmin_{c:c^T\textbf{1} = 1} \|Rc\|_2 \quad \text{s.t. } \; \|c\|_2\leq {\textstyle \frac{1+\tau}{\sqrt{N}}} \label{eq:ctau}
    \EEQ
    \STATE \textbf{3.} Compute extrapolated solution $\yex = (Y-\beta R)c^{(\tau)}$.
\end{algorithmic}
}}
\end{algorithm}

\textbf{Computational Complexity of RNA.} \citet{scieur2016regularized} discuss the complexity of Algorithm \ref{algo:rna} in the case where $N$ is small (compared to $d$). When the algorithm is used once on $X$ and $Y$ (batch acceleration), the computational complexity is $O(N^2 d)$, because we have to multiply $R^T$ and $R$. However, when Algorithm \eqref{algo:rna} accelerates the iterates on-the-fly, the matrix $R^TR$ can be updated using only $O(Nd)$ operations. The complexity of solving the linear system is negligible as it takes only $O(N^3)$ operation. Even if the cubic dependence is bad for large $N$, in our experiment $N$ is typically equal to 10, thus adding a negligible computational overhead compared to the computation of a gradient in large dimension which is higher by orders. This is confirmed by our numerical experiments in Section \ref{sec:num_experiment}.

\ifisarxiv
\subsection{Equivalence Between Constrained \& Regularized Nonlinear Acceleration}\label{sec:equivalence_algo}
\fi

The parameters $\lambda$ in Algorithm \ref{algo:rna} and $\tau$ in Algorithm \ref{algo:cna} play similar roles. High values of $\lambda$ give coefficients close to simple averaging, and $\lambda = 0$ retrieves Anderson Acceleration. We have the same behavior when $\tau = 0$ or $\tau = \infty$. We can jump from one algorithm to the other using dual variables, since~\eqref{eq:cl} is the Lagrangian relaxation of the convex problem \eqref{eq:ctau}. This means that, for all values of $\tau$ there exists $\lambda = \lambda(\tau)$ that achieves $c^{\lambda} = c^{(\tau)}$. In fact, we can retrieve $\tau$ from the solution $c^{\lambda}$ by solving
\[
    \textstyle \frac{1+\tau}{\sqrt{N}} = \|c^{\lambda}\|_2.
\]
Conversely, to retrieve $\lambda$ from $c^{(\tau)}$, it suffices to solve
\BEQ
    \left\| \frac{(R^TR+(\lambda\|R\|^2_2) I)^{-1} \textbf{1}_N}{\textbf{1}_N^T(R^TR+(\lambda\|R\|^2_2) I)^{-1}\textbf{1}_N} \right\|^2 = \frac{(1+\tau)^2}{N}, \label{eq:nonlinear_equation}
\EEQ
assuming the constraint in \eqref{eq:ctau} tight, otherwise $\lambda = 0$. Because the norm in \eqref{eq:nonlinear_equation} is increasing with $\lambda$, a binary search or one-dimensional Newton methods gives the solution in a few iterations.

The next proposition bounds the norm of the coefficients of Algorithm~\ref{algo:rna} with an expression similar to~\eqref{eq:ctau}.
\begin{proposition}
    The norm of $c^{\lambda}$ from \eqref{eq:cl} is bounded by
    \BEQ
        \textstyle \|c^{\lambda}\|_2 \leq \frac{1}{\sqrt{N}}\sqrt{1+\frac{1}{\lambda}} \label{eq:bound_norm_lambda}
    \EEQ
\end{proposition}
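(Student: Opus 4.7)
The plan is to identify $c^{\lambda}$ as the solution to a quadratic program, then sandwich the optimal value by a coercive lower bound (giving a handle on $\|c^{\lambda}\|$) and an upper bound obtained by plugging in a convenient feasible point (namely uniform averaging).

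First I would observe that the closed-form in \eqref{eq:cl} is exactly the KKT solution of
\begin{equation*}
    \min_{c} \ c^T M c \quad \text{s.t.} \quad \mathbf{1}_N^T c = 1, \qquad \text{where } M \triangleq R^T R + \lambda \|R\|_2^2 \, I.
\end{equation*}
Indeed, the Lagrangian stationarity condition yields $c = \mu M^{-1}\mathbf{1}_N$, and the equality constraint fixes $\mu = 1/(\mathbf{1}_N^T M^{-1}\mathbf{1}_N)$, reproducing \eqref{eq:cl}. By a standard calculation the optimal value equals $1/(\mathbf{1}_N^T M^{-1}\mathbf{1}_N)$.

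Next, since $R^T R \succeq 0$, the objective admits the lower bound
\begin{equation*}
    (c^{\lambda})^T M\, c^{\lambda} \;\geq\; \lambda \|R\|_2^2 \,\|c^{\lambda}\|_2^2.
\end{equation*}
For the upper bound I would evaluate the objective at the feasible uniform vector $c_u = \mathbf{1}_N/N$, giving
\begin{equation*}
    c_u^T M\, c_u \;=\; \tfrac{1}{N^2}\|R\mathbf{1}_N\|_2^2 + \tfrac{\lambda\|R\|_2^2}{N} \;\leq\; \tfrac{\|R\|_2^2}{N} + \tfrac{\lambda\|R\|_2^2}{N} \;=\; \tfrac{(1+\lambda)\|R\|_2^2}{N},
\end{equation*}
using $\|R\mathbf{1}_N\|_2 \leq \|R\|_2\sqrt{N}$. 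By optimality of $c^{\lambda}$, the same quantity bounds $(c^{\lambda})^T M c^{\lambda}$ from above.

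Combining the two inequalities, $\lambda \|R\|_2^2 \|c^{\lambda}\|_2^2 \leq (1+\lambda)\|R\|_2^2/N$, and dividing through by $\lambda \|R\|_2^2$ gives $\|c^{\lambda}\|_2^2 \leq (1+1/\lambda)/N$, which is \eqref{eq:bound_norm_lambda}. There is no real obstacle here; the only subtle point is handling the degenerate case $\|R\|_2 = 0$, in which $R=0$ makes the bound vacuous (any feasible $c$ yields zero residual), and one can dismiss it separately or by a limiting argument.
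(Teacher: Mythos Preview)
Your argument is correct: recognizing $c^{\lambda}$ as the minimizer of $c^{T}Mc$ over the affine constraint, then sandwiching the optimal value between $\lambda\|R\|_2^2\|c^{\lambda}\|_2^2$ from below and the value at $c_u=\mathbf{1}_N/N$ from above, cleanly yields the bound. The paper does not actually prove this proposition in-line; it simply cites Proposition~3.2 of \cite{scieur2016regularized}, so your self-contained derivation is a genuine addition rather than a duplication, and the variational comparison with the uniform-averaging point is exactly the standard route used in that reference.
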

\begin{proof}
    See \citet{scieur2016regularized}, (Proposition 3.2).
\end{proof}

Having established the equivalence between constrained and regularized nonlinear acceleration, the next section discusses the computational complexity of RNA.

\subsection{Optimal Convergence Rate}
We now analyze the convergence rate of Algorithm \ref{algo:rna} with $\lambda = 0$, or equivalently Algorithm \ref{algo:cna} with $\tau = \infty$, which corresponds to Anderson acceleration \cite{anderson1965iterative}. In particular, we show its optimal rate of convergence when $g$ is a linear function. In the context of optimization, this is equivalent to the application of gradient descent for minimizing quadratics. Using this special structure, the iterations produce a sequence of polynomials (see Proposition \ref{prop:poly_iter}) and the next theorem uses this special property to prove the optimal convergence rate. Compared to previous work in this vein, \cite{scieur2016regularized,scieur2017nonlinear} where the results only apply to algorithm of the form $x_{i+1} = g(x_i)$, this theorem applies to \textit{any} algorithm of the class \eqref{eq:general_iteration}.

\begin{theorem} \label{thm:optimal_rate}

Let $X$, $Y$ in~\eqref{eq:def_xy} be formed using iterates from \eqref{eq:general_iteration}. Let $g$ be the linear function in~\eqref{eq:linear_g} where
    \BEAS
        G\in \symm_d, \quad 0 \preceq G \preceq (1-\kappa)I, \quad 0<\kappa<1,
    \EEAS
    where $\symm_d$ is the set of symmetric matrices of size $d\times d$. The norm of the residual~\eqref{eq:residual} of the extrapolated solution $\yex$ produced by Algorithm \ref{algo:rna} with $\lambda = 0$, or equivalently by Algorithm \ref{algo:cna} with $\tau = \infty$,  is bounded by
    \[
        \|r(\yex)\|_2 \leq \| I-\beta (G-I)  \|_2 ~\| p^*_{N-1}(G)r(x_0)  \|_2,
    \]
    where $p^*_{N-1}$ solves
    \[
        \textstyle p^*_{N-1} = \argmin_{p\in \pnm} \| p(G)r(x_0)  \|_2.
    \]
    We have the following bound on $p^*_{N-1}$,
    \[
    \| p^*_{N-1}(G)r(x_0)  \|_2 \leq
    \begin{cases}
        \left(\frac{1-\sqrt{\kappa}}{1+\sqrt{\kappa}}\right)^{N-1} \|r(x_0)\|_2,& N\leq d, \\
        0,&  N > d.
    \end{cases}
    \]
    which control the convergence rate.
\end{theorem}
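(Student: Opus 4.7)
The plan is to translate the algorithm's output into polynomial algebra and reduce the theorem to the classical Chebyshev minimax problem on $[0,1-\kappa]$.

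First, by Proposition \ref{prop:poly_iter} each iterate $y_j$ satisfies $y_j - x^* = p_j(G)(x_0-x^*)$ for some $p_j \in \pip$ of degree exactly $j$ with $p_j(1)=1$. Since the $p_j$ have distinct degrees, they form a basis for polynomials of degree at most $N-1$, and any coefficient vector $c$ with $c^T\mathbf{1}=1$ is in bijection with polynomials $q_c := \sum_j c_j p_j$ of degree at most $N-1$ satisfying $q_c(1)=1$; the Chebyshev extremal polynomial used in step three and the construction used in the $N>d$ case both have degree exactly $N-1$, so one may legitimately optimise over $\pnm$.

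Second, I rewrite the extrapolation in terms of $q_c$. The $j$-th column of $R = X-Y$ is $g(y_j)-y_j = (G-I)p_j(G)(x_0-x^*)$, so $Rc = (G-I)q_c(G)(x_0-x^*)$ and, using that $(I-G)$ commutes with every polynomial in $G$, $\|Rc\|_2 = \|q_c(G)\,r(x_0)\|_2$. Invoking $c^T\mathbf{1}=1$,
\[
\yex - x^* \;=\; q_c(G)(x_0-x^*) - \beta(G-I)q_c(G)(x_0-x^*) \;=\; \bigl[I - \beta(G-I)\bigr]\,q_c(G)(x_0-x^*),
\]
and left-multiplying by $(I-G)$, which commutes with both factors, yields
\[
r(\yex) \;=\; \bigl[I - \beta(G-I)\bigr]\,q_c(G)\,r(x_0).
\]
Submultiplicativity gives $\|r(\yex)\|_2 \leq \|I-\beta(G-I)\|_2\,\|q_c(G)\,r(x_0)\|_2$. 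Setting $\lambda=0$ in Algorithm \ref{algo:rna} selects the $c^{(0)}$ minimising $\|Rc\|_2$ under $c^T\mathbf{1}=1$, which by the bijection is exactly $\min_{p\in\pnm}\|p(G)\,r(x_0)\|_2$, attained at $p^*_{N-1}$.

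Third, for $N\leq d$ I diagonalise the symmetric $G$ to reduce $\|p(G)r(x_0)\|_2$ to $\max_{\lambda\in[0,1-\kappa]}|p(\lambda)|\cdot\|r(x_0)\|_2$, and minimise the sup-norm over $\pnm$ by the classical shifted/scaled Chebyshev polynomial normalised at the external point $1$; unwinding $T_n(\cosh t)=\cosh(nt)$ with $\cosh^{-1}\bigl(\tfrac{1+\kappa}{1-\kappa}\bigr)=\log\bigl(\tfrac{1+\sqrt{\kappa}}{1-\sqrt{\kappa}}\bigr)$ produces the stated rate $\bigl(\tfrac{1-\sqrt{\kappa}}{1+\sqrt{\kappa}}\bigr)^{N-1}$. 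For $N>d$, the minimal polynomial $m$ of $G$ has degree at most $d<N$, and $m(1)\neq 0$ because $1\notin[0,1-\kappa]$; multiplying $m/m(1)$ by a polynomial of appropriate degree taking value $1$ at $1$ gives a $p\in\pnm$ with $p(G)=0$, so the minimum vanishes.

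The main obstacle is bookkeeping: ensuring that $c\mapsto q_c$ bijects the constraint set $\{c:c^T\mathbf{1}=1\}$ with $\pnm$ used by the Chebyshev bound, and that $(I-G)$ commutes cleanly with $q_c(G)$ and $I-\beta(G-I)$ so the residual factors as above. Once this is in place, the $N\leq d$ bound is a direct invocation of the Chebyshev minimax formula on $[0,1-\kappa]$, and the $N>d$ case is a minimal-polynomial argument.
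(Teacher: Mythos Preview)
Your proposal is correct and follows essentially the same route as the paper: both arguments invoke Proposition~\ref{prop:poly_iter} to write $y_j-x^*=p_j(G)(x_0-x^*)$, factor $\yex-x^*$ as $[I-\beta(G-I)]\,q_c(G)(x_0-x^*)$, identify $\|Rc\|$ with $\|q_c(G)r(x_0)\|$, convert the minimisation over $c$ into a minimisation over polynomials normalised at $1$, and then appeal to the shifted Chebyshev bound for $N\le d$ and the minimal-polynomial argument for $N>d$. Your handling of the degree-exactly-$(N{-}1)$ technicality (observing that the Chebyshev extremal and the minimal-polynomial construction both lie in $\pnm$, so optimising over that smaller set still yields a valid upper bound) is in fact slightly more explicit than the paper's treatment.
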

\begin{proof}
First, we write the definition of $\yex$ from Algorithm \ref{algo:rna} when $\lambda = 0$,
    \[
        \yex -x^* = (Y-\beta R) c - x^*.
    \]
    Since $c^T\textbf{1} = 1$, we have $X^*c = x^*$, where $X^* = [x^*,x^*,\ldots, x^*,]$. Thus,
    \[
        \yex -x^* = (Y-X^*-\beta R) c.
    \]
    Since $R = G(Y-X^*)$,
    \[
        \yex - x^* = (I-\beta (G-I))(Y-X^*) c .
    \]
    We have seen that the columns of $Y-X^*$ are polynomials of different degrees, whose coefficients sums to one (Proposition \ref{prop:poly_iter}). This means
    \[
        \yex - x^* = (I-\beta (G-I))\sum_{i=0}^{N-1} c_i p_i(G)(x_0-x^*). 
    \]
    In addition, its residual reads
    \[
        r(\yex) = (G-I)(\yex - x^*) = (G-I)(I-\beta (G-I))\sum_{i=0}^{N-1} p_i(G)(x_0-x^*) = (G-\beta I)\sum_{i=0}^{N-1} c_i p_i(G)r(x_0).
    \]
    Its norm is thus bounded by
    \[
        \|r(\yex)\| \leq \|I-\beta (G-I)\| \|\underbrace{\sum_{i=0}^{N-1} c_i p_i(G)r(x_0)}_{=Rc}\|.
    \]
    By definition of $c$ from Algorithm \ref{algo:rna},
    \[
        \|r(\yex)\| \leq \|I-\beta (G-I)\| \min_{c:c^T\textbf{1} = 1} \|\sum_{i=0}^{N-1} c_i p_i(G)r(x_0)\|.
    \]
    Because $p_i$ are all of degree exactly equal to $i$, the $p_i$ are a basis of the set of all polynomial of degree at most $N-1$. In addition, because $p_i(1) = 1$, restricting the sum of coefficients $c_i$ to $1$ generates the set $\pnm$. We have thus
    \[
        \|r(\yex)\| \leq \|I-\beta (G-I)\| \min_{p\in \pnm } \|p(G)r_0\|.
    \]
    This prove the first statement of the Theorem. The second statement is proved in \cite{golub1961chebyshev}, which shows that
    \[
        \min_{p\in \pnm } \max_{G: 0\preceq G \preceq 1-\kappa} \|p(G)\| \leq \left(\frac{1-\sqrt{\kappa}}{1+\sqrt{\kappa}}\right)^{N-1} .
    \]
    when $N\leq d$. Finally, when $N>d$, it suffice to take the minimal polynomial of the matrix $G$ named $p_{\min,G}$, whose coefficient are normalized by $p_{\min,G}(1)$. Since the eigenvalues of $G$ are strictly inferior to $1$, $p_{\min,G}(1)$ cannot be zero. 
\end{proof}

This rate is optimal according to \cite{nesterov2013introductory}, and is actually similar to bounds on Krylov methods (like GMRES or conjugate gradients \cite{golub1961chebyshev,golub2012matrix}) for quadratic minimization. In optimization, the quantity $\|r(\yex)\|_2$ is propotional to the norm of the gradient of the objective function computed at $\yex$.

\subsection{Online Acceleration}
We now discuss the convergence of online acceleration, coupling iterates in $g$ with the extrapolation Algorithm~\ref{algo:rna} at each iteration when $\lambda = 0$. Equivalently, all the following results also hold for Algorithm~\ref{algo:cna} when $\tau = \infty$. Consider the online acceleration strategy,
\BEA \label{eq:online_rna}
    x_{N} = g(y_{N-1}),\qquad y_N = \textbf{RNA}(X,Y,\lambda,\beta),
\EEA
where $\textbf{RNA}(X,Y,\lambda,\beta)=\yex$ with $\yex$ the output of Algorithm~\ref{algo:rna}. By construction, $\yex$ is written
\[
    \textstyle \yex = \sum_{i=1}^N \cl_i (y_{i-1} - \beta (x_i-y_{i-1})).
\]
If $\cl_N\neq 0$ then $\yex$ matches \eqref{eq:general_iteration}, thus online acceleration iterates in~\eqref{eq:online_rna} belong to the class of algorithms in~\eqref{eq:general_iteration}. If we can ensure $\cl_N \neq 0$, applying Theorem~\ref{thm:optimal_rate} recursively will then show an optimal rate of convergence for online acceleration iterations in~\eqref{eq:online_rna}. We show that under mild conditions this property is satisfied (see Proposition \ref{prop:online_accel_structure} in Appendix \ref{prop:online_accel_structure_proof}). 

Thus, with our modification of the original RNA method in \citep{scieur2016regularized} and with our theoretical result, we have that \textit{RNA can accelerate iterates coming from RNA}. In numerical experiments, we will see that this new approach significantly improves empirical performance.

\subsection{Constrained Chebyshev Polynomial}
The previous results consider the special cases where $\lambda = 0$ or $\tau = 0$,  which means that $\|c\|$ is unbounded. However, \citet{scieur2016regularized} show instability issues when $\|c\|$ is not controlled. Regularization is thus required in practice to make the method more robust to perturbations, even in the quadratic case (e.g., round-off errors). Unfortunately, this section will show that robustness comes at the cost of a potentially slower rate of convergence.

We first introduce \textit{constrained Chebyshev polynomials}. Earlier work in \citep{scieur2016regularized} considered regularized Chebyshev polynomials, but using a constrained formulation significantly simplifies the convergence analysis here. This polynomial plays an important role in Section~\ref{sec:convergence_rate_cna} in the convergence analysis.

\begin{definition}
The {Constrained Chebyshev Polynomial} $\chebpoly_N(x)$ of degree $N$ solves, for $\tau \geq 0$,
\BEQ
    \mbox{minimize} \max_{x\in [0,\kappa]} p(x)  \quad \text{s.t.}  ~  \|p\|_2 \leq {\textstyle \frac{1+\tau}{\sqrt{1+N}}} \label{eq:constrained_cheby} 
\EEQ
in the variable ${p\in \pnp}$. Its optimum value is written $\cheb_N \triangleq \max_{x\in [0,\kappa]} \chebpoly_N(x).$
\end{definition}

\subsection{Convergence Rate of CNA} \label{sec:convergence_rate_cna}
The previous section introduced constrained Chebyshev polynomials, which play an essential role in our convergence results when $g$ is nonlinear and/or iterates~\eqref{eq:general_iteration} are noisy. Instead of analyzing Algorithm \ref{algo:rna} directly, we focus on its constrained counterpart, Algorithm \ref{algo:cna}. 

\begin{proposition} \label{prop:rate_constrained}
Let $X$, $Y$ \eqref{eq:def_xy} be build using iterates from \eqref{eq:general_iteration} where $g$ is linear \eqref{eq:linear_g} and satisfies the assumptions of Theorem \eqref{thm:optimal_rate}. Then, the norm of the residual \eqref{eq:residual} of the extrapolation produced by Algorithm \ref{algo:cna} is bounded by
\BEQ
    \|r(\yex)\|_2 \leq \| I-\beta (G-I)  \|_2 \|r(x_0)  \|_2\; \cheb_{N-1},
\EEQ
where $\tau \geq 0$ and $\cheb_N$ is defined in \eqref{eq:constrained_cheby}.
\end{proposition}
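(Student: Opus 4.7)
The plan is to mirror the proof of Theorem \ref{thm:optimal_rate} up to the point where the residual is written as a polynomial in $G$ applied to $r(x_0)$, and then, instead of invoking the unconstrained Chebyshev bound, replace it by the constrained Chebyshev polynomial that matches the feasibility constraint carried by the CNA solution. Concretely, I would first run the algebraic reductions in the proof of Theorem \ref{thm:optimal_rate} verbatim: use $\sum c^{(\tau)}_i = 1$ to write $X^* c^{(\tau)} = x^*$, use $R = (G-I)(Y-X^*)$ to convert the difference into a residual matrix, and then invoke Proposition \ref{prop:poly_iter} to express every column of $Y-X^*$ as $p_{i-1}(G)(x_0-x^*)$ with $p_{i-1} \in \mathcal{P}_{[i-1]}^{(1)}$. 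This yields
\[
r(\yex) \;=\; -\bigl(I-\beta(G-I)\bigr)\,q(G)\,r(x_0), \qquad q(x) := \sum_{i=1}^{N} c^{(\tau)}_i \, p_{i-1}(x) \in \pnm,
\]
and hence $\|r(\yex)\|_2 \leq \|I-\beta(G-I)\|_2 \, \|Rc^{(\tau)}\|_2$.

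The second step exploits optimality: $c^{(\tau)}$ minimizes $\|Rc\|_2$ over $\{c : c^T\mathbf{1}=1,\ \|c\|_2 \leq (1+\tau)/\sqrt{N}\}$, so $\|Rc^{(\tau)}\|_2$ can be upper bounded by the value at any convenient feasible $c^\star$. The natural pick is the coefficient vector of the constrained Chebyshev polynomial $\chebpoly_{N-1}$ from \eqref{eq:constrained_cheby}: by construction it lies in $\pnm$, has $\|p\|_2 \leq (1+\tau)/\sqrt{N}$, and its maximum modulus on the spectrum of $G$ is at most $\cheb_{N-1}$. Combined with the spectral bound $\|p(G)\|_2 \leq \max_{\lambda \in \mathrm{spec}(G)} |p(\lambda)|$ for symmetric $G$ with spectrum inside the required interval, this gives
\[
\|Rc^{(\tau)}\|_2 \;\leq\; \|\chebpoly_{N-1}(G)\,r(x_0)\|_2 \;\leq\; \cheb_{N-1}\,\|r(x_0)\|_2,
\]
and combining with the previous display delivers the stated inequality.

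The main obstacle is the identification between $c$ and polynomial coefficients at the step where we certify feasibility of $c^\star$. The CNA constraint $\|c\|_2 \leq (1+\tau)/\sqrt{N}$ bounds the coordinates of $q$ in the data-driven basis $\{p_0,\dots,p_{N-1}\}$ produced by the iteration, whereas the definition of $\chebpoly_{N-1}$ controls the $\ell_2$ norm of polynomial coefficients in a fixed (monomial) basis. The two coincide tautologically for plain gradient descent ($p_i(x)=x^i$), but for a genuine multi-step scheme one has to argue that the change of basis from $\{p_i\}$ to monomials preserves the norm, or equivalently rescale the constraint. Modulo this identification, which I expect to be the delicate point and which is implicit in the statement of the proposition, the remainder of the proof is a direct chaining of the optimality of $c^{(\tau)}$ with the spectral estimate for $q(G)$.
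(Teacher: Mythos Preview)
Your approach is exactly the paper's: its entire proof reads ``The proof is similar to the one of Theorem \ref{thm:optimal_rate}. It suffices to use the constrained Chebyshev polynomial rather than the rescaled Chebyshev polynomial from \cite{golub1961chebyshev}.'' The basis-identification issue you flag is genuine and the paper does not resolve it either; implicitly, $\|p\|_2$ in \eqref{eq:constrained_cheby} has to be read as the $\ell_2$ norm of the coefficients in the basis $\{p_0,\dots,p_{N-1}\}$ produced by the underlying iteration (so that $\cheb_{N-1}$ tacitly depends on the algorithm), and with that reading your argument goes through verbatim.
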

\begin{proof}
    The proof is similar to the one of Theorem \ref{thm:optimal_rate}\ifisarxiv\else, in Appendix \ref{thm:optimal_rate_proof}\fi. It suffices to use the constrained Chebyshev polynomial rather than the rescaled Chebyshev polynomial from \cite{golub1961chebyshev}.
\end{proof}

Proposition \ref{prop:rate_constrained} with $\tau = \infty$ gives the same result than Theorem \ref{thm:optimal_rate}. However, smaller values of $\tau$ give weaker results as $\cheb_{N-1}$ increases. However, smaller values of $\tau$ also reduce the norm of coefficients $c^{(\tau)}$ \eqref{eq:ctau}, which makes the algorithm more robust to noise. 

Using the constrained algorithm in the context of non-perturbed linear function $g$ yields no theoretical benefit, but the bounds on the extrapolated coefficients simplify the analysis of perturbed non-linear optimization schemes as we will see below.

\section{Nonlinear Acceleration with Perturbations} \label{sec:rna_nonlinear}

In this section, we analyze the convergence rate of Algorithm \ref{algo:cna} for simplicity, but the results also hold for Algorithm \ref{algo:rna} (cf. Section \ref{sec:equivalence_algo}). We first introduce the concept of pertubed linear iteration, then we analyse the convergence rate of RNA in this setting.

\textbf{Perturbed Linear Iterations.}
Consider the following perturbed scheme,
\BEA \label{eq:perturbed_iteration_matrix}
    \tilde X_i = X^* + G(\tilde Y_{i-1}-X^*) + E_i, \qquad \tilde Y_i = [x_0,\tilde X_i] L_i,
\EEA
where $\tilde X_i$ and $\tilde Y_i$ are formed as in~\eqref{eq:def_xy} using the perturbed iterates $\tilde x_i$ and $\tilde y_i$, and $L_i$ is constructed using \eqref{eq:recurence_L}, and we write $E_i = [e_1,e_2,\ldots,e_i]$. For now, we do not assume anything on $e_i$ or $E_i$. This class contains many schemes such as gradient descent on nonlinear functions, stochastic gradient descent or even Nesterov's fast gradient with backtracking line search for example.

The notation \eqref{eq:perturbed_iteration_matrix} makes the analysis simpler than in \cite{scieur2016regularized,scieur2017nonlinear}, as we have the explicit form of the error over time. Consider the perturbation matrix $P_i$,
\BEQ
    P_i \triangleq \tilde R_i - R_i, \label{eq:perturbation_matrix}
\EEQ
Proposition \ref{prop:explicit_formula_perturbation} \ifisarxiv\else(in Appendix \ref{prop:explicit_formula_perturbation_proof})\fi shows that the magnitude of the perturbations $\|P_i\|$ is proportionnal to the noise matrix $\|E_i\|$, i.e., $\|P_i\| = O(\|E_i\|)$.
\ifisarxiv
\begin{proposition} \label{prop:explicit_formula_perturbation}
Let $P_i$ be defined in \eqref{eq:perturbation_matrix}, where $(\tilde X_i, \tilde Y_i)$ and $(\tilde X_i, \tilde Y_i)$ are formed respectively by \eqref{eq:general_iteration_matrix} and \eqref{eq:perturbed_iteration_matrix}. Let $\bar L_{j} = \| L_1\|_2  \| L_{2}\|_2 \ldots  \|L_j \|_2$. Then, we have the following bound
    \[
        \|P_i\| \leq \textstyle\|E_i\| (1+\sum_{j=1}^{i} (1-\kappa)^j \bar L_{j}).
    \]
\end{proposition}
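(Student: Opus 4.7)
The plan is to derive matrix recurrences for the differences $\Delta X_j = \tilde X_j - X_j$ and $\Delta Y_j = \tilde Y_j - Y_j$, and to unroll them, exploiting the spectral decay $\|G\|_2 \le 1-\kappa$ and the triangular coupling between $\Delta X_j$ and $\Delta Y_j$.

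First, subtracting \eqref{eq:general_iteration_matrix} from \eqref{eq:perturbed_iteration_matrix} I would obtain the two identities
\[
    \Delta X_j = G \Delta Y_j + E_j, \qquad \Delta Y_j = [0, \Delta X_j] L_j,
\]
the second one following because $x_0$ is shared between the exact and perturbed iterations, so its column in the difference vanishes. Combined with $R_i = X_i - Y_i = (G-I)(Y_i - X^*)$ and the analogous expression $\tilde R_i = (G-I)(\tilde Y_i - X^*) + E_i$, these yield the clean formula $P_i = (G-I)\Delta Y_i + E_i$. The assumption $0 \preceq G \preceq (1-\kappa) I$ gives $\|G-I\|_2 \le 1$, so $\|P_i\|_2 \le \|\Delta Y_i\|_2 + \|E_i\|_2$, reducing the proposition to bounding $\|\Delta Y_i\|_2$.

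The key observation for controlling $\|\Delta Y_i\|_2$ is that the apparent circularity between $\Delta X_j$ and $\Delta Y_j$ is really triangular in the column index: column $k$ of $\Delta Y_j$ equals $\Delta y_{k-1}$, and since $y_{k-1}$ is constructed out of $x_0,\ldots,x_{k-1}$ alone, it only involves $\Delta x_1, \ldots, \Delta x_{k-1}$. In particular the last column of $\Delta Y_j$ does not depend on $\Delta x_j$, so $\Delta Y_j$ factors as $[0, \Delta X_{j-1}] L_j'$ for a matrix $L_j'$ obtained from $L_j$ by zeroing the row corresponding to $\Delta x_j$, with $\|L_j'\|_2 \le \|L_j\|_2$. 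This yields the one-step recurrence
\[
    \|\Delta Y_j\|_2 \;\le\; \|L_j\|_2 \|\Delta X_{j-1}\|_2 \;\le\; \|L_j\|_2 \bigl( (1-\kappa)\|\Delta Y_{j-1}\|_2 + \|E_{j-1}\|_2 \bigr),
\]
where the second inequality uses the first matrix identity above applied at index $j-1$.

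Finally I would induct on $j$: starting from $\Delta Y_1 = 0$, each unfolding of the recurrence accumulates one additional factor of $(1-\kappa)\|L_m\|_2$, so that each surviving noise contribution picks up the telescoping product $\bar L_j$ together with the geometric factor $(1-\kappa)^j$. Using the monotonicity $\|E_k\|_2 \le \|E_i\|_2$ for every $k \le i$ (spectral norm is monotone under appending columns), I would factor this common term out of the resulting sum and add the separate $\|E_i\|_2$ coming from $\|P_i\|_2 \le \|\Delta Y_i\|_2 + \|E_i\|_2$, arriving at $\|P_i\|_2 \le \|E_i\|_2 \bigl( 1 + \sum_{j=1}^i (1-\kappa)^j \bar L_j \bigr)$. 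The main technical obstacle will be the careful index bookkeeping in the triangularity step: one must confirm, using the block structure \eqref{eq:recurence_L} of $L_j$, that replacing $\Delta X_j$ by $\Delta X_{j-1}$ in the second identity really does cost at most one extra factor $\|L_j\|_2$ and not a doubled count of a combination, so that the unrolled product matches $\bar L_j$ exactly.
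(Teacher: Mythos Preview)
Your approach is essentially the paper's: subtract the two iterations, derive a one-step linear recursion, bound with $\|G\|\le 1-\kappa$ and $\|G-I\|\le 1$, and unroll using $\|E_k\|\le\|E_i\|$. The only cosmetic difference is that the paper recurses directly on $P_i-E_i$ via the identity $P_i-E_i=G[0,\,P_{i-1}-E_{i-1}]L_{i-1}+(G-I)[0,\,E_{i-1}]L_{i-1}$ (which is your $\Delta Y$ recursion up to the harmless factor $G-I$, since $P_i-E_i=(G-I)\Delta Y$), and this packaging sidesteps your triangularity worry by recognizing the block $P_{i-1}$ inside the bracket rather than truncating $\Delta X_j$ to $\Delta X_{j-1}$.
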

\begin{proof}
    First, we start with the definition of $R$ and $\tilde R$. Indeed,
    \[
        \tilde R_i - R_i = \tilde X_i-X_i - (\tilde Y_{i-1} - Y_{i-1}). 
    \]
    By definition,
    \[
        \tilde X_i - X_i = G(\tilde Y_{i-1}-X^*) + X^* + E_i -  G( Y_{i-1}-X^*) - X^* = G(\tilde Y_{i-1}-Y_{i-1}) + E_i
    \]
    On the other side,
    \[
        \tilde Y_{i-1} - Y_{i-1} = [0;\tilde X_{i-1}-X_{i-1}]L_{i-1}
    \]
    We thus have
    \BEAS
        P_i & =  & \tilde X_i-X_i - (\tilde Y_{i-1} - Y_{i-1}),\\
        & = & G(\tilde Y_{i-1}-Y_{i-1}) + E_i - [0;\tilde X_{i-1}-X_{i-1}]L_{i-1},\\
        & = & G( [0;\tilde X_{i-1}-X_{i-1}]L_{i-1}) + E_i - [0;G(\tilde Y_{i-2}-Y_{i-2}) + E_{i-1}]L_{i-1},\\
        & = & G [0;P_{i-1}]L_{i-1} + E_i - [0;E_{i-1}]L_{i-1}.\\
    \EEAS
    Thus,
    \[
        P_i - E_i = G [0;P_{i-1}]L_{i-1} - [0;E_{i-1}]L_{i-1} = G [0;P_{i-1}-E_{i-1}]L_{i-1} + (G-I)[0;E_{i-1}]L_{i-1}.
    \]
    Finally, it suffice to expand
    \BEAS
        \|P_i - E_i\| = \| G \| \|P_{i-1}-E_{i-1}\|\|L_{i-1}\| + \|G-I\|\|E_{i-1}\|\|L_{i-1}\|,
    \EEAS
    and use $\|G-I\| \leq 1$ to have the desired result.
\end{proof}
\fi

\textbf{Convergence Analysis.} Now, we analyze how close the output of Algorithm \ref{algo:cna} is to $x^*$. To do so, we compare scheme \eqref{eq:perturbed_iteration_matrix} to its perturbation-free counterpart \eqref{eq:general_iteration_matrix}. Both schemes have the same starting point $x_0$ and ``fixed point''~$x^*$. It is important to note that scheme~\eqref{eq:perturbed_iteration_matrix} may not converge due to noise. The next theorem bounds the accuracy of the output of CNA\ifisarxiv\else (the proof can be found in App.~\ref{thm:convergence_perturbation_proof})\fi.
\begin{theorem} \label{thm:convergence_perturbation}
    Let $\yex$ be the output of Algorithm \eqref{algo:cna} applied on \eqref{eq:perturbed_iteration_matrix}. Its accuracy is bounded by
    \BEAS
        \|(G-I) \left(\yex - x^*\right)\|
        \leq \|I-\beta(G-I) \| \Big(\underbrace{ \cheb_{N-1} \|(G-I)(x_0-x^*)\|}_{\textbf{acceleration}}
        + \underbrace{\textstyle \frac{1+\tau}{\sqrt{N}} \big( \|P_N\| + \|E_N\|\big)}_{\textbf{stability}}\Big).
    \EEAS
\end{theorem}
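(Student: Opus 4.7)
The plan is to follow closely the proof of Proposition~\ref{prop:rate_constrained}, but carefully track how the perturbation matrices $E_N$ and $P_N$ propagate through the CNA extrapolation. First, I would unfold the CNA output $\yex = (\tilde Y - \beta \tilde R) c^{(\tau)}$ and use the equality constraint $\mathbf{1}^T c^{(\tau)} = 1$, which gives $X^\star c^{(\tau)} = x^\star$ and hence $\yex - x^\star = (\tilde Y - X^\star - \beta \tilde R) c^{(\tau)}$. The crucial algebraic identity provided by the perturbed iteration~\eqref{eq:perturbed_iteration_matrix} is $\tilde R = (G-I)(\tilde Y - X^\star) + E_N$. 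Applying $(G-I)$ to the previous expression and substituting yields the clean decomposition
\[
(G-I)(\yex - x^\star) \;=\; \bigl(I-\beta(G-I)\bigr)\,\tilde R\, c^{(\tau)} \;-\; E_N\, c^{(\tau)},
\]
whose norm is controlled by the triangle inequality as $\|I-\beta(G-I)\|\,\|\tilde R c^{(\tau)}\| + \|E_N c^{(\tau)}\|$.

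Next, I would bound $\|\tilde R c^{(\tau)}\|$ using the variational definition of $c^{(\tau)}$: for any feasible comparison $c^\star$ (meaning $\mathbf{1}^T c^\star = 1$ and $\|c^\star\| \leq (1+\tau)/\sqrt{N}$), optimality gives $\|\tilde R c^{(\tau)}\| \leq \|\tilde R c^\star\| \leq \|R c^\star\| + \|P_N c^\star\|$. I would then choose $c^\star$ to be the coefficient vector expressing the constrained Chebyshev polynomial $\chebpoly_{N-1}$ in the algorithm-dependent basis $\{p_i\}_{i=0}^{N-1}$ of Proposition~\ref{prop:poly_iter}. This choice gives $\|R c^\star\| \leq \cheb_{N-1}\,\|(G-I)(x_0-x^\star)\|$ exactly as in the proof of Proposition~\ref{prop:rate_constrained}, while the remaining terms are bounded by $\|P_N c^\star\| \leq \|P_N\|(1+\tau)/\sqrt N$ and $\|E_N c^{(\tau)}\| \leq \|E_N\|(1+\tau)/\sqrt N$ thanks to the feasibility constraint on both vectors. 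Combining everything reproduces the theorem, up to harmlessly absorbing the standalone $\|E_N c^{(\tau)}\|$ term into the common $\|I-\beta(G-I)\|$ prefactor (a mild overestimate whenever this operator norm is at least one, as is typical for the mixing parameters used in practice).

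The main obstacle I anticipate is making rigorous the feasibility of the comparison point $c^\star$: one must verify that the vector of coordinates of $\chebpoly_{N-1}$ in the algorithm-dependent basis $\{p_i\}$ really satisfies the CNA norm constraint $\|c^\star\|\leq (1+\tau)/\sqrt N$. The normalization $(1+\tau)/\sqrt{1+N}$ in the definition of $\chebpoly_N$ is tailored precisely so that for the degree-$(N-1)$ polynomial $\chebpoly_{N-1}$ the bound becomes $(1+\tau)/\sqrt N$, matching the feasible set of Algorithm~\ref{algo:cna}. Once this identification is justified, the rest of the argument is a routine perturbation bound layered on top of Proposition~\ref{prop:rate_constrained}, with the acceleration term inherited from the Chebyshev estimate and the stability term arising from the noise magnitudes $\|P_N\|$ and $\|E_N\|$ weighted by the size of the feasible extrapolation coefficients.
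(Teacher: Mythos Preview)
Your proposal is essentially the paper's own proof. The paper derives exactly the same decomposition $(G-I)(\yex-x^\star)=(I-\beta(G-I))\tilde R c^{(\tau)}-E_N c^{(\tau)}$, then bounds $\|\tilde R c^{(\tau)}\|$ by comparing against a feasible $c$, splitting $\tilde R=R+P_N$, and invoking the constrained Chebyshev bound of Proposition~\ref{prop:rate_constrained}; the two remarks you flag---the basis-dependence issue in checking that the constrained Chebyshev coefficient vector is feasible for~\eqref{eq:ctau}, and the fact that the standalone $\|E_N c^{(\tau)}\|$ term must be absorbed into the $\|I-\beta(G-I)\|$ prefactor to match the stated bound---are glossed over in the paper just as you anticipated.
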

\ifisarxiv
\begin{proof}
We start with the following expression for arbitrary coefficients $c$ that sum to one,
\[
    (G-I) \left((\tilde Y - \beta \tilde R)c - x^*\right).
\]
Since
\[
    \tilde R = \tilde X - \tilde Y = (G-I)(\tilde Y - X^*) + E,
\]
we have
\[
    (G-I)(\tilde Y-X^*)  = (\tilde R-E).
\]
So,
\[
    (G-I) (\tilde Y-X^* - \beta \tilde R)c = (\tilde R-E)c - \beta (G-I)\tilde Rc .
\]
After rearranging the terms we get
\BEQ
(G-I) \left((\tilde Y - \beta \tilde R)c - x^*\right) = (I-\beta(G-I))\tilde Rc - E c.\label{eq:decomposition_error}
\EEQ
We bound \eqref{eq:decomposition_error} as follow, using coefficients from \eqref{eq:ctau},
\[
    \|I-\beta(G-I)\| \|\tilde Rc^{(\tau)}\| + \|E\| \|c^{(\tau)}\|.
\]
Indeed,
\BEAS
    \|\tilde Rc^{(\tau)}\|^2 & = & \min_{c: c^T \textbf{1} = 1,\; \|c\| \leq \frac{1+\tau}{\sqrt{N}}}  \|\tilde Rc\|^2.
\EEAS
We have
\BEAS
    \min_{c:\textbf{1}^Tc=1,\; \|c\| \leq \frac{1+\tau}{\sqrt{N}}} \|\tilde Rc\|_2,  & \leq & \min_{c:\textbf{1}^Tc=1\; \|c\| \leq \frac{1+\tau}{\sqrt{N}}} \|Rc\|_2 + \|P_Rc\|_2,\\
    & \leq & \left(\min_{c:\textbf{1}^Tc=1\; \|c\| \leq \frac{1+\tau}{\sqrt{N}}} \|Rc\|_2\right) + \|P_R\|_2\frac{1+\tau}{\sqrt{N}} ,\\
    & \leq & \cheb_{N-1} \|r(x_0)\| +  \frac{\|P_R\|(1+\tau)}{\sqrt{N}}.
\EEAS
This prove the desired result.
\end{proof} 
\fi

This theorem shows that Algorithm \ref{algo:cna} balances acceleration and robustness. The result bounds the accuracy by the sum of an \textit{acceleration term} bounded using constrained Chebyshev polynomials, and a \textit{stability} term proportional to the norm of perturbations. In the next section, we consider the particular case where $g$ corresponds to a gradient step when the perturbations are Gaussian or due to non-linearities.

\section{Convergence Analysis for First-Order Methods}
\label{sec:convergence_with_gd}

We now apply our results when $g$ in \eqref{eq:general_iteration_matrix} corresponds to the gradient step
\BEQ
    g(x) = x-h\nabla f(x), \label{eq:gradient_step_g}
\EEQ
where $f$ is the objective function and $h$ a step size. We assume the function $f$ twice differentiable, $L$-smooth and $\mu$-strongly convex. This means
\BEQ
    \mu I \leq \nabla^2 f(x) \leq LI. \label{eq:smooth_strong_convex}
\EEQ
Also, we assume $h = \frac{1}{L}$ for simplicity. In the next sections, we study two different cases. First, we assume the objective quadratic, but \eqref{eq:gradient_step_g} is corrupted by Gaussian noise. Then, we consider a general nonlinear function $f$, with the additional assumption that its Hessian is Lipchitz-continuous.

\subsection{Random Perturbations}

We perform a gradient step on the quadratic form
\[
    f(x) = \frac{1}{2}(x-x^*) A (x-x^*), \;\; \mu I \preceq A \preceq L I.
\]
The gradient step \eqref{eq:gradient_step_g} thus reads
\[
    g(x) = x-\frac{1}{L} A(x-x^*) = (I-A/L)(x-x^*) + x^*.
\]
Given \eqref{eq:linear_g}, we identify the matrix $G = I-A/L$, as well as $\kappa = \frac{\mu}{L}$. However, each iteration \eqref{eq:perturbed_iteration_matrix} is corrupted by $e_i$, where $e_i$ is a stochastic noise with bounded variance $\sigma^2$.

The next proposition is the application of Theorem \ref{thm:convergence_perturbation} to our setting. To simplify results, we consider $\beta = 1$.
\begin{proposition} \label{prop:convergence_stoch_gradient}
Assume we use Algorithm \eqref{algo:cna} with \mbox{$\beta = 1$} on $N$ iterates from \eqref{eq:perturbed_iteration_matrix}, where $g$ is the gradient step \eqref{eq:gradient_step_g} and $e_i$ are zero-mean independent random noise with variance bounded by $\sigma^2$. Then,
\BEQ
    \mathbb{E}[\|\nabla f(\yex)\|] \leq (1-\kappa) \;\cheb_{N-1}\|\nabla  f(x_0)\| + \mathcal{E} ,
\EEQ
where
\[
    \mathcal{E} \leq  (1-\kappa)\frac{1+\tau}{\sqrt{N}} L\sigma \sum_{j=1}^{N} (1-\kappa)^j \bar L_{j}.
\]
In the simple case where we accelerate the gradient descent algorithm, all $L_i = I$ and thus
\[
    \textstyle \mathcal{E} \leq  \frac{1+\tau}{\sqrt{N}} \frac{L\sigma}{\kappa}. 
\]
\end{proposition}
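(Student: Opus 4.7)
The plan is to specialize Theorem~\ref{thm:convergence_perturbation} to the quadratic stochastic-gradient setting, and then handle the randomness with a standard expectation bound on the noise matrix. First, I would match the setup to the general perturbed linear iteration~\eqref{eq:perturbed_iteration_matrix}: the gradient step on $f(x)=\tfrac12(x-x^*)^\top A(x-x^*)$ reads $g(x) = (I-A/L)(x-x^*)+x^*$, so it is linear with $G=I-A/L$, whose spectrum lies in $[0,1-\kappa]$ for $\kappa=\mu/L$. Since the residual is $r(x)=x-g(x)=(1/L)\nabla f(x)$, one has the key dictionary $\|\nabla f(x)\| = L\,\|(G-I)(x-x^*)\|$, which translates every bound of Theorem~\ref{thm:convergence_perturbation} into a statement about gradient norms.

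Second, I would invoke Theorem~\ref{thm:convergence_perturbation} directly with $\beta=1$, multiply through by $L$, and control the prefactor $\|I-\beta(G-I)\|$ via the spectrum of $G$, yielding the $(1-\kappa)$ factor in front of the Chebyshev term. The perturbation norm $\|P_N\|$ is then dispatched by Proposition~\ref{prop:explicit_formula_perturbation}, which provides $\|P_N\|\le \|E_N\|\bigl(1+\sum_{j=1}^{N}(1-\kappa)^j\bar L_j\bigr)$; the surviving geometric powers of $(1-\kappa)$ are precisely the ones that appear inside $\mathcal{E}$.

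Third, I would pass to expectations. Using the Frobenius majorization $\|E_N\|_2\le\|E_N\|_F=\bigl(\sum_i\|e_i\|^2\bigr)^{1/2}$ together with Jensen's inequality (concavity of $\sqrt{\cdot}$) and the variance assumption $\mathbb{E}\|e_i\|^2\le\sigma^2$, one gets $\mathbb{E}\|E_N\|_2\le\sigma\sqrt{N}$. Substituting into the deterministic bound, the factor $\tfrac{1+\tau}{\sqrt N}\cdot\sigma\sqrt N$ collapses to $(1+\tau)\sigma$, and after absorbing the $L$ coming from the residual-to-gradient dictionary, the stated form of $\mathcal{E}$ emerges. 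For the $L_i=I$ specialization, $\bar L_j = 1$ and the geometric series $\sum_{j=1}^N(1-\kappa)^j\le 1/\kappa$ yields the clean closed-form $\mathcal{E}\le\tfrac{1+\tau}{\sqrt N}\cdot\tfrac{L\sigma}{\kappa}$, after using $(1-\kappa)\le 1$.

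The main obstacle is bookkeeping rather than depth: one must track the factor $L$ carefully (it comes from the residual-to-gradient conversion and must end up multiplying $\sigma$ rather than dividing it), make sure the phrase ``variance bounded by $\sigma^2$'' is being used in the $\mathbb{E}\|e_i\|^2\le\sigma^2$ sense so that Jensen applies to $\sqrt{\sum_i\|e_i\|^2}$, and verify that the advertised $(1-\kappa)$ prefactor indeed results from combining the spectral bound on $\|I-\beta(G-I)\|$ with the outer $(G-I)$ factor in the residual. All the heavy lifting is already packaged in Theorem~\ref{thm:convergence_perturbation} and Proposition~\ref{prop:explicit_formula_perturbation}, so the proof reduces to a specialization, one perturbation bound, and a single Jensen inequality.
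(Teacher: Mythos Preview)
Your plan mirrors the paper's proof almost step for step: specialize Theorem~\ref{thm:convergence_perturbation} with $\beta=1$ and $G=I-A/L$, translate $(G-I)(x-x^*)$ into $\tfrac1L\nabla f(x)$, bound $\|I-\beta(G-I)\|=\|G\|\le 1-\kappa$, control $\|P_N\|$ via Proposition~\ref{prop:explicit_formula_perturbation}, and finish the $L_i=I$ case with the geometric series $\sum_{j\ge 1}(1-\kappa)^j\le 1/\kappa$.

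There is, however, one genuine inconsistency in your write-up around the $\sqrt N$ factor. Your Frobenius/Jensen step gives $\mathbb{E}\|E_N\|_2\le \sigma\sqrt N$, and you correctly note that $\tfrac{1+\tau}{\sqrt N}\cdot\sigma\sqrt N=(1+\tau)\sigma$: the $\sqrt N$ cancels. But then you claim this ``is the stated form of $\mathcal{E}$'' and, in the $L_i=I$ case, write $\mathcal{E}\le \tfrac{1+\tau}{\sqrt N}\cdot\tfrac{L\sigma}{\kappa}$ \emph{with} the $1/\sqrt N$ back in. These two cannot both be right: your Frobenius bound yields
\[
\mathcal{E}\;\le\;(1-\kappa)(1+\tau)\,L\sigma\!\sum_{j=1}^{N}(1-\kappa)^j\bar L_j,
\]
i.e.\ the statement \emph{without} the $1/\sqrt N$. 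The paper obtains the extra $1/\sqrt N$ by asserting $\mathbb{E}\|E\|\le\sigma$ (not $\sigma\sqrt N$); this is what you would need to recover the proposition exactly as written. If you want to match the stated bound, replace the Frobenius majorization by a sharper operator-norm estimate (e.g.\ a matrix concentration argument under a Gaussian-type assumption, which is what the paper's proof tacitly invokes); otherwise, be explicit that your argument proves the bound up to a factor $\sqrt N$ in the stability term.
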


\begin{proof}
    Since $\beta = 1$,
    \[
        \|I-\beta(G-I)\| = \|G\| \leq 1-\kappa.
    \]
    Now, consider $\mathbb{E}[\|E\|]$. Because each $e_i$ are independents Gaussian noise with variance bounded by $\sigma$, we have,
    \[
        \mathbb{E}[\|E\|] \leq \sqrt{\mathbb{E}[\|E\|^2]} \leq \sigma.
    \]
    Similarly, for $P$ \eqref{eq:perturbation_matrix}, we use Proposition \eqref{prop:explicit_formula_perturbation} and we have
    \BEAS
        \mathbb{E}[\|P\|] & \leq & \textstyle \mathbb{E}[\|E_i\|] \left(1+\sum_{j=1}^{i} (1-\kappa)^j \bar L_{j}\right)\\
        & \leq & \textstyle  \sigma  \left(1+\sum_{j=1}^{i} (1-\kappa)^j \bar L_{j}\right)
    \EEAS
    Thus, $\mathcal{E}_{N}^{\kappa,\tau}$ in Theorem \ref{thm:convergence_perturbation} becomes
    \[
        \mathcal{E}_{N}^{\kappa,\tau} \leq \textstyle  \frac{\sigma(1+\tau)}{\sqrt{N}}  \left(2+\sum_{j=1}^{N} (1-\kappa)^j \bar L_{j}\right)
    \]
    Finally, it suffice to see that
    \[
        (G-I)(x-x^*)+x^* = (A/L)(x-x^*)+x^* = \frac{1}{L} \nabla f(x),
    \]
    and we get the desired result. In the special case of plain gradient method, $L_i = I$ so $\bar L_i = 1$. Then, we bound
    \[
        \textstyle \sum_{j=1}^{N} (1-\kappa)^j  \leq \sum_{j=1}^{\infty} (1-\kappa)^j \leq \frac{1}{\kappa}.
    \]
\end{proof}

This proposition also applies to gradient descent with momentum or with our online acceleration algorithm \eqref{eq:online_rna}.

With this proposition, we can distinguish two different regimes when accelerating gradient descent with noise. One when $\sigma$ is small compared to $\|f(x_0)\|$, and one when $\sigma$ is large. In the first case, the acceleration term dominates. In this case, Algorithm \ref{algo:cna} with large $\tau$ produces output $\yex$ that converges with a near-optimal rate of convergence. In the second regime where the noise dominates, $\tau$ should be close to zero. In this case, using our extrapolation method when perturbation are high naturally gives the simple averaging scheme. We can thus see Algorithm \eqref{algo:cna} as a way to interpolate optimal acceleration with averaging.

\subsection{Nonlinear Perturbations}
Here, we study the general case where the perturbation $e_i$ are bounded by a function of $D$, where $D$ satisfies
\BEQ
    \| \tilde y_i - x^* \|_2 \leq D \qquad \forall i. \label{eq:def_d}
\EEQ
This assumption is usually met when we accelerate non-divergent algorithms. More precisely, we assume the perturbation are bounded by
\BEQ
    \big(\|I-\beta(G-I) \| \|P_N\| + \|E\|\big) \leq \gamma\sqrt{N} D^\alpha. \label{nonlinear_perturbation}
\EEQ
where $\gamma$ and $\alpha$ are scalar. This is typically the case when
\[
    \|e_i\| \leq O(D^\alpha).
\]
We call these perturbations "nonlinear" because the error term typically corresponds to the difference between $g$ and its linearization around $x^*$. For example, the optimization of smooth non-quadratic functions with gradient descent can be described using \eqref{nonlinear_perturbation} with $\alpha = 1$ or $\alpha = 2$, as shown in Section \ref{sec:smooth_functions}. The next proposition bounds the accuracy of the extrapolation produced by Algorithm \eqref{algo:cna} in the presence of such perturbation.
\begin{proposition} \label{prop:conv_nonlinear}
    
    Consider Algorithm \eqref{algo:cna} with $\beta = 1$ on $N$ iterates from \eqref{eq:perturbed_iteration_matrix}, where perturbations satisfy \eqref{eq:def_d}. Then,
    \BEAS
        \textstyle \left\|(G-I)(\yex-x^*)\right\|
        & \leq &  (1-\kappa)\Big(  \cheb_{N-1}\left\|(G-I)(x_0-x^*)\right\| + \mathcal{E}\Big)
    \EEAS
    where $\mathcal{E} \leq (1+\tau)\gamma D^\alpha$.
\end{proposition}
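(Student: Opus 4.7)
The plan is to obtain this statement as a direct corollary of Theorem \ref{thm:convergence_perturbation}, specialized by setting $\beta=1$ and then collapsing the stability term using the structural assumption \eqref{nonlinear_perturbation}.

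First I would instantiate Theorem \ref{thm:convergence_perturbation} on the iterates $(\tilde X_i,\tilde Y_i)$ of the perturbed scheme \eqref{eq:perturbed_iteration_matrix}. The theorem already gives a clean split of $\|(G-I)(\yex-x^*)\|$ into an acceleration piece $\cheb_{N-1}\|(G-I)(x_0-x^*)\|$ and a stability piece $\frac{1+\tau}{\sqrt{N}}(\|P_N\|+\|E_N\|)$, with $\|I-\beta(G-I)\|$ acting as the outer multiplicative factor. This is already the shape required by the conclusion, so the rest of the work is only to simplify each factor under the present hypotheses.

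Next I would bound the prefactor $\|I-\beta(G-I)\|$ for $\beta=1$. Because the spectral condition $0\preceq G\preceq (1-\kappa)I$ is still in force (it is inherited through the linear model used to define the perturbed scheme, cf.\ Theorem \ref{thm:optimal_rate}), the same simplification performed in the proof of Proposition \ref{prop:convergence_stoch_gradient} applies here verbatim and yields $\|I-\beta(G-I)\|\leq 1-\kappa$. Pulling this scalar in front reproduces the overall $(1-\kappa)$ factor appearing in the statement.

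For the stability term, I would invoke \eqref{nonlinear_perturbation}, which is phrased precisely to control the combination $\|I-\beta(G-I)\|\|P_N\|+\|E_N\|\leq \gamma\sqrt{N}D^{\alpha}$ emerging from the proof of Theorem \ref{thm:convergence_perturbation}. Multiplying by the coefficient norm bound $\|c^{(\tau)}\|\leq (1+\tau)/\sqrt{N}$ from \eqref{eq:ctau} cancels the $\sqrt{N}$ factor and produces $\mathcal{E}\leq (1+\tau)\gamma D^{\alpha}$. Substituting this into the bound of Theorem \ref{thm:convergence_perturbation} and collecting terms gives the claim.

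There is no real technical obstacle; the only subtlety is bookkeeping. The statement of Theorem \ref{thm:convergence_perturbation} groups $(\|P_N\|+\|E_N\|)$ under a single $\|I-\beta(G-I)\|$ prefactor, whereas its proof (and the assumption \eqref{nonlinear_perturbation}) distinguish the $\|P_N\|$ and $\|E_N\|$ contributions. I would align these two expressions by observing that $\|I-\beta(G-I)\|\leq 1$ so the perturbation matching is consistent, which then makes the application of \eqref{nonlinear_perturbation} straightforward and completes the proof.
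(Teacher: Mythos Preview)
Your proposal is correct and follows exactly the paper's approach: the paper's proof is the single sentence ``Combine Theorem \ref{thm:convergence_perturbation} with assumption \eqref{nonlinear_perturbation},'' and you have simply spelled out that combination, including the $\beta=1$ simplification $\|I-\beta(G-I)\|=\|G\|\le 1-\kappa$ already used verbatim in the proof of Proposition \ref{prop:convergence_stoch_gradient}. The bookkeeping mismatch you flag between the grouped form $\|I-\beta(G-I)\|(\|P_N\|+\|E_N\|)$ in the theorem statement and the separated form in \eqref{nonlinear_perturbation} is real but is left implicit in the paper as well, so your treatment is at least as careful as the original.
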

\begin{proof}
    Combine Theorem \ref{thm:convergence_perturbation} with assumption \eqref{nonlinear_perturbation}.
\end{proof}

Here, $\|x_0-x^*\|$ is of the order of $D$. This bound is generic as does not consider any strong structural assumption on $g$, only that its first-order approximation error is bounded by a power of $D$. We did not even assume that scheme \eqref{eq:perturbed_iteration_matrix} converges. This explains why Proposition \ref{prop:conv_nonlinear} does not necessary give a convergent bound. Nevertheless, in the case of convergent scheme, Algorithm \ref{algo:cna} with $\tau = 0$ output the average of previous iterates, that also converge to $x^*$.

However, Proposition \ref{prop:conv_nonlinear} is interesting when perturbations are small compared to $\|x_0-x^*\|$. In particular, it is possible to link $\tau$ and $D^\alpha$ so that Algorithm \ref{algo:cna} asymptotically reach an optimal rate of convergence, when $D\rightarrow 0$.

\begin{proposition}\label{prop:asymptotic_optimal_rate}
    If $\tau = O(D^{-s})$ with $0<s<\alpha-1$, then, when $D\rightarrow 0$, Proposition \ref{prop:conv_nonlinear} becomes
    \BEAS
        \textstyle \left\|(G-I)(\yex-x^*)\right\|
        & \leq &  (1-\kappa)  \left(\frac{1-\sqrt{\kappa}}{1+\sqrt{\kappa}}\right)^{N-1}\left\|(G-I)(x_0-x^*)\right\|
    \EEAS
    The same result holds with Algorithm \ref{algo:rna} if $\lambda = O(D^r)$ with $0<r<2(\alpha-1)$.
\end{proposition}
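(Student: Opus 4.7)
The plan is to start from Proposition~\ref{prop:conv_nonlinear}, which already gives the decomposition
\[
\|(G-I)(\yex - x^*)\| \leq (1-\kappa)\bigl(\cheb_{N-1}\,\|(G-I)(x_0-x^*)\| + \mathcal{E}\bigr), \qquad \mathcal{E} \leq (1+\tau)\gamma D^\alpha,
\]
and to track how each of the two terms scales as $D\to 0$ under the prescribed choice $\tau = O(D^{-s})$. The argument has two parts: (i) show that the acceleration term $\cheb_{N-1}$ converges to the classical Chebyshev rate $\bigl(\tfrac{1-\sqrt{\kappa}}{1+\sqrt{\kappa}}\bigr)^{N-1}$ once $\tau$ is large enough; (ii) show that the stability term $\mathcal{E}$ is dominated by the acceleration term.

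For (i), I would use the definition~\eqref{eq:constrained_cheby}. The constraint $\|p\|_2\leq (1+\tau)/\sqrt{N}$ becomes inactive in the limit $\tau\to\infty$, so $\cheb_{N-1}$ converges to the unconstrained minimax value $\min_{p\in \pnm}\max_{x\in[0,\kappa]}|p(x)|$, which equals $\bigl(\tfrac{1-\sqrt{\kappa}}{1+\sqrt{\kappa}}\bigr)^{N-1}$ by the classical estimate of~\cite{golub1961chebyshev}. Moreover, the optimal (rescaled) Chebyshev polynomial has coefficients of $L^2$-norm bounded by a constant $C(\kappa,N)$, so there is a threshold $\tau_0(\kappa,N)$ beyond which the constraint is slack and $\cheb_{N-1}$ equals the Chebyshev bound exactly. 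Since $\tau = O(D^{-s})$ with $s>0$, for $D$ small enough one has $\tau\geq \tau_0$, proving the acceleration term achieves the optimal rate.

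For (ii), the stability term satisfies $\mathcal{E} = O((1+\tau)D^\alpha) = O(D^{\alpha-s})$, while the baseline $\|(G-I)(x_0-x^*)\|$ is of order $D$ (using $\|G-I\|\leq 1$ and the bound~\eqref{eq:def_d}). Thus the ratio $\mathcal{E}/\|(G-I)(x_0-x^*)\|$ is $O(D^{\alpha-s-1})$, which vanishes as $D\to 0$ exactly when $s < \alpha-1$. Combining with (i) yields the claimed asymptotic optimal rate.

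For the RNA variant with $\lambda = O(D^r)$, I would appeal to the equivalence between Algorithms~\ref{algo:rna} and~\ref{algo:cna} discussed in Section~\ref{sec:equivalence_algo}. By~\eqref{eq:bound_norm_lambda}, $\|c^\lambda\|_2 \leq \tfrac{1}{\sqrt{N}}\sqrt{1+1/\lambda}$, so $c^\lambda$ is admissible for the constrained problem with effective parameter $1+\tau = \sqrt{1+1/\lambda}$, giving $\tau = O(\lambda^{-1/2}) = O(D^{-r/2})$. Substituting $s=r/2$ into the condition $s<\alpha-1$ gives $r<2(\alpha-1)$, proving the second statement. The only non-routine point in the whole plan is step (i): making precise the fact that the classical Chebyshev polynomial's $L^2$-norm is bounded by a constant independent of $D$, so that the constraint in~\eqref{eq:constrained_cheby} is indeed inactive for large $\tau$; everything else is bookkeeping on the scalings.
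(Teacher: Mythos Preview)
Your proposal is correct and follows essentially the same approach as the paper: both start from Proposition~\ref{prop:conv_nonlinear}, let $\tau\to\infty$ so that $\cheb_{N-1}$ reduces to the unconstrained Chebyshev value, and require $(1+\tau)D^{\alpha}=o(D)$, which yields the range $0<s<\alpha-1$; the RNA case is handled identically via~\eqref{eq:bound_norm_lambda} and the substitution $1+\tau=\sqrt{1+1/\lambda}$. If anything, your write-up is slightly more careful than the paper's, since you make explicit the reason the constraint in~\eqref{eq:constrained_cheby} becomes inactive (the optimal rescaled Chebyshev polynomial has $L^2$-norm bounded by a constant depending only on $\kappa,N$), whereas the paper simply asserts that ``CNA becomes unconstrained'' as $\tau\to\infty$.
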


\begin{proof}
By assumption, 
\[
    \|x_0 - x^*\| = O(D).
\]
We thus have, by Proposition \ref{prop:conv_nonlinear}
    \BEAS
        \textstyle \left\|(G-I)(\yex-x^*)\right\| 
        & \leq &  (1-\kappa)\Big(  \cheb_{N-1}O(D) + (1+\tau)O(D^{\alpha})\Big).
    \EEAS
    $\tau$ will be a function of $D$, in particular $\tau = D^{-s}$. We want to have the following conditions,
    \[
        \lim\limits_{D\rightarrow 0} (1+\tau(D))D^{\alpha-1} = 0, \qquad \lim\limits_{D\rightarrow 0} \tau = \inf.
    \]
    The first condition ensures that the perturbation converge faster to zero than the acceleration term. The second condition ask $\tau$ to grow as $D$ decreases, so that CNA becomes unconstrained. Since $\tau = D^{-s}$, we have to solve
    \[
        \lim\limits_{D\rightarrow 0} D^{\alpha-1} + D^{\alpha-s-1} = 0, \qquad \lim\limits_{D\rightarrow 0} D^{-s} = \inf.
    \]
    Clearly, $0 < s < \alpha-1$ satisfies the two conditions. For the second result, by using \eqref{eq:bound_norm_lambda},
    \[
        \|c^{\lambda}\|_2 \leq \frac{1}{\sqrt{N}}\sqrt{1+\frac{1}{\lambda}}.
    \]
    Setting 
    \[
        \frac{1+\tau}{\sqrt{N}} = \frac{1}{\sqrt{N}}\sqrt{1+\frac{1}{\lambda}}
    \]
    with $\tau = D^{-s}$ gives the desired result.
\end{proof}

This proposition shows that, when perturbations are of the order of $D^\alpha$ with $\alpha > 1$, then our extrapolation algorithm converges optimally once the $\tilde y_i$ are close to the solution $x^*$. The next section shows this holds, for example, when minimizing function with smooth gradients.

\subsubsection{Optimization of Smooth Functions} \label{sec:smooth_functions}
Let the objective function $f$ be a nonlinear function that follows \eqref{eq:smooth_strong_convex}, which also has a Lipchitz-continuous Hessian with constant $M$,
\BEQ \label{eq:smooth_gradient}
    \|\nabla^2 f(y)-\nabla^2 f(x)\| \leq M\|y-x\|.
\EEQ
This assumption is common in the convergence analysis of second-order methods.

For the convergence analysis, we consider that $g(x)$ perform a gradient step on the quadratic function
\BEQ
    \frac{1}{2}(x-x^*)\nabla^2 f(x^*)(x-x^*). \label{eq:gradient_approx}
\EEQ
This is the quadratic approximation of $f$ around $x^*$. The gradient step thus reads, if we set $h=1/L$,
\BEQ
    g(x) = \left(I-\frac{\nabla^2 f(x^*)}{L}\right)(x-x^*)+x^*. \label{eq:gradient_step_linearized}
\EEQ

The perturbed scheme corresponds to the application of \eqref{eq:gradient_step_linearized} with a specific nonlinear perturbation,
\BEQ
    \textstyle \tilde x_{i+1} = g(\tilde y_i) - \underbrace{ \textstyle \frac{1}{L}(\nabla f(\tilde y_i)-\nabla^2 f(x^*)(\tilde y_i-x^*))}_{=e_i}. \label{eq:nonlinear_perturbation}
\EEQ
This way, we recover the gradient step on the non-quadratic function $f$. The next Proposition shows that schemes \eqref{eq:nonlinear_perturbation} satisfies \eqref{eq:def_d} with $\alpha = 1$ when $D$ is big, or $\alpha=2$ when $D$ is small.
\begin{proposition}\label{eq:bound_function_smooth_gradient}
    Consider the scheme \eqref{eq:nonlinear_perturbation}, where $f$ satisfies \eqref{eq:smooth_gradient}. If $\|y_i-x^*\| \leq D$, then we satisfy $\eqref{eq:def_d}$ with $\alpha = 1$ for large $D$ or $\alpha = 2$ for small $D$.
\end{proposition}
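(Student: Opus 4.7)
The plan is to bound the nonlinear perturbation $e_i = \tfrac{1}{L}(\nabla f(\tilde y_i) - \nabla^2 f(x^*)(\tilde y_i - x^*))$ pointwise in two different ways, then cascade the bound on $e_i$ to $\|E_N\|$ and $\|P_N\|$ via Proposition \ref{prop:explicit_formula_perturbation} so as to verify \eqref{nonlinear_perturbation}.

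For the quadratic (small $D$) regime, I would exploit $\nabla f(x^*)=0$ to write
\[
  \nabla f(\tilde y_i) - \nabla^2 f(x^*)(\tilde y_i - x^*) = \int_0^1 \bigl[\nabla^2 f(x^* + t(\tilde y_i - x^*)) - \nabla^2 f(x^*)\bigr](\tilde y_i - x^*)\,dt.
\]
Applying the Hessian-Lipschitz hypothesis \eqref{eq:smooth_gradient} inside the integral and integrating $t$ from $0$ to $1$ yields $\|e_i\| \leq \tfrac{M}{2L}\|\tilde y_i - x^*\|^2 \leq \tfrac{M}{2L}D^2$, which is the $\alpha=2$ bound.

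For the linear (large $D$) regime I would not use the Hessian-Lipschitz property at all; instead, by $L$-smoothness and $\nabla f(x^*)=0$, we have $\|\nabla f(\tilde y_i)\|\leq L\|\tilde y_i-x^*\|$, and $\|\nabla^2 f(x^*)\|\leq L$ from \eqref{eq:smooth_strong_convex}. The triangle inequality then gives $\|e_i\| \leq 2\|\tilde y_i - x^*\| \leq 2D$, i.e., $\alpha=1$. Taking the minimum of the two estimates, the quadratic bound dominates for small $D$ (roughly $D \lesssim 4L/M$) and the linear bound takes over for large $D$, giving the two advertised regimes.

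To finish, I would pass from the pointwise bound on $e_i$ to a bound on $\|E_N\|$ using $\|E_N\|\leq \|E_N\|_F \leq \sqrt{N}\,\max_i\|e_i\|$, and then invoke Proposition \ref{prop:explicit_formula_perturbation} to get $\|P_N\|\leq (1+\sum_{j=1}^N(1-\kappa)^j \bar L_j)\,\|E_N\|$. Combining these with the factor $\|I-\beta(G-I)\|\leq 1$ (since $0\preceq G\preceq(1-\kappa)I$ and $\beta=1$ in our application) packages everything into a constant $\gamma$ independent of $D$, so that $\|I-\beta(G-I)\|\|P_N\|+\|E_N\| \leq \gamma \sqrt{N}\,D^\alpha$, which is exactly \eqref{nonlinear_perturbation}. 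The only subtlety, rather than a real obstacle, is being careful to identify the two regimes and to absorb the cascade prefactor $1+\sum_j(1-\kappa)^j\bar L_j$ together with either $M/(2L)$ or $2$ into a single $\gamma$; the mathematical content reduces to the Taylor remainder estimate above once $\nabla f(x^*)=0$ is used.
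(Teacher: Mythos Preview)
Your proof is correct and follows essentially the same approach as the paper: the paper simply cites \citet{nesterov2006cubic} for the pointwise bound $\|Le_i\|\le\min\{L\|y_i-x^*\|,\tfrac{M}{2}\|y_i-x^*\|^2\}$ and then says ``since $\|y-x^*\|\le O(D)$ we get the desired result,'' whereas you derive that bound directly via the Taylor integral remainder and the triangle inequality (with a harmless constant $2$ instead of $1$ in the linear regime). Your extra step, pushing the $\|e_i\|$ bound through $\|E_N\|_F\le\sqrt{N}\max_i\|e_i\|$ and Proposition~\ref{prop:explicit_formula_perturbation} to verify \eqref{nonlinear_perturbation}, is exactly what the paper leaves implicit.
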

\begin{proof}
\citet{nesterov2006cubic} show that
\[
    \textstyle L e_i \leq\min\{ L\|y_i-x^*\| ,\;\;
    \frac{M}{2} \|y_i-x^*\|^2\}
\]
    Since $\|y-x^*\| \leq O(D)$ we get the desired result.
\end{proof}

The combination of Proposition \ref{prop:asymptotic_optimal_rate} with Proposition \ref{eq:bound_function_smooth_gradient} means that RNA (or CNA) converges asymptotically when $\lambda$ (or $\tau$) are set properly. In other words, if $\lambda$ decreases a little bit faster than the perturbations, the extrapolation on the perturbed iterations behave like it accelerates an perturbation-free scheme. This means its rate of convergence is optimal (Theorem \eqref{thm:optimal_rate}). Our result improves the one in \cite{scieur2016regularized,scieur2017nonlinear}, where $r\in]0,\frac{2(\alpha-1)}{3}[$.

\section{Combining RNA and Nesterov Acceleration: an Optimal and Adaptive Algorithm}
\label{sec:conbination_nesterov}

We now briefly discuss a strategy that combines the Nesterov's acceleration and RNA approaches. This means using RNA instead of the classical momentum term in Nesterov's original algorithm.  Using RNA, we can produce iterates that are adaptive to the problem constants, while ensuring an optimal upper bound if one provides constants $L$ and $\mu$. We show below how to design a condition that decides after each gradient steps if we should combine previous iterates using RNA or Nesterov coefficients.

Nesterov's algorithm \cite{nesterov2013introductory} first performs a gradient step, then combines the two previous iterates following \eqref{eq:nesterov_algo}. However, a more generic version with a basic line search reads 
\BEQ
    \begin{cases}
        \text{Find } x_{i+1} : f(x_{i+1}) \leq f(y_i) - \frac{1}{2L}\|f(y_i)\|_2^2\\
        \textstyle y_{i+1}  = (1+\beta) x_{i+1} - \beta x_{i}, \quad \beta = \frac{1-\sqrt{\kappa}}{1+\sqrt{\kappa}}\,.
    \end{cases} \label{eq:general_nesterov_step}
\EEQ
The first condition is automatically met when we perform the gradient step $x_{i+1} = x_i - \nabla f(x_i)/L$. Based on this, we propose the following algorithm.

\begin{algorithm}[htb]
   \caption{Optimal Adaptive Algorithm}
    \label{algo:optimal_adaptive}
\fbox{\parbox{0.95\linewidth}{
\begin{algorithmic}
    \STATE Compute gradient step $x_{i+1} = y_{i} - \frac{1}{L} \nabla f(y_i)$.
    \STATE Compute $\yex = \textbf{RNA}(X,Y,\lambda)$.
    \STATE Let
    \[
        z = \frac{\yex + \beta x_i}{1+\beta}
    \]
    \STATE Chose the next iterate, such that
    \[
        y_{i+1} =
        \begin{cases}
            \yex \quad \text{If}\;\; f(z) \leq f(x_i) - \frac{1}{2L}\|f(x_i)\|_2^2,\\
            (1+\beta) x_i - \beta x_{i-1}\quad \text{Otherwise}.
        \end{cases}
    \]
\end{algorithmic}
}}
\end{algorithm}

Algorithm \ref{algo:optimal_adaptive} has an optimal rate of convergence, and the proof is straightforward. If we do not satisfy the condition, then we perform a standard Nesterov step. Otherwise, we pick $z$ instead of the gradient step, and we combine
\[
    y_{i+1} = (1+\beta) z - \beta x_{i-1} = \yex.
\]
By construction this satisfies~\eqref{eq:general_nesterov_step}, and inherits its properties, like an optimal rate of convergence.

\section{Numerical Experiments on Logistic Regression and CNN} \label{sec:num_experiment}

\textbf{Logistic regression.} We test Algorithm \ref{algo:rna} on a standard machine learning problem, logistic regression with $\ell_2$ regularization. We use the Sido0 dataset from \cite{guyon2008sido}, binary classification of $12 000$ instances with $5 000$ features each, and the MNIST dataset \cite{lecun2010convolutional} (in this section, for this dataset, we used a one-versus-all approach with the digit $9$). We set the regularization parameter so that $\kappa$ is bounded by $10^{-6}$ in the deterministic case, or $L/(100\times \text{\#samples})$ in the stochastic case. Both situations correspond to ill-conditioned strongly convex functions. We present further numerical experiments on more datasets in Appendix \ref{sec:detem} (deterministic algorithms) and Appendix \ref{sec:stoch} (stochastic algorithms). We also apply our algorithm to the MNIST dataset in Appendix \ref{sec:mnist}.

We first solve the problem using deterministic methods, then stochastic methods for minimizing finite sums. In all experiments, we use RNA with parameters $N=10$, $\beta = 1$ and $\lambda = 10^{-8}$. Figure \ref{fig:comparison_reg} in Appendix \ref{sec:num_experiment} shows that the parameter $\lambda$ controls the stability of the method. Even in this simple example, with the presence of ``nonlinear'' noise, the unregularized Algorithm \ref{algo:rna} (a.~k.~a.~Anderson acceleration) is clearly unstable. Regularization is thus crucial to ensure convergence.

In Figure \ref{fig:experiments} we highlight the benefits of the combination of RNA with another optimization algorithm. On the left, we accelerate deterministic algorithms, gradient descent and Nesterov fast gradient \cite{nesterov2013introductory}. In Appendix \ref{sec:conbination_nesterov} we show how to combine RNA and fast gradient methods to preserve the optimal convergence rate. In the center, we only accelerate gradient descent, as this approach gives better results. Online RNA  significantly improves the performance of the accelerated algorithm. Our acceleration algorithm applied to gradient method is also competitive with BFGS, but does not need a line-search. Furthermore, the extension of quasi-Newton methods to stochastic algorithms is not straightforward as it requires the computation of Hessian-vector products \cite{byrd2016stochastic} while our technique applies in this setting.

On the right of Figure \ref{fig:experiments}, we combine RNA with stochastic algorithms such as SGD or SAGA~\cite{defazio2014saga}. Again, we see that Algorithm \ref{algo:rna} improves significantly the convergence rate of the method, without the requirement of a line-search, and is competitive with, e.g., Katyusha \cite{allen2016katyusha}, an accelerated algorithm for minimizing finite sums of functions. We notice that RNA on SGD does not converge. This is predicted by Proposition \ref{prop:convergence_stoch_gradient} (and this was also an observation from \cite{scieur2017nonlinear}), because contrary to SAGA, in SGD the variance of the noise $\sigma^2$ does not decrease as $x_i$ converges to $x^*$.

\textbf{CNN preliminary experiments.} We conduct neural network experiments on the ImageNet dataset in order to show the scability of our method, using a standard ResNet-50, which  incorporates a momentum at training time. We compare the offline and online acceleration, the latter being less efficient on this non-convex problem.

 Our models are trained via SGD with momentum $0.9$, an $\ell^2$-regularization of $10^{-4}$ and standard data-augmentation. At each epoch, the step-size is linearly decreased from $10^{-1}$ to $10^{-3}$. At test time, we report the accuracy of a single central crop. The model is accelerated via RNA at the end of each epoch, with $N=10,\lambda=10^{-9}$. Figure \ref{fig:resnet18} highlights that combined with offline-RNA, the performance of a ResNet-50 is not altered but this is not the case of online RNA.

We now describe the differences raised by the use of RNA. First, we observe that the computations involved are negligible. Given the fact that, on standard hardware, a GPU is about 2 orders of magnitude faster than CPUs for solving a linear problem, Figure \ref{tab:speed} indicates that  a fordward and backward pass of a single batch of a deep neural network requires more flops than a RNA pass. For this experiment, we used two P100 GPUs (\textit{a single forward+backward for a batch of size $256\times 3 \times 224\times 224$}) and a 24 cores CPU (\textit{a single acceleration via RNA for various values of $N$}); we remark that, in general, a single acceleration step of  RNA at the end of an epoch would also be diluted by order of magnitudes in the mass of computations due to the SGD with mini-batches on a whole epoch. Note also that RNA is more appropriate for CPU computations as $N$ increases, the memory consumption being higher. Secondly, we noticed that the weights $c_\lambda$ obtained via RNA are close to an uniform averaging. Obtaining those weights is not surprising given Theorem \ref{thm:convergence_perturbation}, since the noise in the SGD step is particularly high in this setting.

\begin{figure}[ht]
\centering
  \includegraphics[width=0.32\linewidth]{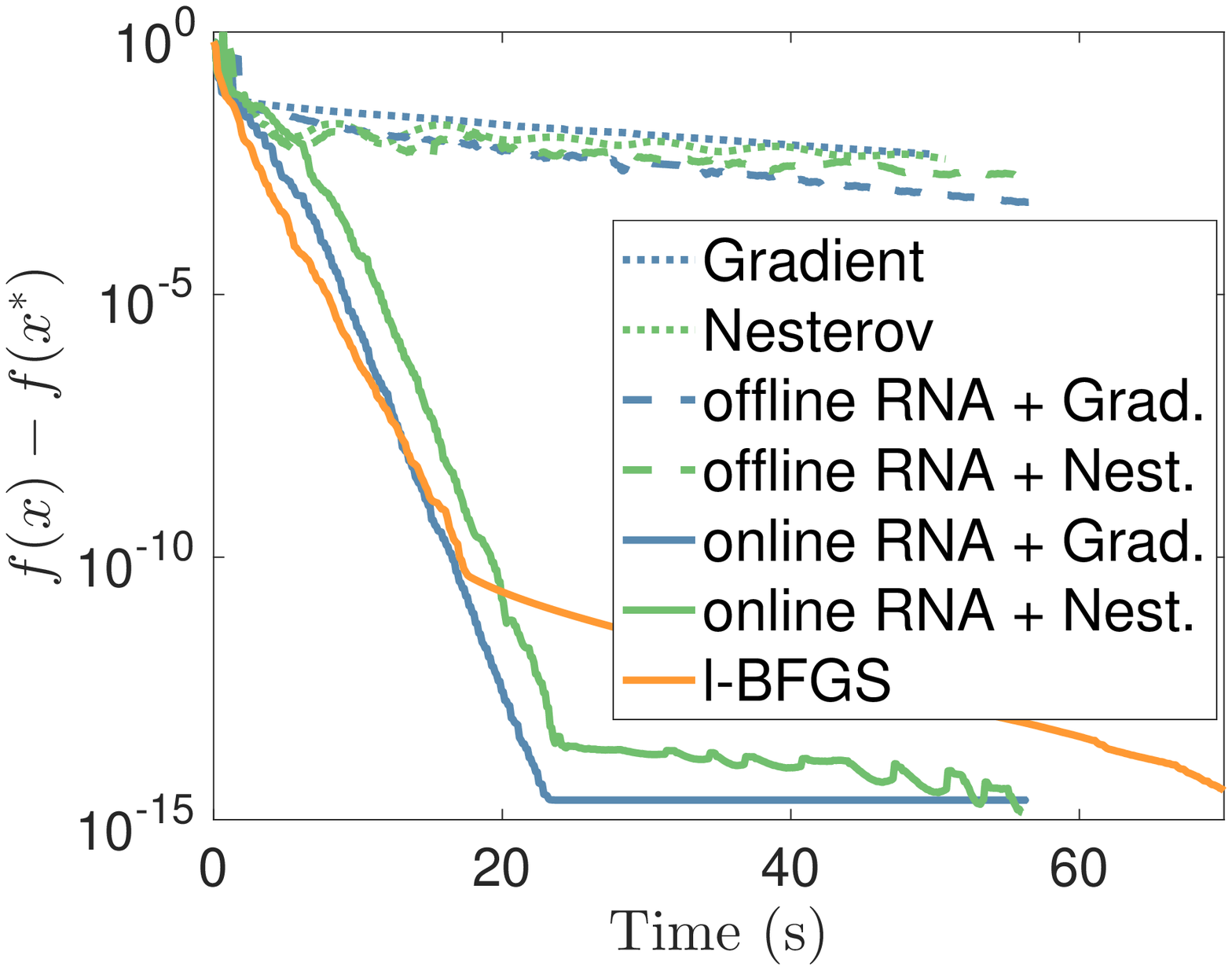}
  \includegraphics[width=0.32\linewidth]{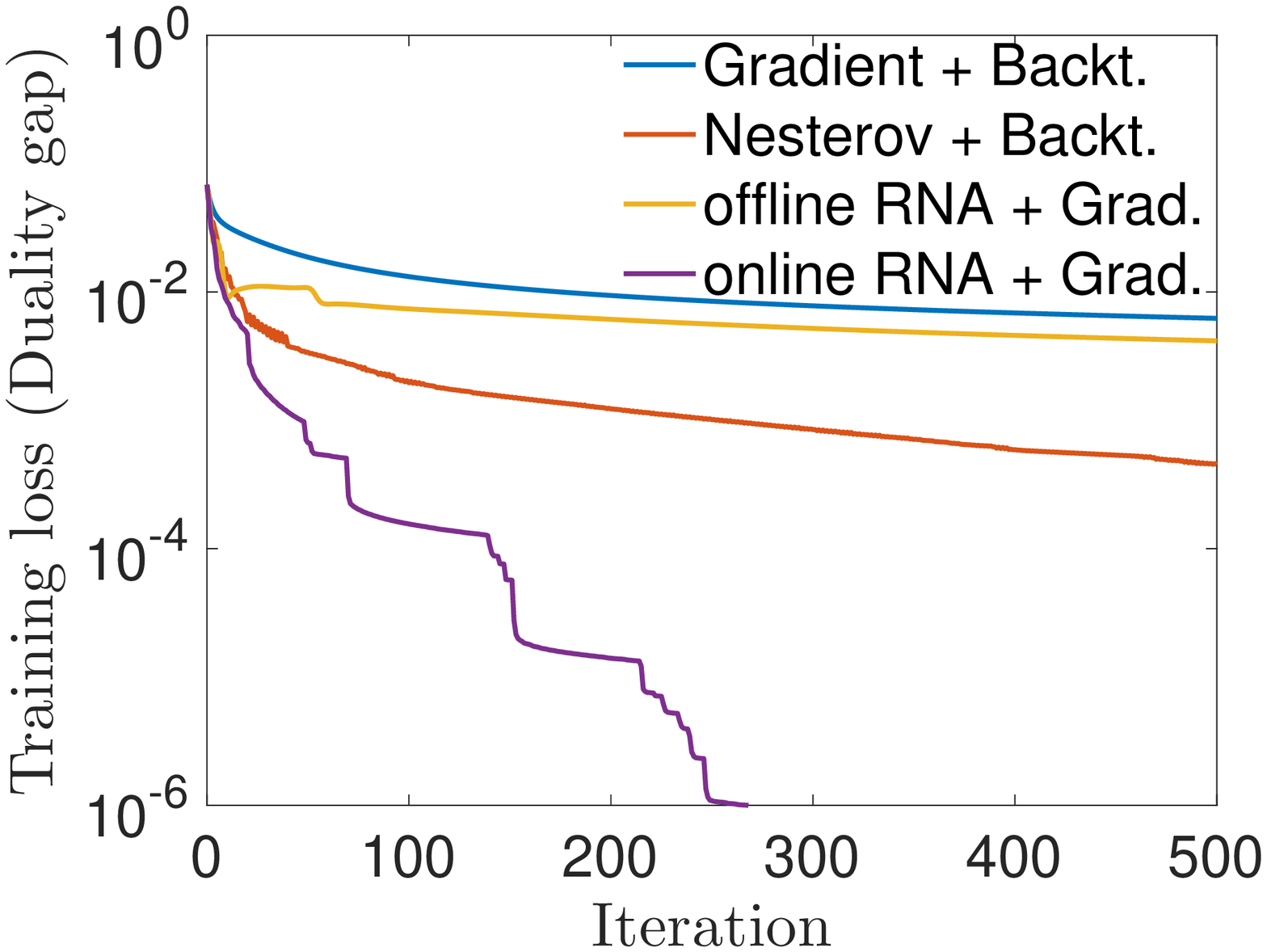}
  \includegraphics[width=0.32\linewidth]{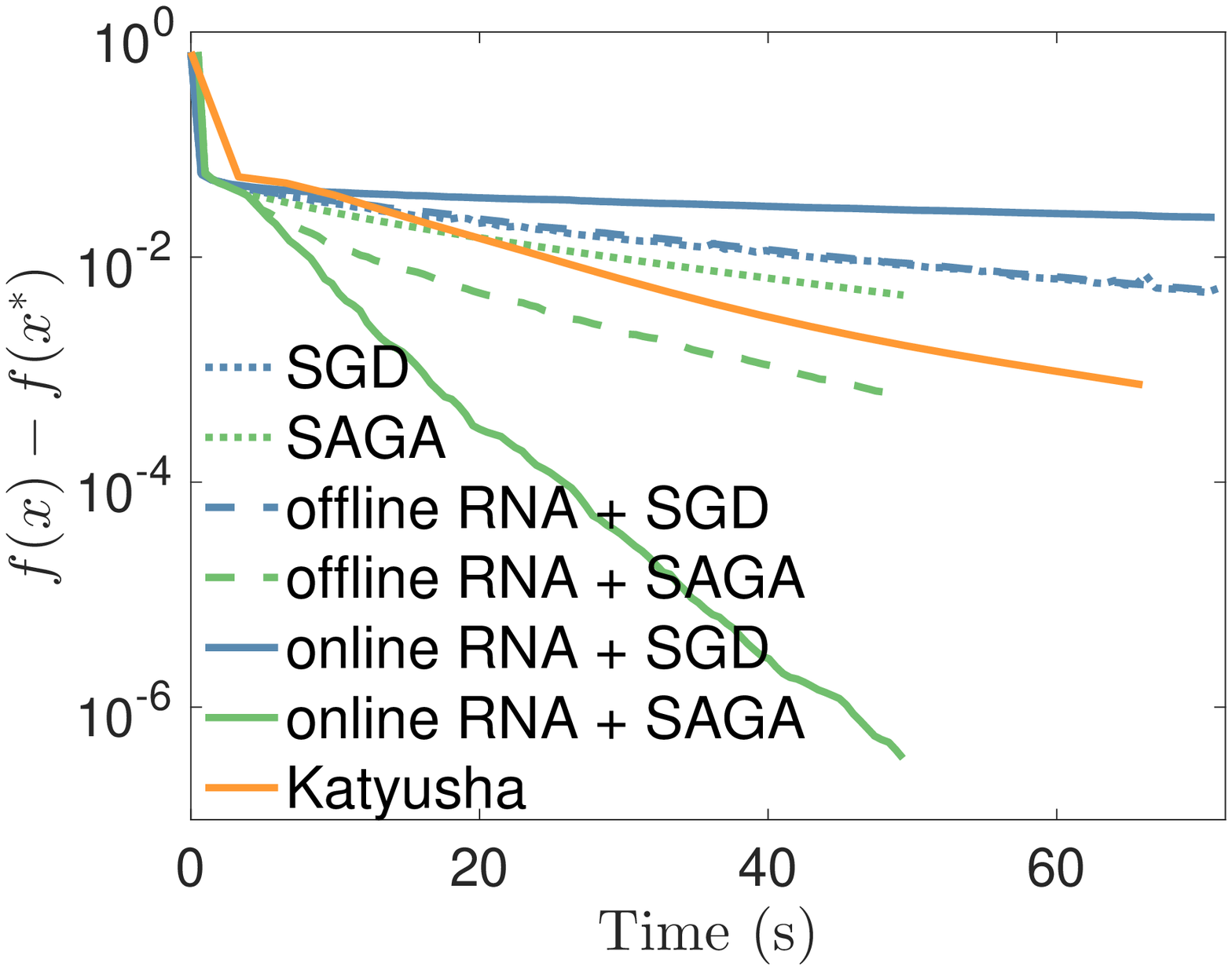}

  \caption{(Left) Logistic regression on sido0 using deterministic algorithms; (middle) logistic regression on MNIST dataset (digit 9 vs all) with deterministic algorithms; (right) logistic regression on Sido0 dataset with stochastic algorithms.}
  \label{fig:experiments}
\end{figure} \begin{figure}[ht]
\centering
  \begin{minipage}{.42\linewidth}
  \includegraphics[width=0.9\textwidth]{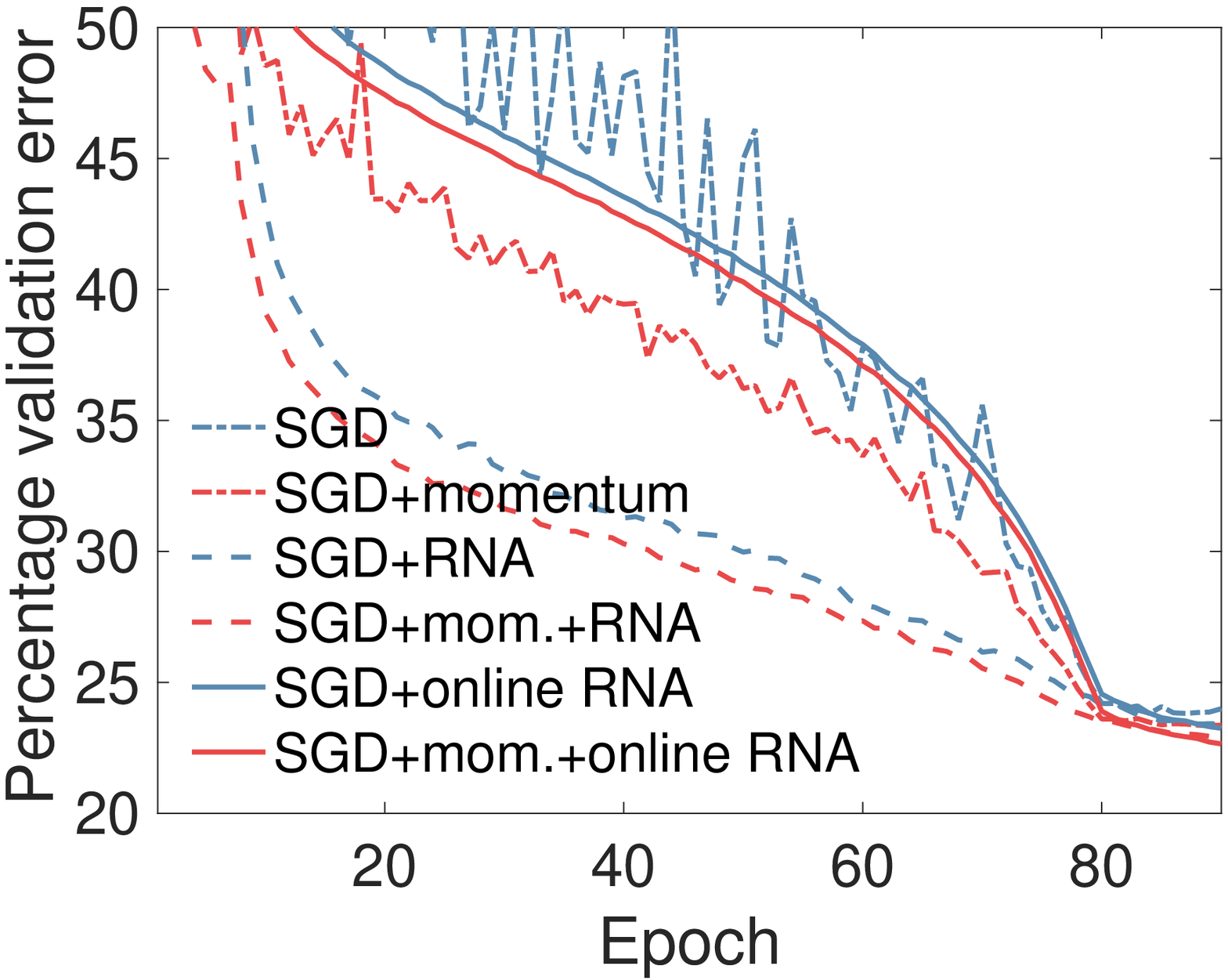}
  \caption{Validation error on ImageNet with offline or online RNA and a standard training on a ResNet-50.}
  \label{fig:resnet18}
  \end{minipage}
  \hspace{6ex}
  \begin{minipage}{.45\linewidth}
  \includegraphics[width=0.9\textwidth]{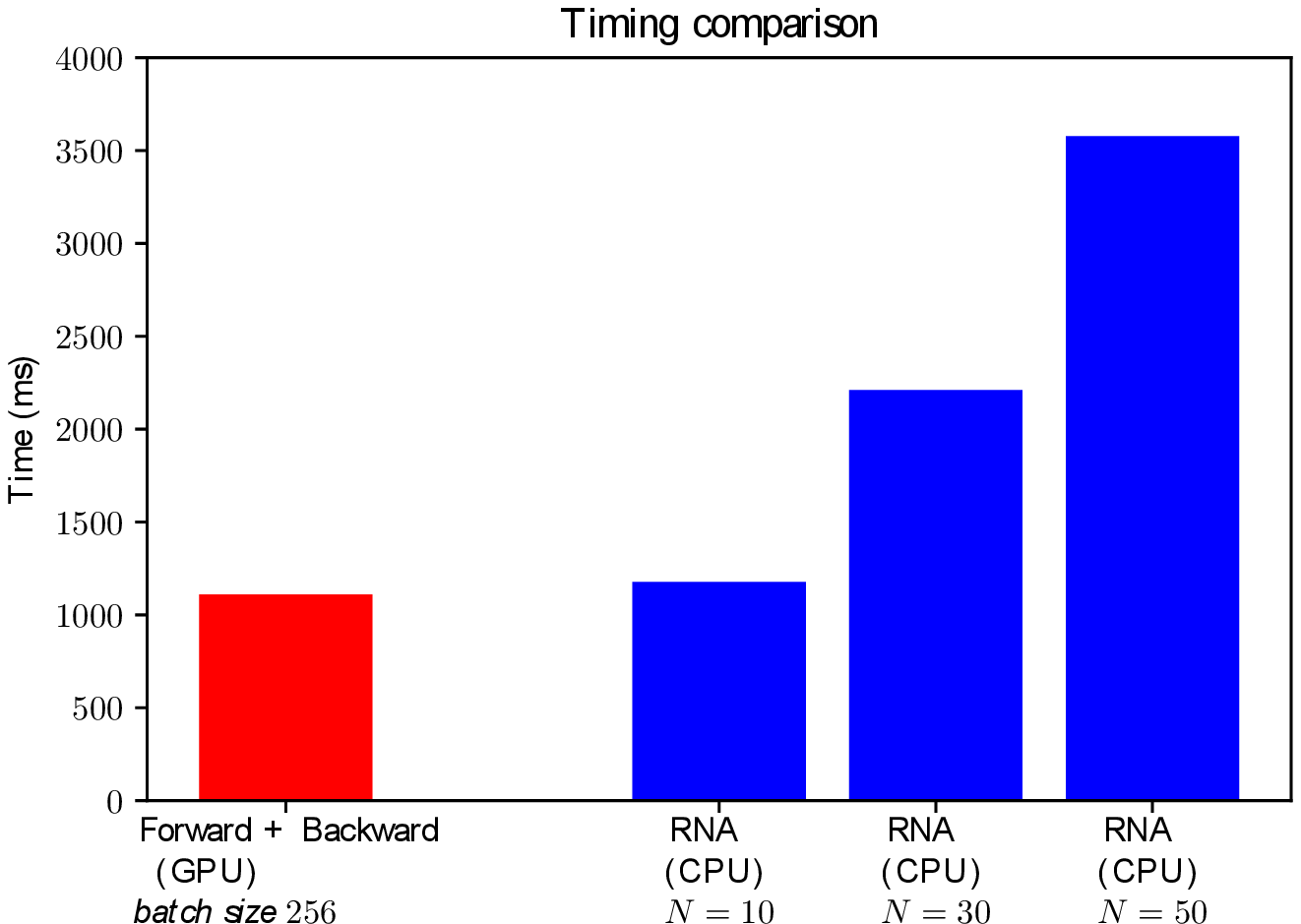}
  \caption{Speed comparison between RNA (CPU) and a forward+backward pass(GPU), for ResNet-50.}
  \label{tab:speed}
  \end{minipage}
\end{figure}

\clearpage
{
    \bibliography{biblio,MainPerso}
    \bibliographystyle{plainnat}
}

\clearpage

\appendix

\section*{Appendices for the paper Online Regularized Nonlinear Acceleration}

This part contains the supplementary materials for the paper \textit{Online Regularized Nonlinear Acceleration}. The appendices are structured as follow. We first present in Appendix \ref{prop:online_accel_structure_proof} a poposition that shows the special structure of iterates of the online RNA algorithm. In Appendices \ref{sec:add_num_exp} and \ref{sec:mnist} we present additionnal experiments on logistic regression on several datasets.

\startcontents[sections]

\section*{Table of Contents (Appendix)}

\printcontents[sections]{}{1}{\setcounter{tocdepth}{2}}

\clearpage

\section{Proposition \ref{prop:online_accel_structure}}
\label{prop:online_accel_structure_proof}

\begin{proposition} \label{prop:online_accel_structure}

Let $X$, $Y$ \eqref{eq:def_xy} be build using iterates from \eqref{eq:general_iteration}. Let $g$ be the linear, satisfying the assumptions of Theorem \eqref{thm:optimal_rate}. Consider $\yex$ the output of Algorithm \ref{algo:rna} with $\lambda = 0$ and $\beta \neq 0$. If $R = X-Y$ is full column rank, then $\cl_N \neq 0$. Otherwise, $\yex = x^*$.
\end{proposition}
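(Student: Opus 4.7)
The plan is to reduce the $\lambda=0$ coefficient formula to a polynomial optimization and then exploit the symmetry of $G$. Using Proposition~\ref{prop:poly_iter}, the columns of $R$ can be written $r_i = A\,p_{i-1}(G)\,w$, where $A := G-I$, $w := x_0-x^*$, and $p_{i-1}\in\mathcal{P}_{[i-1]}^{(1)}$ has \emph{exact} degree $i-1$. Every $c$ with $c^T\textbf{1}=1$ thus corresponds bijectively to $q = \sum_{i=1}^N c_i\,p_{i-1}$ in the affine set $\{q : \deg q \leq N-1,\, q(1)=1\}$, and $c_N\neq 0$ is equivalent to $\deg q = N-1$ because the leading coefficient of $q$ is $c_N$ times that of $p_{N-1}$. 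The assumption $G\preceq(1-\kappa)I$ makes $A$ symmetric negative definite, so $A$ is invertible and
\[
\mathrm{rank}(R) \;=\; \dim\bigl(\mathrm{span}\{w,Gw,\ldots,G^{N-1}w\}\bigr) \;=\; \min(N,d),
\]
where $d$ is the degree of the minimal annihilator $m$ of $w$ under $G$. Therefore $R$ has full column rank iff $d\geq N$.

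If $d<N$ (the ``otherwise'' case), the spectrum of $G$ on the Krylov space lies in $[0,1-\kappa]$, so $1$ is not a root of $m$; set $\tilde m := m/m(1)$, giving $\tilde m(1)=1$, $\deg\tilde m = d\leq N-1$, and $\tilde m(G)w=0$. Expanding $\tilde m$ in the basis $\{p_0,\ldots,p_{N-1}\}$ produces a feasible $\tilde c$ with $R\tilde c = A\,\tilde m(G)w = 0$, so $0$ is the minimum value of $\|Rc\|^2$ on the constraint set. Any minimizer $c$ (in particular the limit $\lambda\to 0^+$ of the regularized formula) therefore satisfies $Rc=0$, which by invertibility of $A$ is equivalent to $q(G)w=0$. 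Hence $\yex - x^* = (I-\beta A)\,q(G)\,w = 0$.

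If $d\geq N$, suppose for contradiction that $c_N=0$, so the unique minimizer $q^*$ has $\deg q^*\leq N-2$ and $q^*(G)w$ lies in $K_{N-1} := \mathrm{span}\{w,Gw,\ldots,G^{N-2}w\}$. Every feasible direction has the form $h(x)=(x-1)k(x)$ with $\deg k\leq N-2$; first-order optimality together with the symmetry of $A$ yields
\[
0 \;=\; \langle A\,q^*(G)w,\; A\,h(G)w\rangle \;=\; \langle q^*(G)w,\; A^3\,k(G)w\rangle \quad\text{for all }k\text{ with }\deg k\leq N-2.
\]
Since $A\preceq -\kappa I$, $A^3$ is symmetric negative definite on $\mathbb{R}^d$; in any basis $P$ of $K_{N-1}$ (which has dimension $N-1$ because $d\geq N$), the Gram matrix $P^T A^3 P$ is negative definite, so the bilinear form $B(u,v):=\langle u,A^3v\rangle$ restricted to $K_{N-1}$ is non-degenerate. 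Combined with $q^*(G)w\in K_{N-1}$, this forces $q^*(G)w=0$. But $q^*(1)=1$ makes $q^*$ a nonzero polynomial of degree $\leq N-2 < d = \deg m$, contradicting the minimality of $m$ as an annihilator of $w$. Hence $c_N\neq 0$.

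The main obstacle is the contradiction step in the full-rank case: isolating the correct family of test directions $h(x)=(x-1)k(x)$ and verifying that $A^3$ remains definite after restriction to the $(N-1)$-dimensional Krylov subspace. Both rely crucially on $G$ being symmetric with spectrum strictly below $1$, which guarantees that $A$ is invertible and that $P^T A^3 P\prec 0$ for any full-column-rank basis matrix $P$ of $K_{N-1}$.
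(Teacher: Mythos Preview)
Your proof is correct and takes a genuinely different route from the paper's argument.

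For the full-rank case, the paper works directly with the normal equations: it assumes $c_N=0$, deduces that the truncated vector $c_{[1:N-1]}$ must coincide with the minimizer $c^-$ for $R^-$, writes the two systems $R^TRc=\alpha\textbf{1}_N$ and $(R^-)^TR^-c^-=\alpha\textbf{1}_{N-1}$, and extracts a contradiction by subtracting them componentwise. Your argument instead passes to the polynomial picture via Proposition~\ref{prop:poly_iter}, applies the first-order optimality condition against test directions $h(x)=(x-1)k(x)$, and uses that $A^3=(G-I)^3$ is negative definite to force $q^*(G)w=0$. This is cleaner: it makes explicit where the symmetry hypothesis on $G$ enters (through the identity $\langle Au,A^2v\rangle=\langle u,A^3v\rangle$ and the definiteness of $A^3$), whereas in the paper's linear-algebraic manipulation the role of symmetry is hidden in the use of the closed form $(R^TR)^{-1}\textbf{1}$.

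For the rank-deficient case, both arguments exploit the Krylov structure to produce an annihilating polynomial of degree at most $N-1$ normalized at $1$. Your version is slightly more complete: you close the loop by writing $\yex-x^*=(I-\beta A)q(G)w=0$, while the paper stops at $\min\|Rc\|=0$. Your remark that the $\lambda=0$ solution should be read as the $\lambda\to0^+$ limit is a sensible interpretation when $R^TR$ is singular.

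One small simplification you could make: once you have $\langle q^*(G)w,\,A^3 v\rangle=0$ for all $v\in K_{N-1}$, you can take $v=q^*(G)w$ directly and conclude from $A^3\prec0$, without introducing the Gram matrix $P^TA^3P$.
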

\begin{proof}
    Since, by definition, $\textbf{1}^T c^{\lambda}=1$, it suffices to prove that the last coefficient $c^{\lambda}_N \neq 0$. For simplicity, in the scope of this proof we write $c=c^{\lambda}$ We prove it by contradiction. Let $R^{-}$ be the matrix $R$ without its last column, and $c^-$ be the coefficients computed by RNA using $R^-$. Assume $c_N = 0$. In this case,
    \[
        Rc = R^- c^-.
    \]
    In other words, because $R$ (and by consequence $R^-$) are full column rank,
    \[
        c = [c^-;\; 0].
    \]
    This also means that, using the explicit formula for $c$ \eqref{eq:cl},
    \[
        \frac{(R^TR)^{-1}\textbf{1}}{\textbf{1}(R^TR)^{-1}\textbf{1}} = \left[ \frac{((R^-)^TR^-)^{-1}\textbf{1}}{\textbf{1}((R^-)^TR^-)^{-1}\textbf{1}};\; 0\right], \qquad \Leftrightarrow \qquad (R^TR)^{-1}\textbf{1} = \left[ ((R^-)^TR^-)^{-1}\textbf{1};\; 0\right].
    \]
    The equivalence is obtained because
    \[
        \textbf{1}(R^TR)^{-1}\textbf{1} = \textbf{1}^T c = \textbf{1}^T c^- = \textbf{1}((R^-)^TR^-)^{-1}\textbf{1}.
    \]
    We can write $c$ and $c^-$ under the form of a linear system,
    \[
        R^TRc = \alpha \textbf{1}_N, \quad ((R^-)^TR^-)c^- = \alpha \textbf{1}_{N-1},
    \]
    where $\alpha  = \textbf{1}(R^TR)^{-1}\textbf{1} = \textbf{1}((R^-)^TR^-)^{-1}\textbf{1}$, which is nonzero. We augment the system with $c^-$ by concatenating zeros,
    \[
        R^TRc = \alpha \textbf{1}_N, \quad 
        \begin{bmatrix}
            ((R^-)^TR^-) & 0_{N-1 \times 1} \\ 
            0_{1 \times N-1} & 0
        \end{bmatrix}
        \begin{bmatrix}
        c^- \\
        0
        \end{bmatrix}
        = \alpha
        \begin{bmatrix}
            \textbf{1}_{N-1}\\
            0
        \end{bmatrix}
    \]
    Let $r^+$ be the residual at iteration $N$. This means $R = [R^-,r^+]$. We substract the two linear systems,
    \[
        \begin{bmatrix}
            0 & R^Tr^+ \\ 
            (r^+)^TR & (r^+)^Tr^+
        \end{bmatrix}
        \begin{bmatrix}
        c^- \\
        0
        \end{bmatrix}
        =
        \begin{bmatrix}
            0\\
            \alpha \neq 0
        \end{bmatrix}
    \]
    The $N-1$ first equations tells us that either $(R^Tr^+)_i$ or $c^-_i$ are equal to zero. This implies 
    \[
        (R^Tr^+)^Tc = \sum{i=1}^{N-1}(R^Tr^+)^T_ic_i=0.
    \]
    However, the last equation reads
    \[
        (R^Tr^+)^Tc + 0\cdot (r^+)^Tr^+ \neq 0.
    \]
    This is a contraction, since 
    \[
        (R^Tr^+)^Tc + 0\cdot (r^+)^Tr^+ = 0.
    \]
    Now, assume $R$ is not full rank. This means there exist a non-zero linear combination such that
    \[
        Rc = 0.
    \]
    However, due to its structure $R$ is a Krylov basis of the Krylov subspace
    \[
        \mathcal{K}_N = \text{span}[r_0,Gr_0,\ldots, G^{N}]
    \]
    If the rank of $R$ is strictly less $N$ (says $N-1$), this means
    \[
        \mathcal{K}_N = \mathcal{K}_{N-1}.
    \]
    Due to properties of the Krylov subspace, this means that
    \[
        r_0 = \sum_{i=1}^{N-1} \alpha_i \lambda_i v_i
    \]
    where $\lambda_i$ are distinct eigenvalues of $G$, and $v_i$ the associated eigenvector. Thus, it suffices to take the polynomial $p^*$ that interpolates the $N-1$ distinct $\lambda_i$. In this case,
    \[
        p^*(G)r_0 = 0.
    \]
    Since $p(1)\neq 0$ because $\lambda_i \leq 1-\kappa < 1$, we have
    \[
        \min \|Rc\| = \min_{p\in \pnm} \|p(G)r_0\| = \frac{p^*(G)}{p(1)}r_0 = 0. 
    \]
\end{proof}

\clearpage
\section{Additional Figures}
\label{sec:add_num_exp}

\subsection{Comparison between Anderson Acceleration (or RNA without regularization) and RNA}
\label{sec:comparison_anderson}

\begin{figure}[ht]
    \centering
    \includegraphics[width=0.5\linewidth]{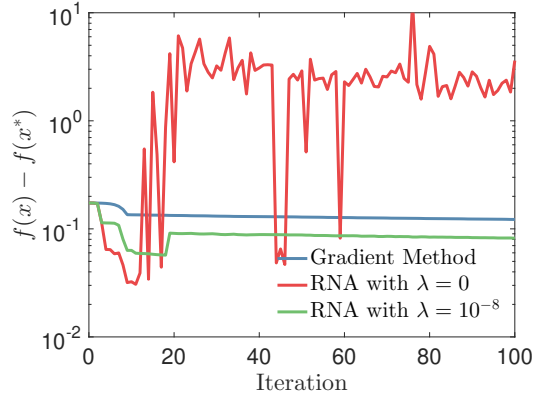}
    \caption{Comparing regularized and unregularized RNA.}
    \label{fig:comparison_reg}
\end{figure}

\subsection{Dataset Description}\label{sec::dataset}

In the following experiments, we use the following datasets.
\begin{itemize}
    \item Sonar UCI \cite{gorman1988analysis}: 208 samples, 60 features.
    \item Madelon UCI \cite{guyon2008feature}: 4400 samples, 500 features.
    \item Sido0 Dataset \cite{guyon2008sido}: 12678 samples, 4932 features.
\end{itemize}

\subsection{Experiments on Logistic regression}

We test Algorithm \ref{algo:rna} on a standard machine learning problem, logistic regression with $\ell_2$ regularization. We use the dataset presented in section \ref{sec::dataset}. We set the regularization parameter so that $\kappa$ is bounded by $10^{-3}$ ("good condition number"), $10^{-6}$ ("regular condition number") and $10^{-9}$ ("bad condition number") in the deterministic case, or $100 L/(\times \text{\#samples})$ ("good condition number"), $L/(1\times \text{\#samples})$ ("regular condition number"), $L/(100\times \text{\#samples})$ ("bad condition number") in the stochastic case.

We first solve the problem using deterministic methods, then stochastic methods for minimizing finite sums. In all experiments, we use RNA with parameters $N=10$, $\beta = 1$ and $\lambda = 10^{-8}$.

\clearpage

\subsection{Legend for Logistic Regression}

\vspace{-2ex}
\begin{figure}[ht]
\centering
\begin{minipage}{.2\linewidth}
  \centering
  \includegraphics[width=1\linewidth]{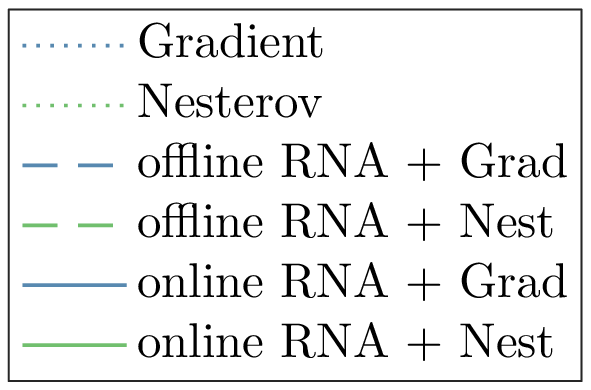}
\end{minipage}
\hspace{0.1\linewidth}
\begin{minipage}{.2\linewidth}
  \centering
  \includegraphics[width=1\linewidth]{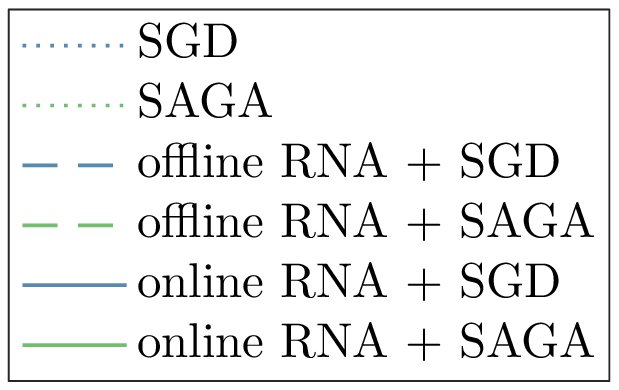}
\end{minipage}
  \caption{Left: legend for deterministic algorithms. Right: legend for stochastic algorithms.}
\end{figure}
\vspace{-3ex}
\subsection{Logistic regression, deterministic}
\label{sec:detem}

\subsubsection{Dataset: sonar (Top to bottom: good, regular and bad condition number)}
\vspace{-3ex}
\begin{figure}[ht]
\centering
\includegraphics[width=0.4\textwidth]{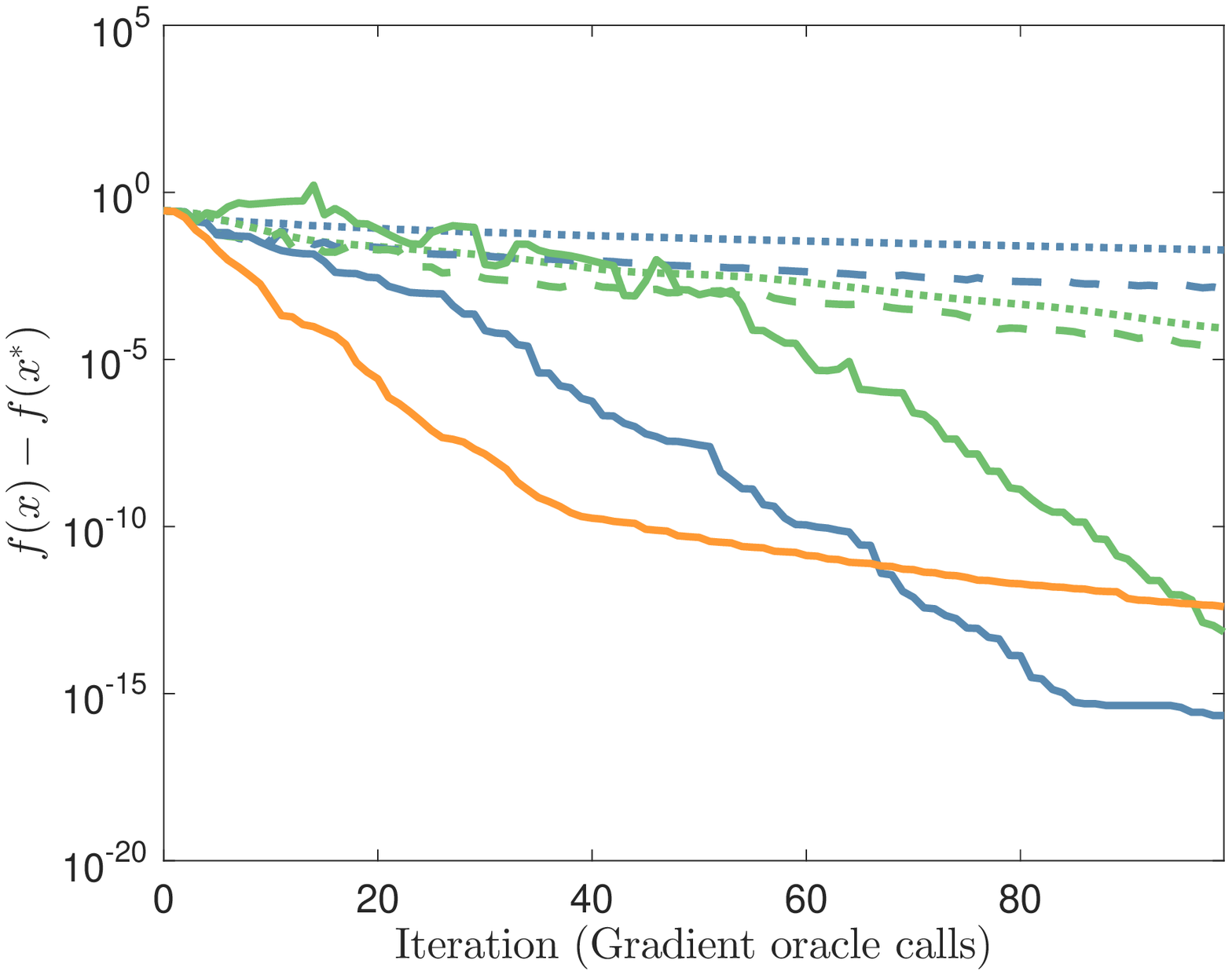}
\includegraphics[width=0.4\textwidth]{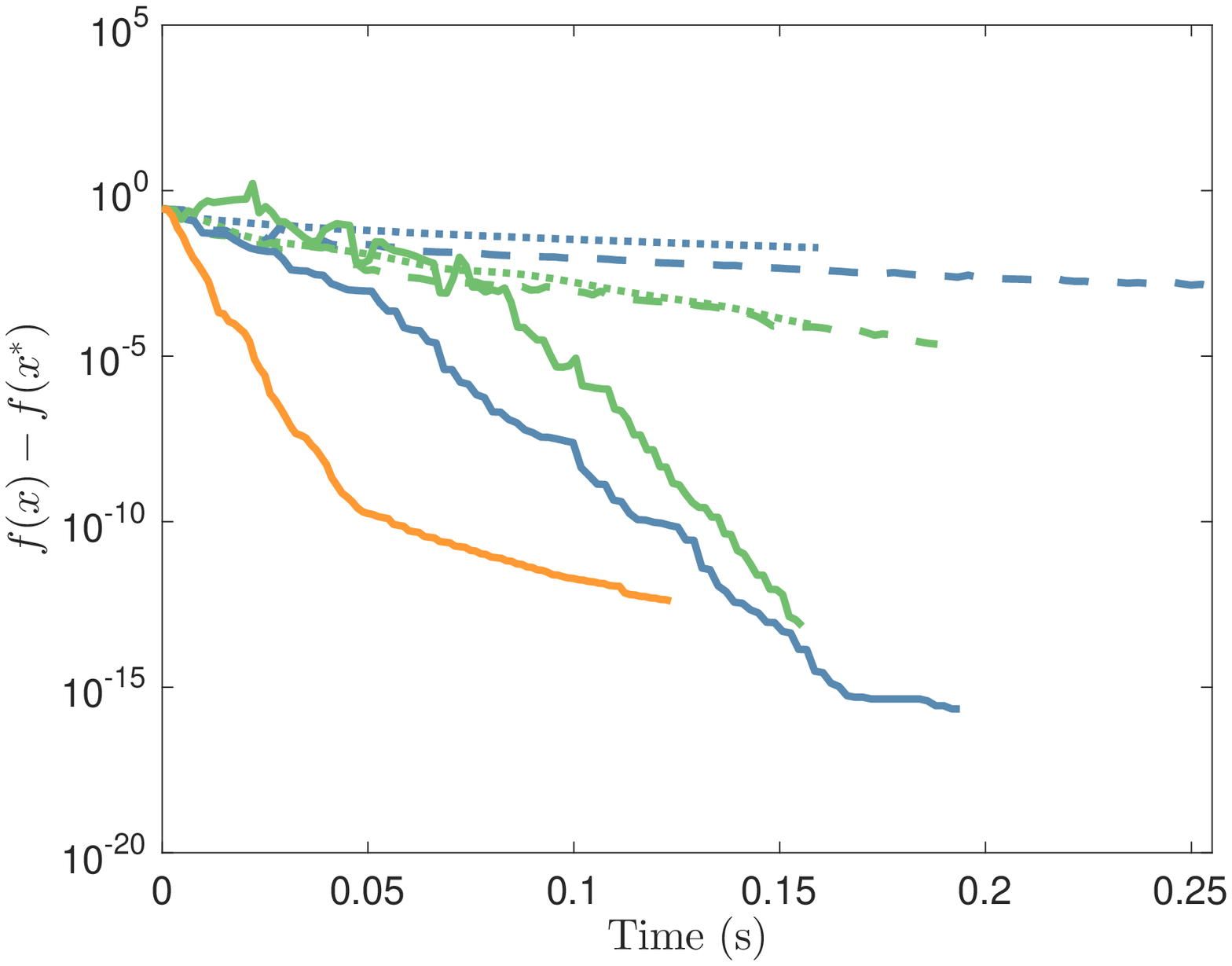}
\end{figure}
\begin{figure}[ht]
\centering
\vspace{-3ex}
\includegraphics[width=0.4\textwidth]{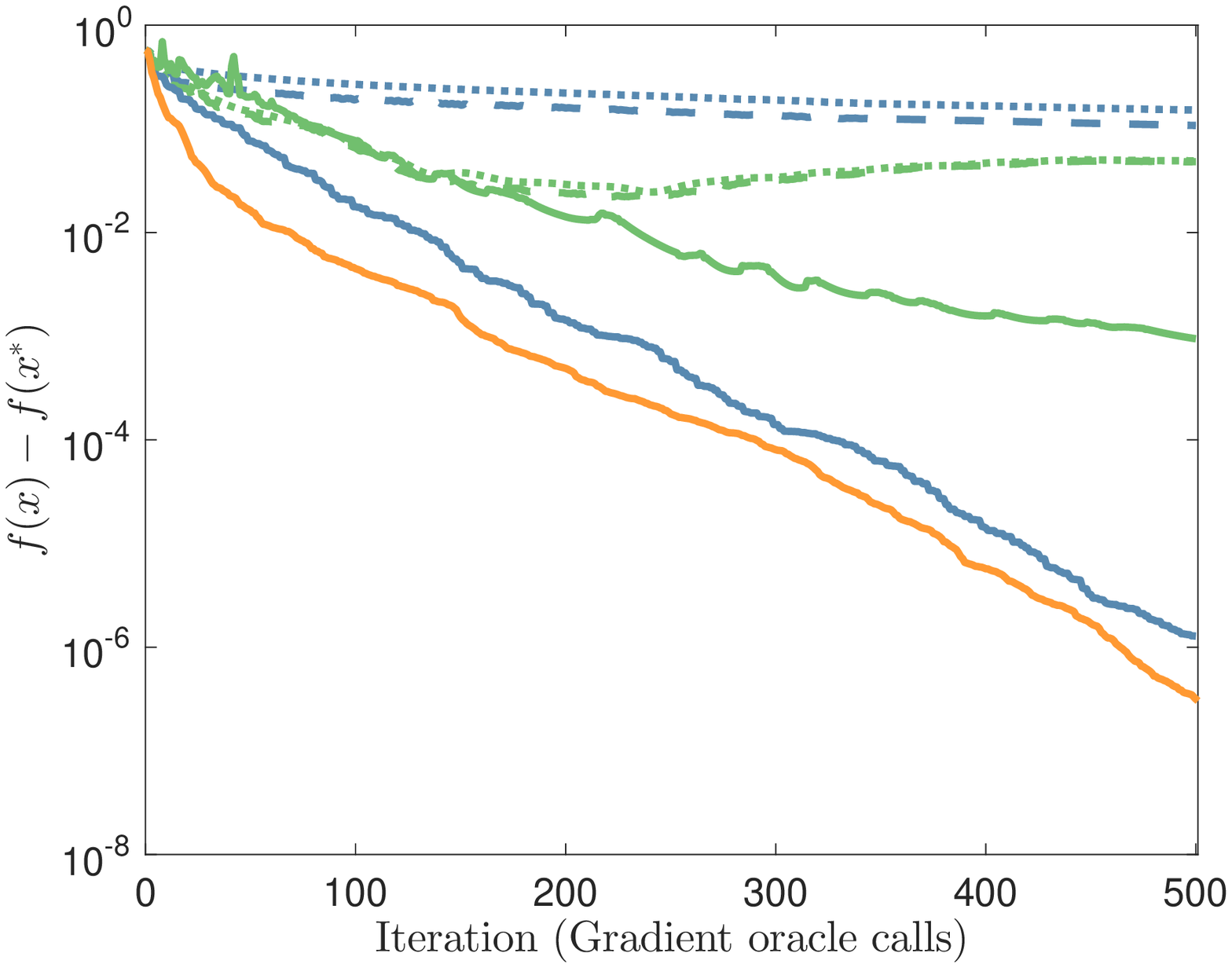}
\includegraphics[width=0.4\textwidth]{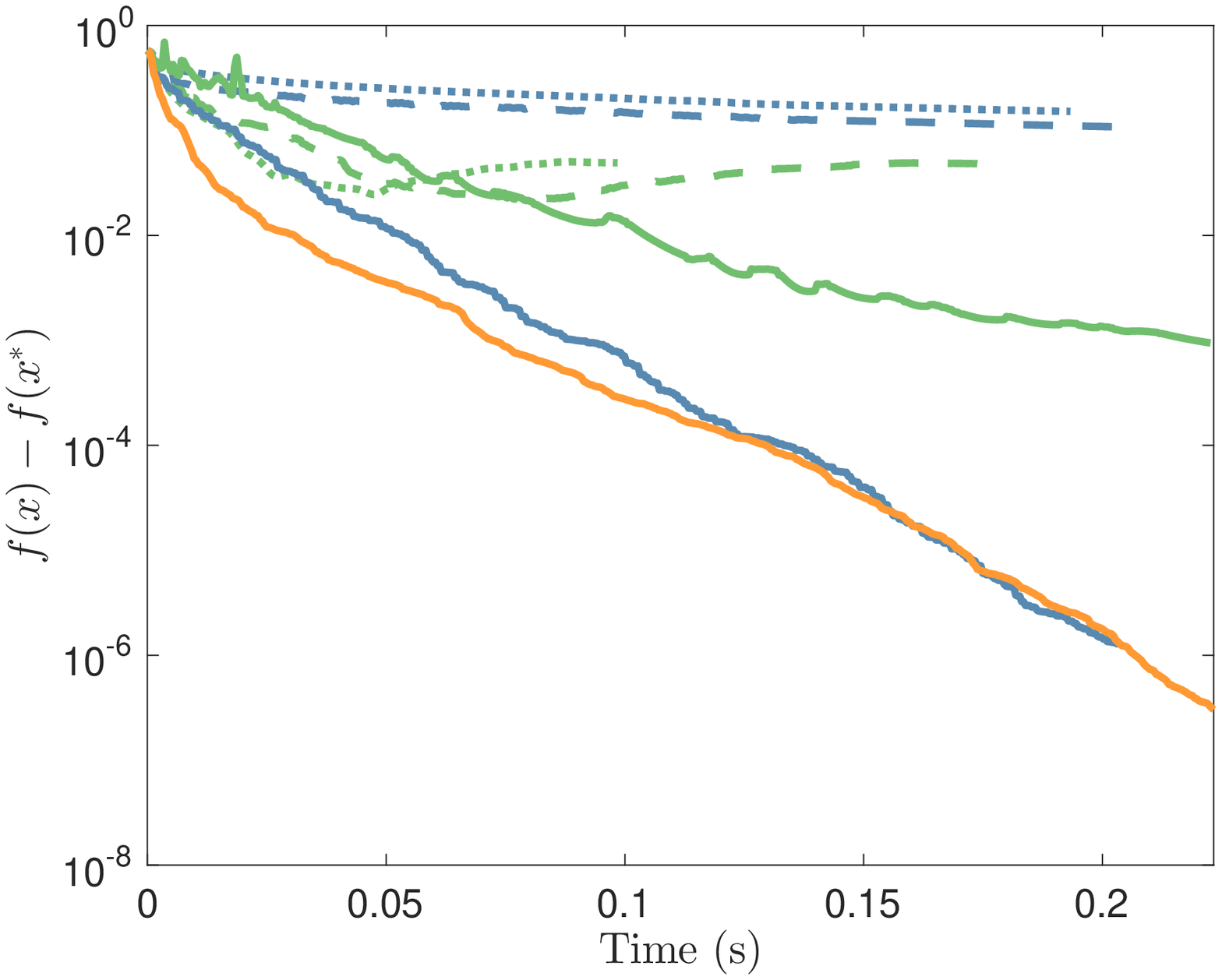}
\end{figure}
\begin{figure}[h!t]
\centering\vspace{-3ex}\includegraphics[width=0.4\textwidth]{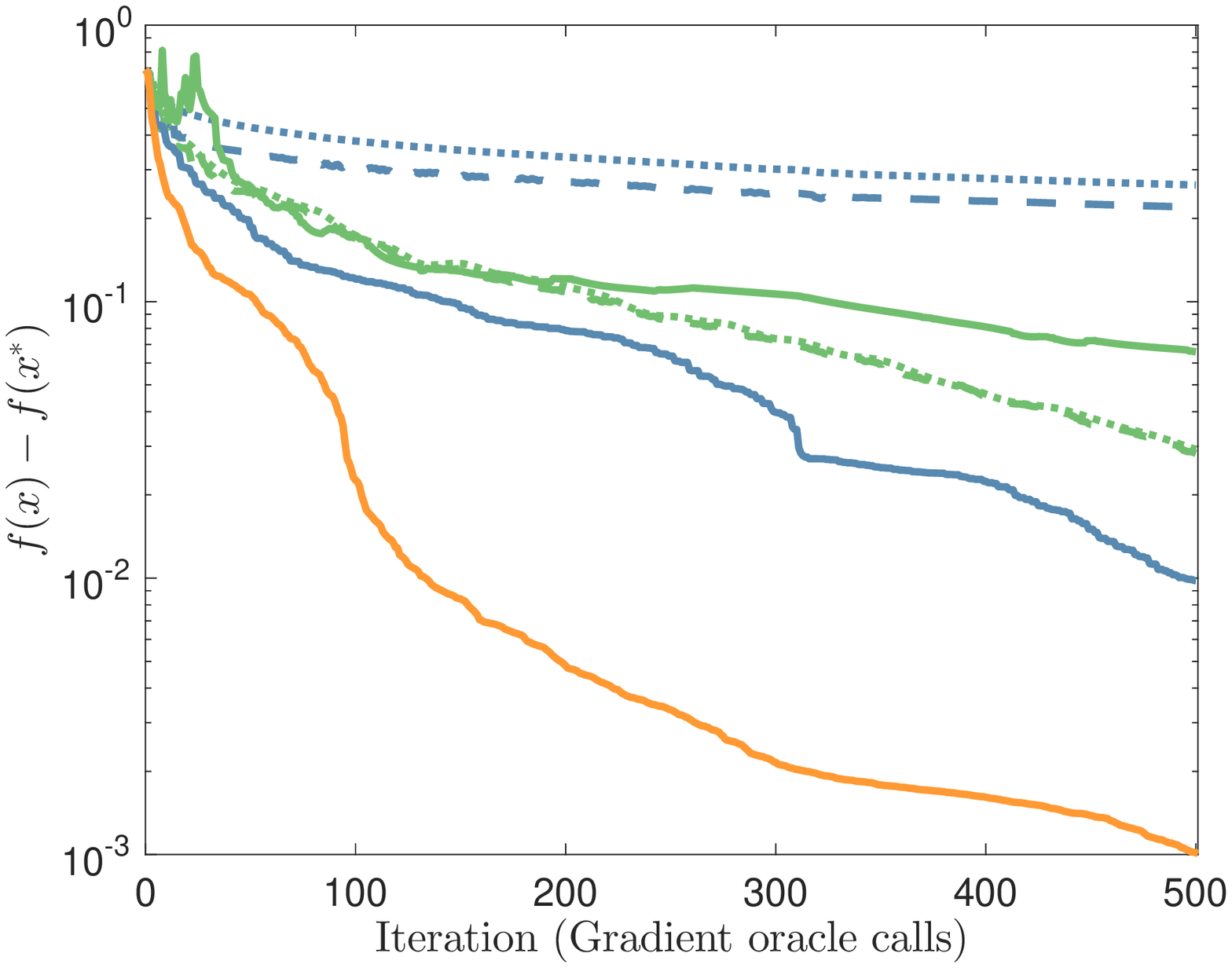}\includegraphics[width=0.4\textwidth]{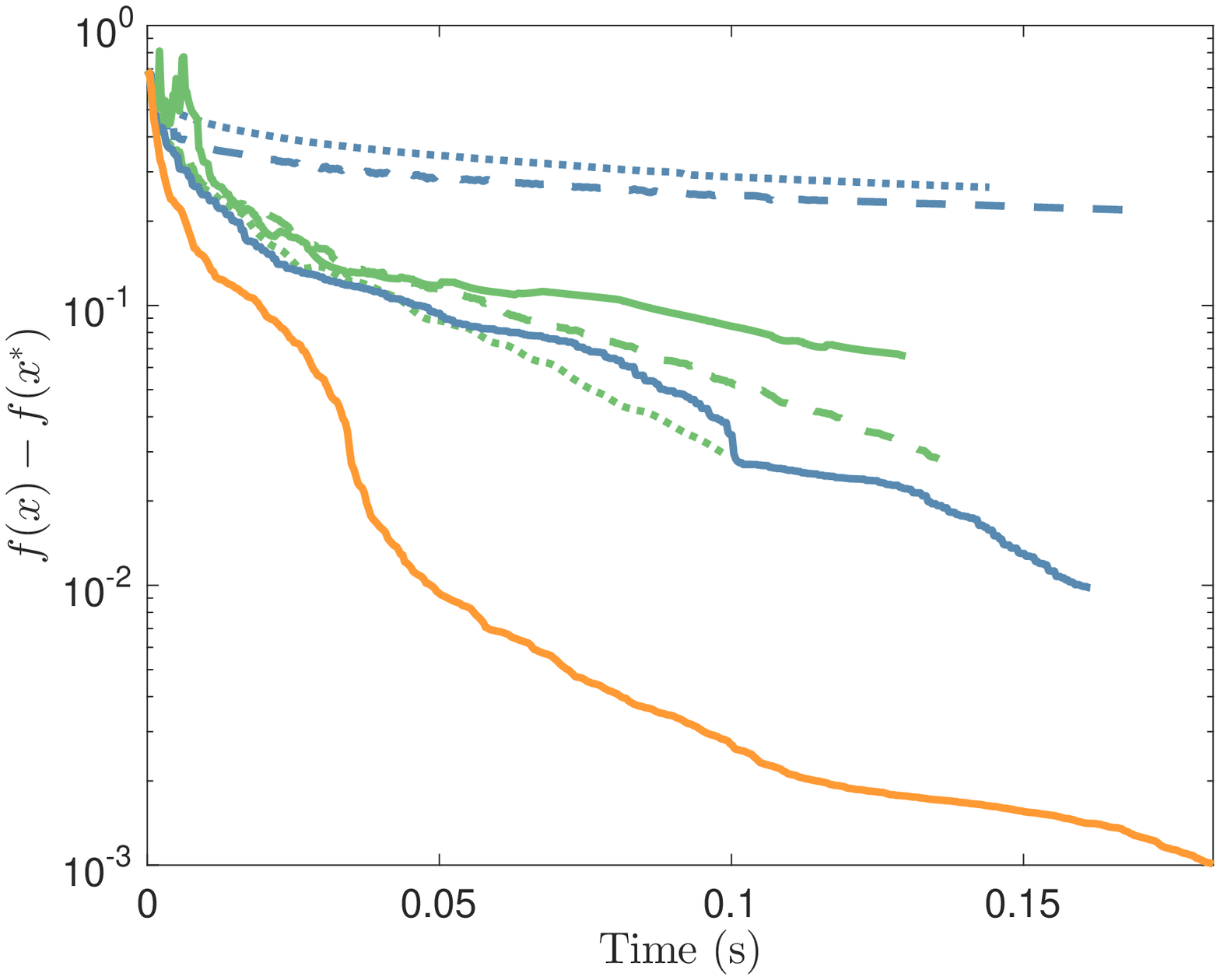}
\end{figure}

\clearpage
 
\subsubsection{Dataset: madelon (Top to bottom: good, regular and bad condition number)}
\begin{figure}[ht]
\centering
\includegraphics[width=0.4\textwidth]{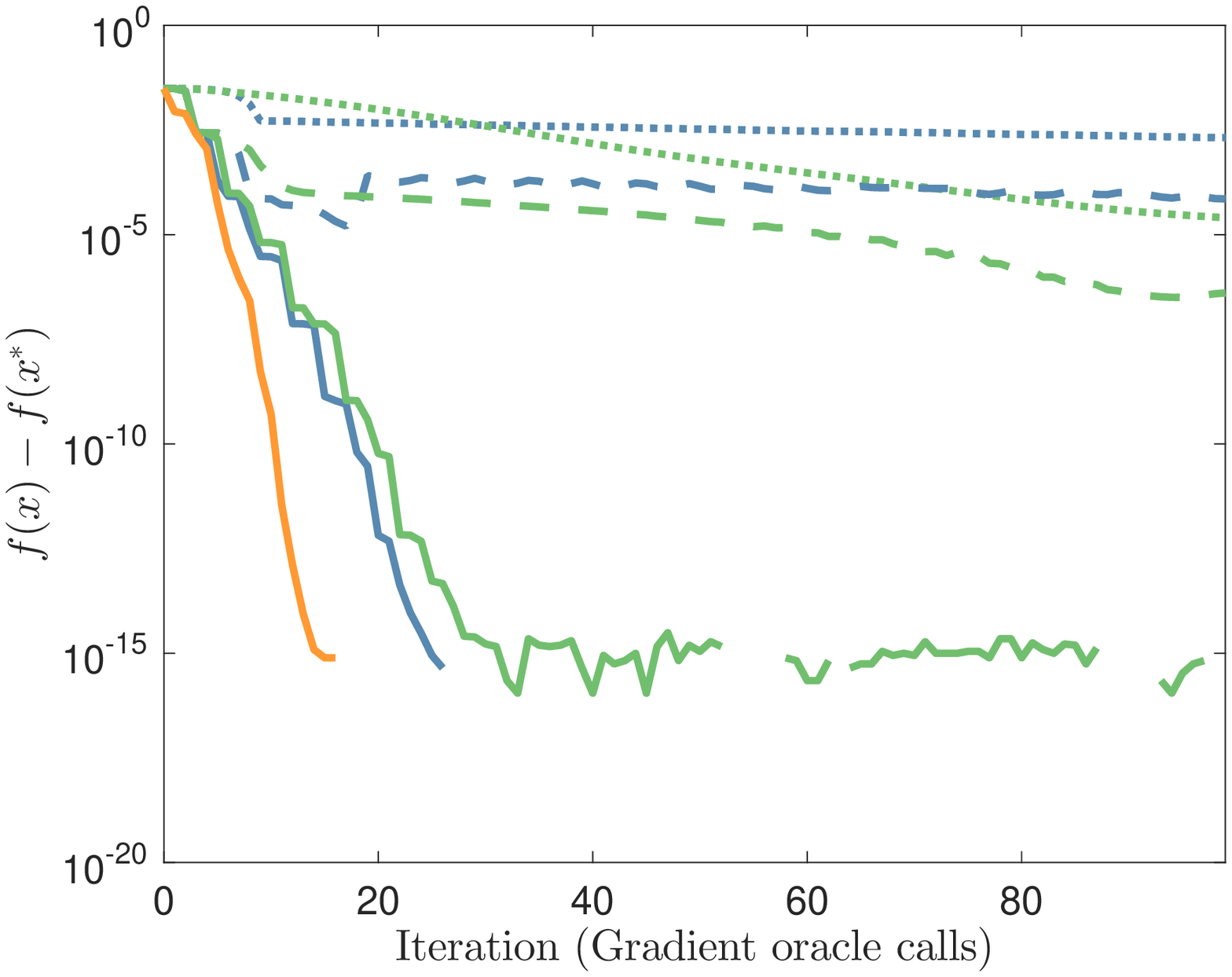}
\includegraphics[width=0.4\textwidth]{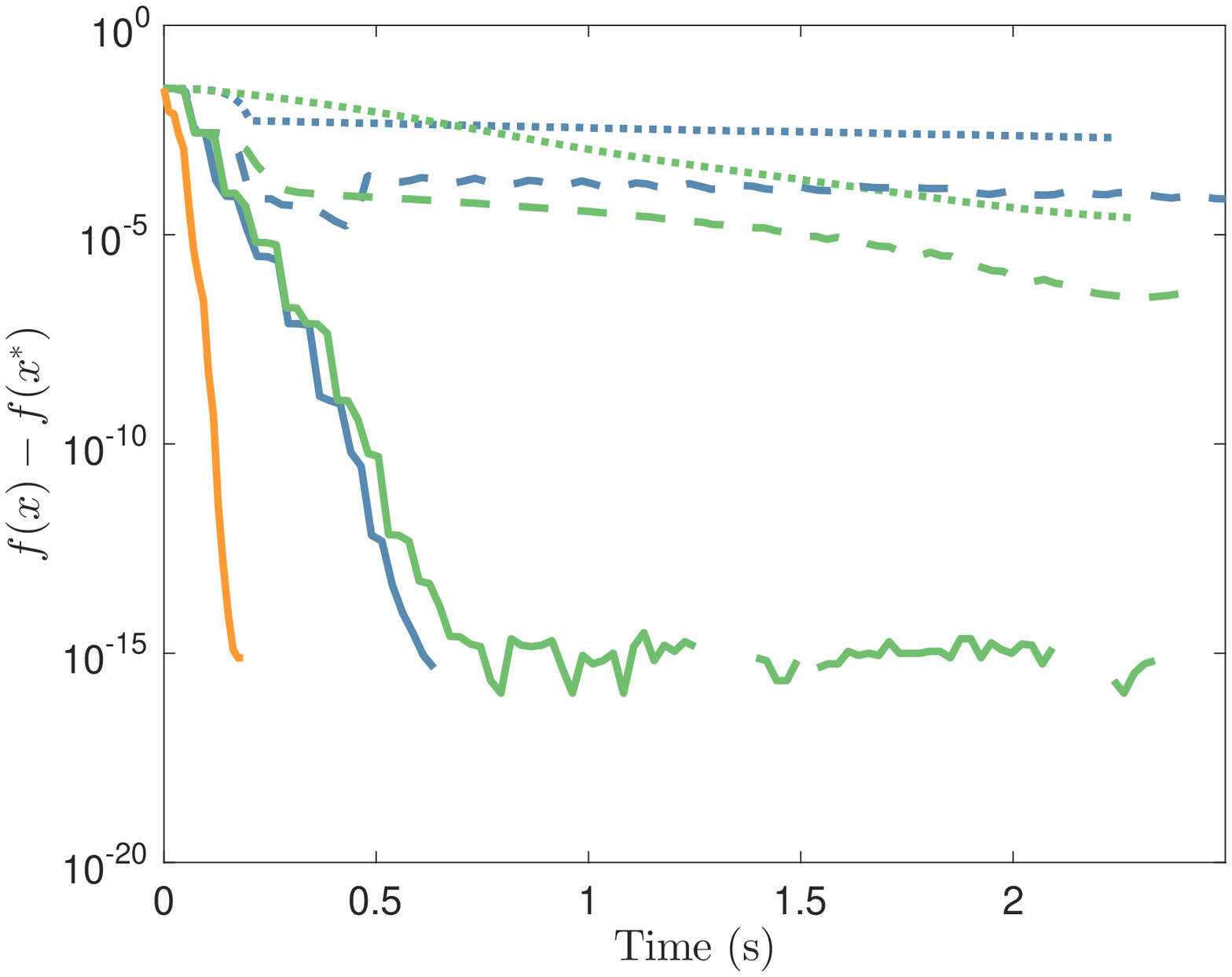}
\end{figure}
\begin{figure}[ht]
\centering
\includegraphics[width=0.4\textwidth]{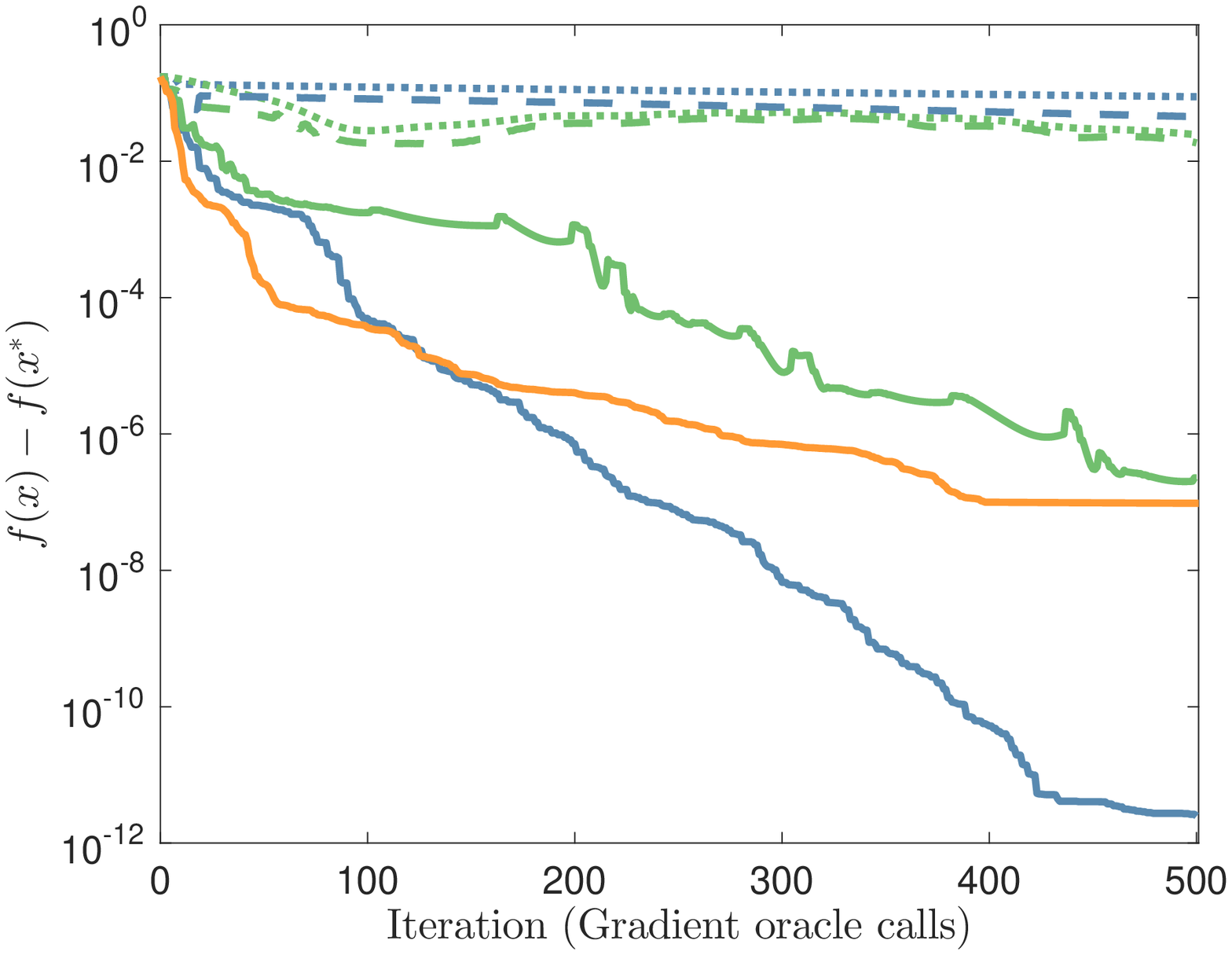}
\includegraphics[width=0.4\textwidth]{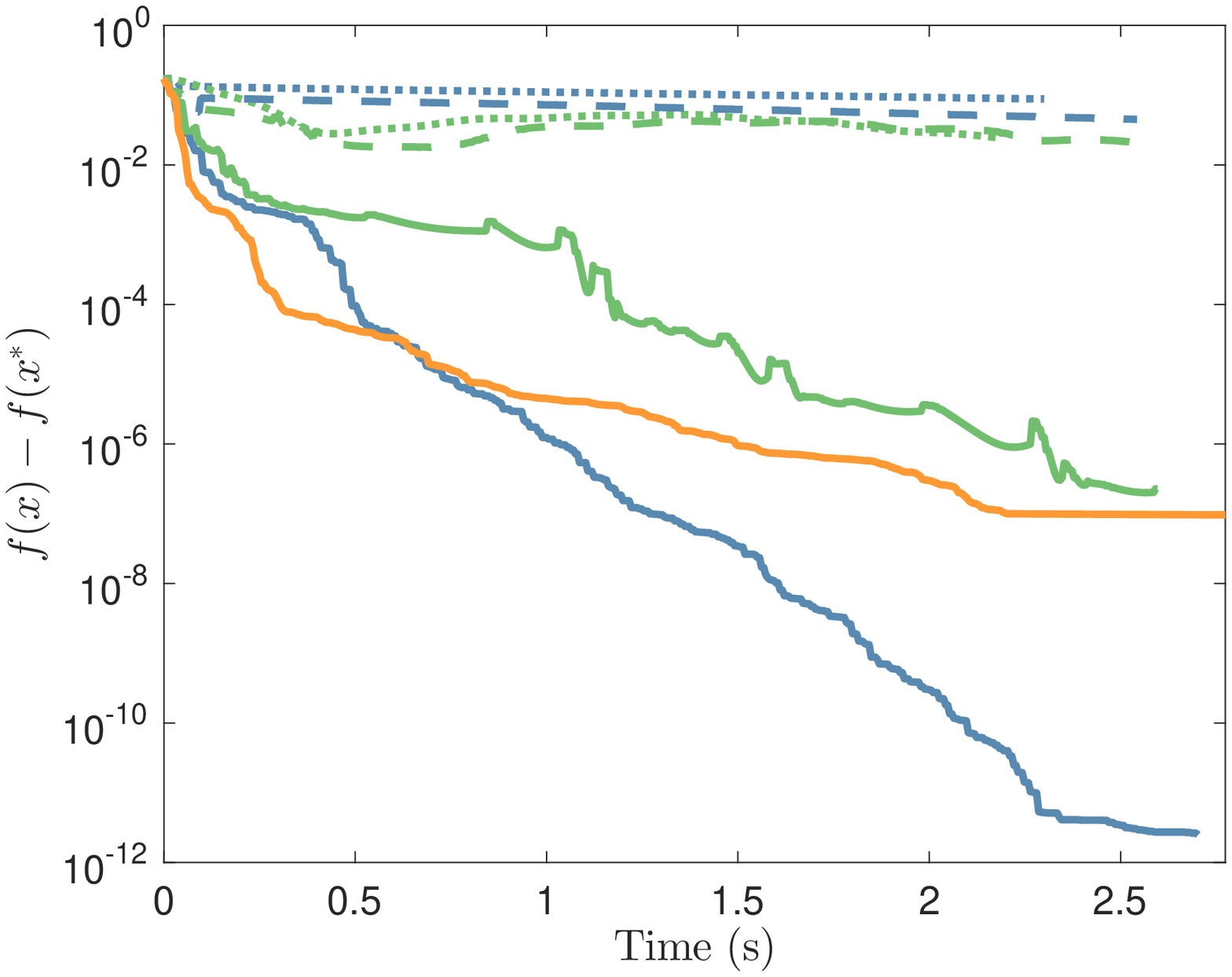}
\end{figure}
\begin{figure}[ht]
\centering
\includegraphics[width=0.4\textwidth]{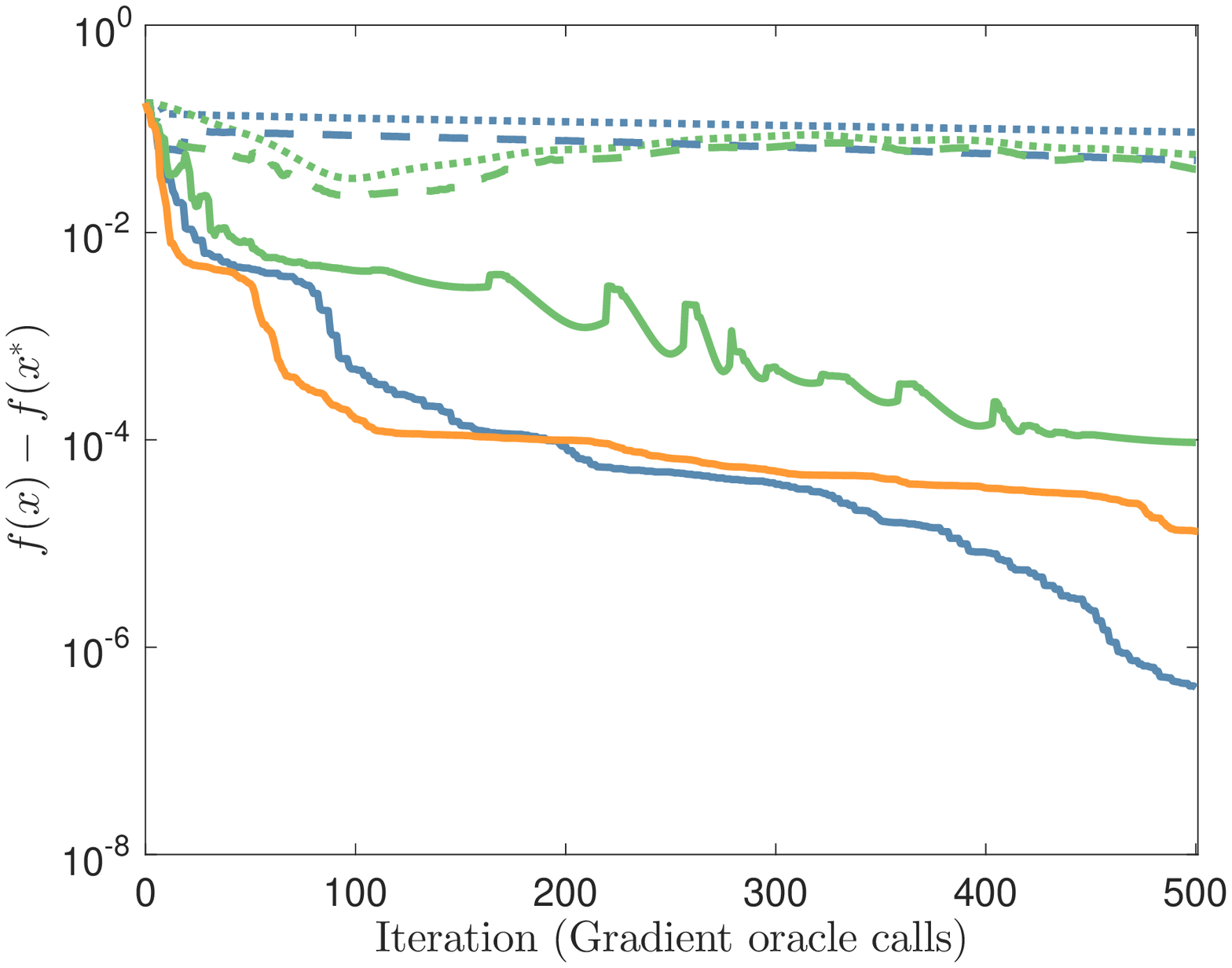}
\includegraphics[width=0.4\textwidth]{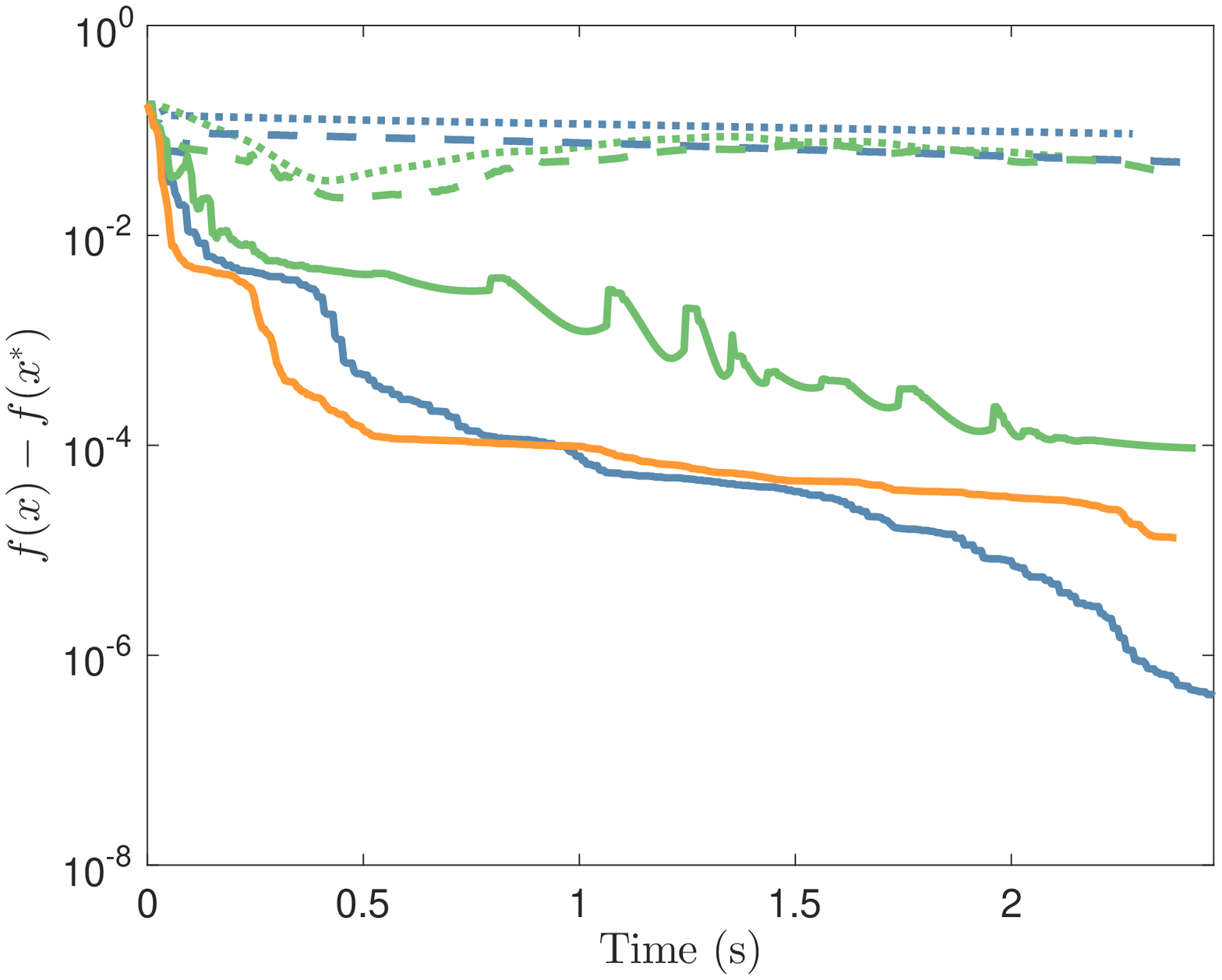}
\end{figure}
\clearpage
 
\subsubsection{Dataset: sido0 (Top to bottom: good, regular and bad condition number)}
\begin{figure}[ht]
\centering
\includegraphics[width=0.4\textwidth]{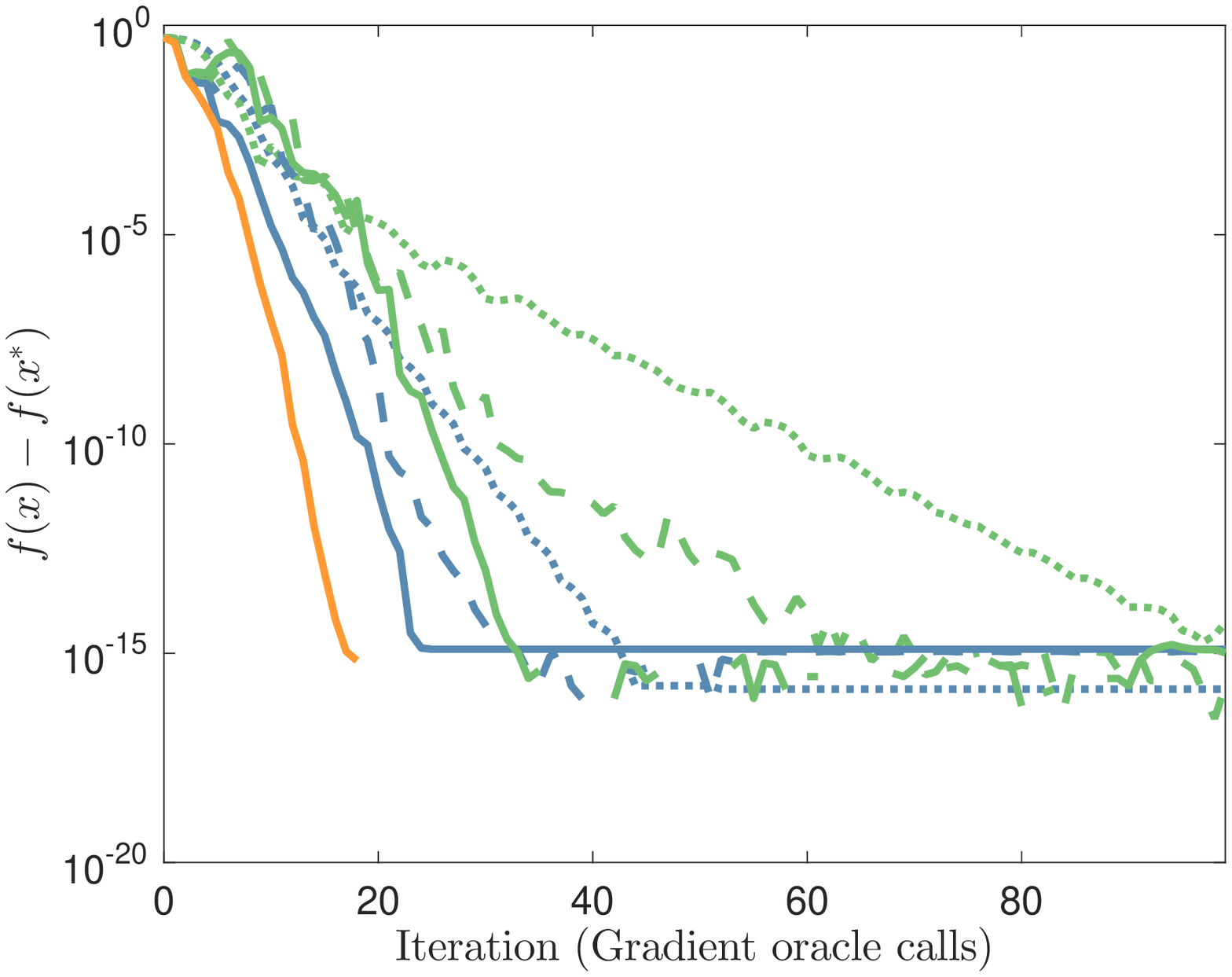}
\includegraphics[width=0.4\textwidth]{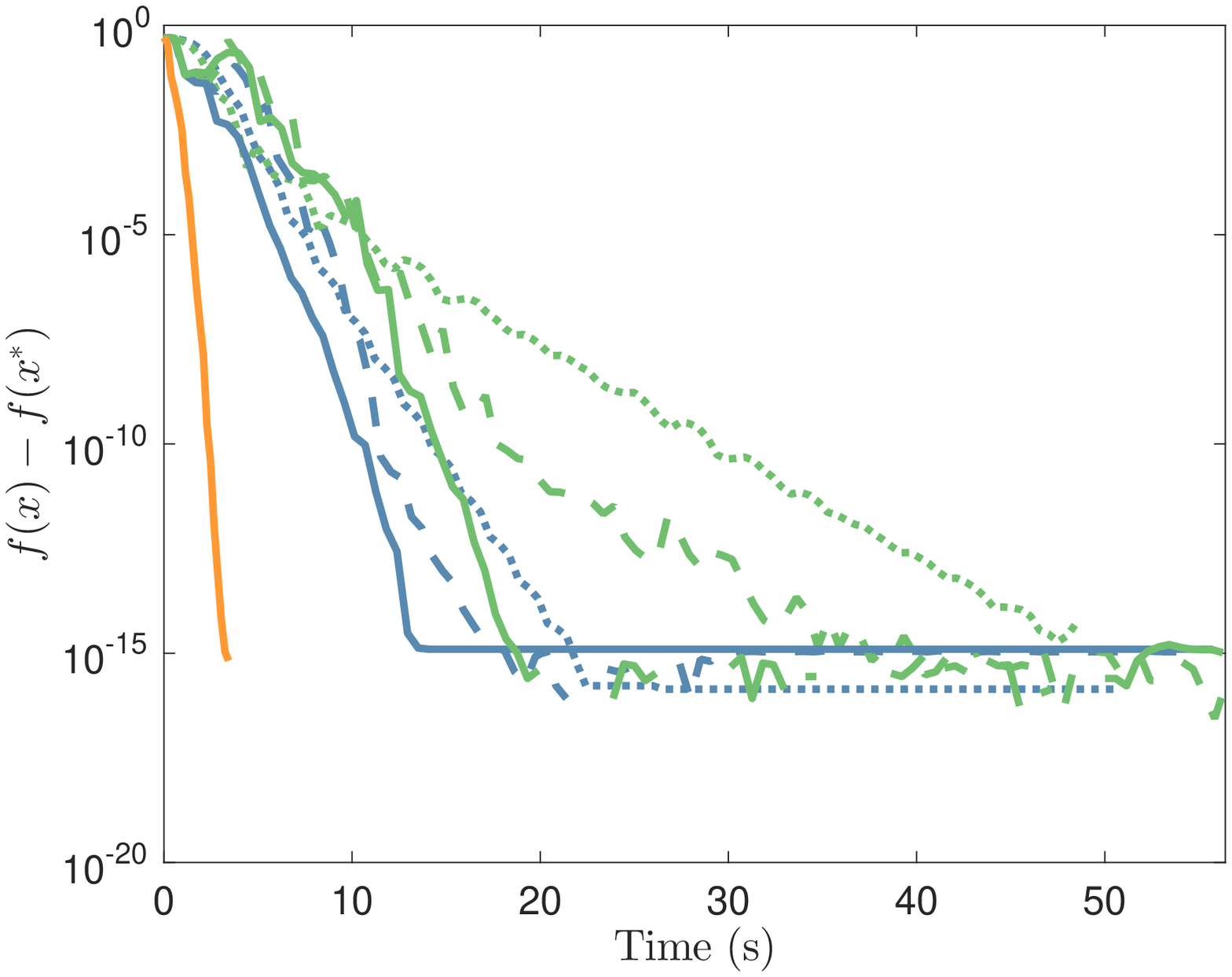}
\end{figure}
\begin{figure}[ht]
\centering
\includegraphics[width=0.4\textwidth]{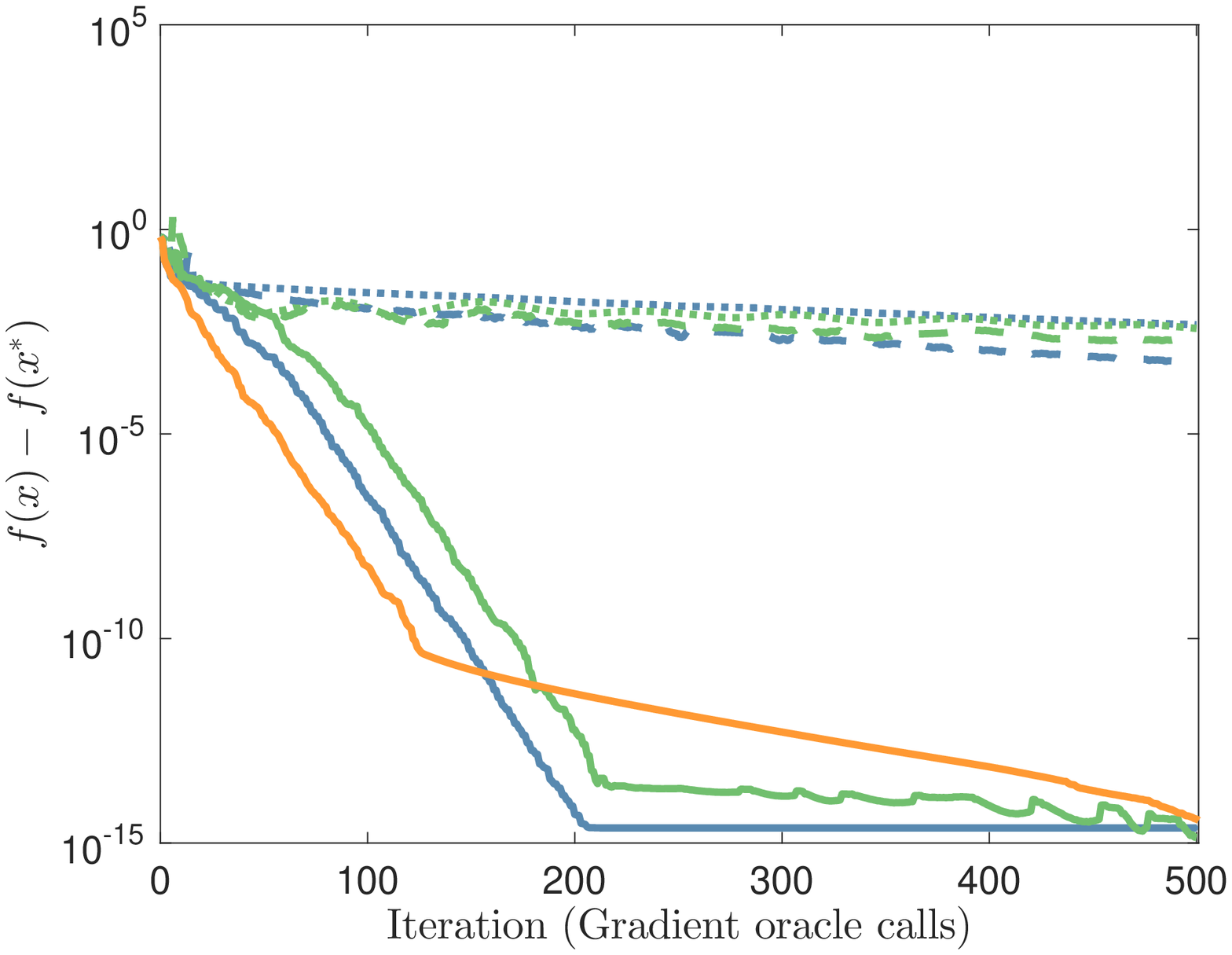}
\includegraphics[width=0.4\textwidth]{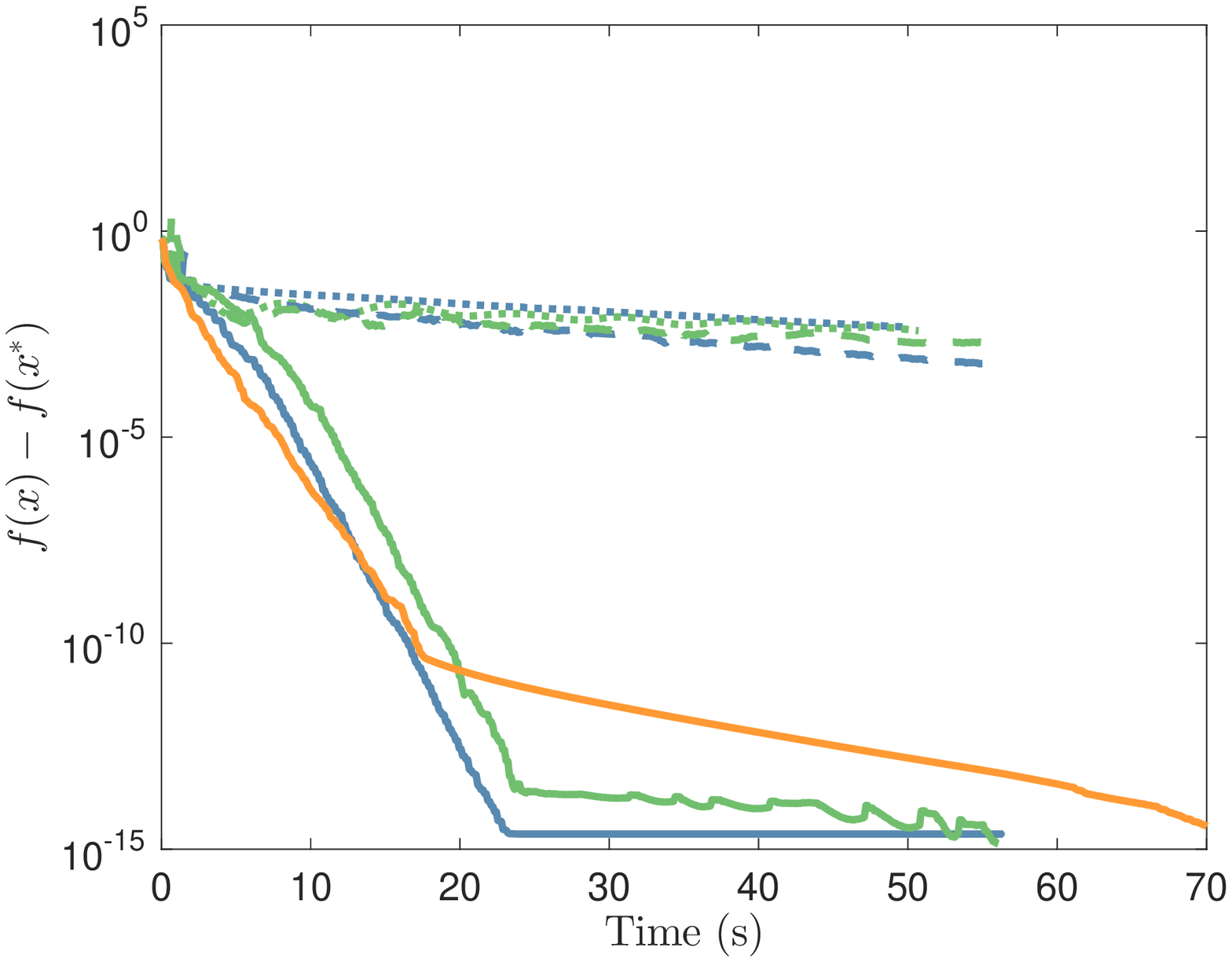}
\end{figure}
\begin{figure}[ht]
\centering
\includegraphics[width=0.4\textwidth]{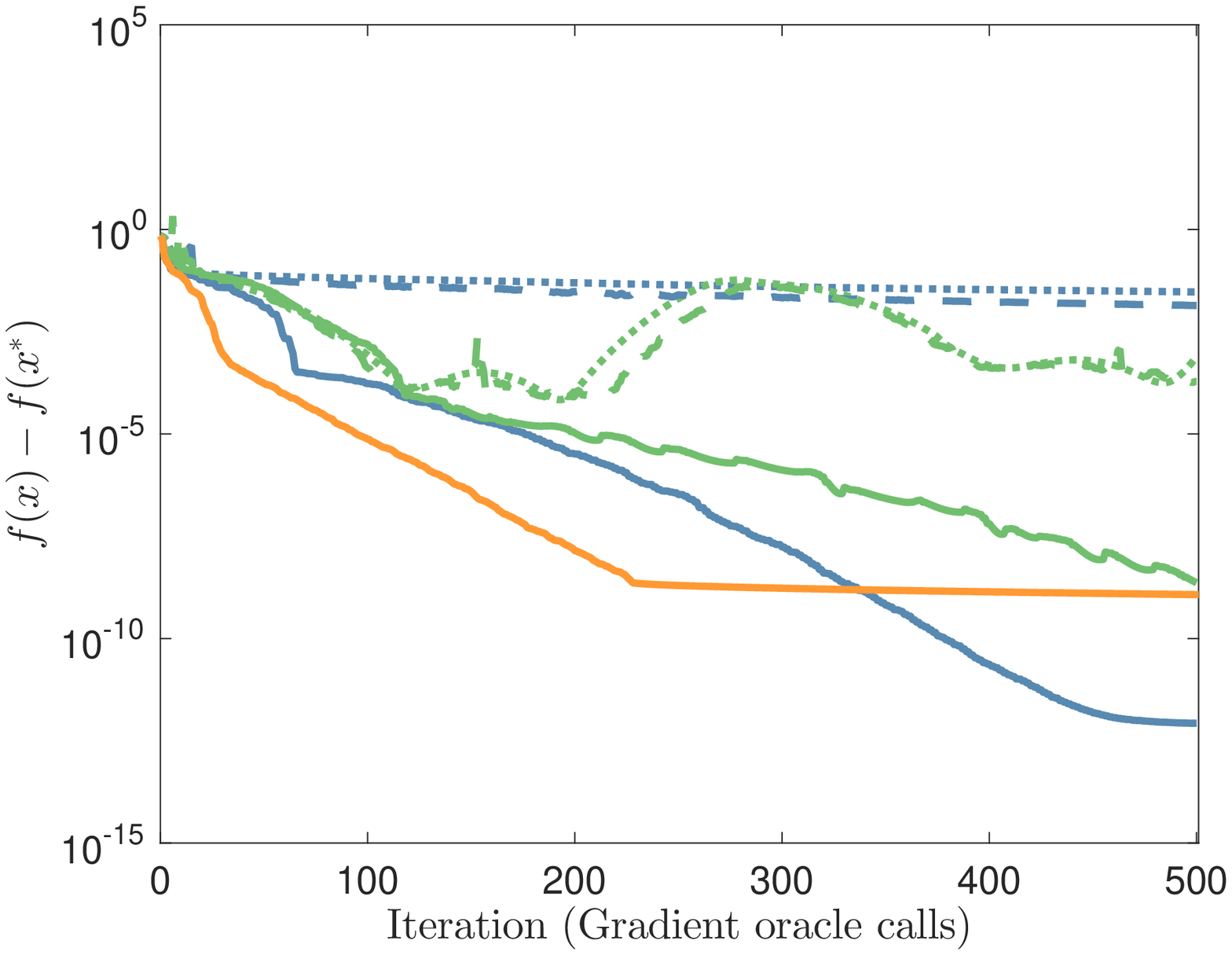}
\includegraphics[width=0.4\textwidth]{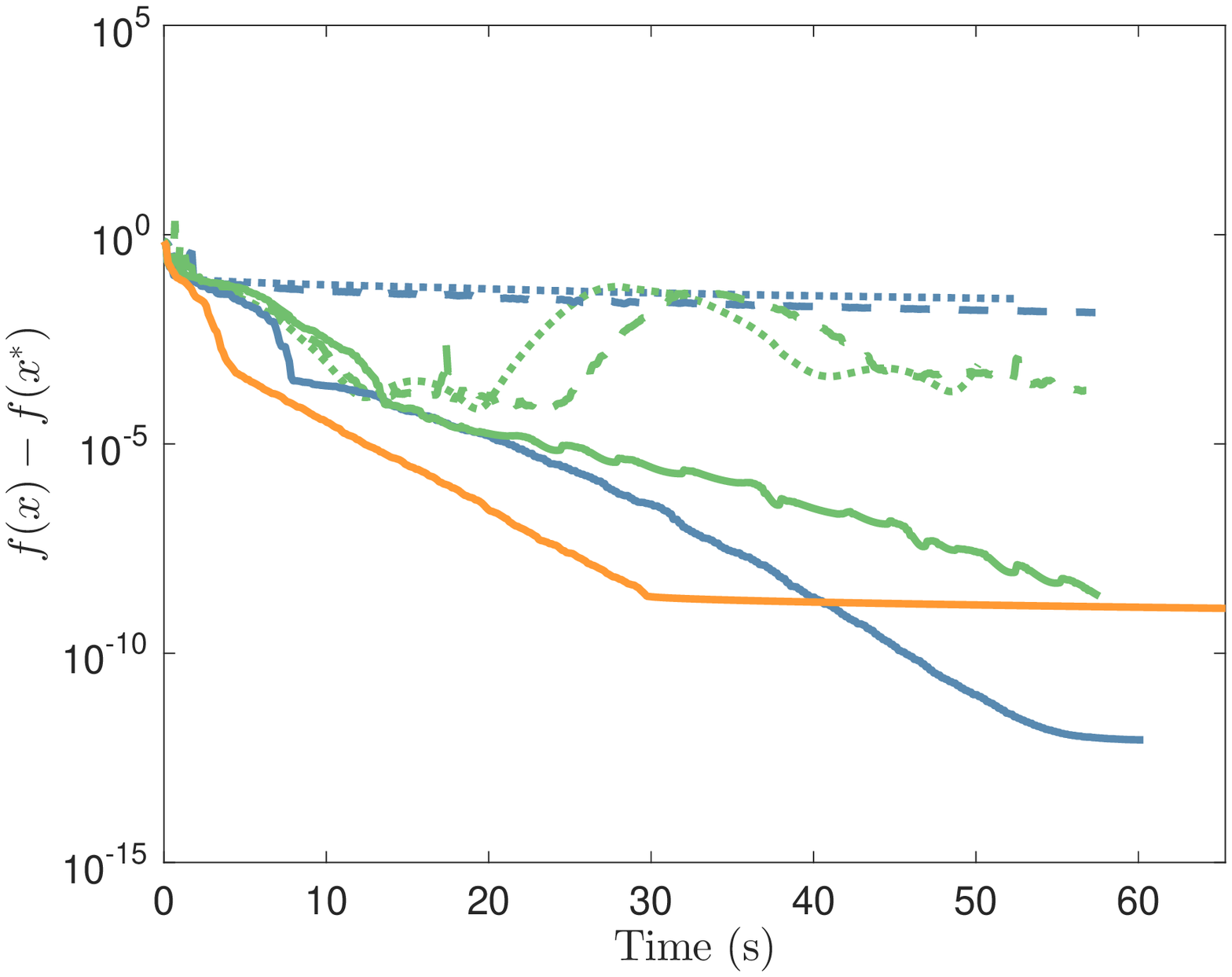}
\end{figure}

\clearpage
\subsection{Logistic regression, stochastic}
\label{sec:stoch}

\subsubsection{Dataset: sonar (Top to bottom: good, regular and bad condition number)}
\begin{figure}[ht]
\centering
\includegraphics[width=0.4\textwidth]{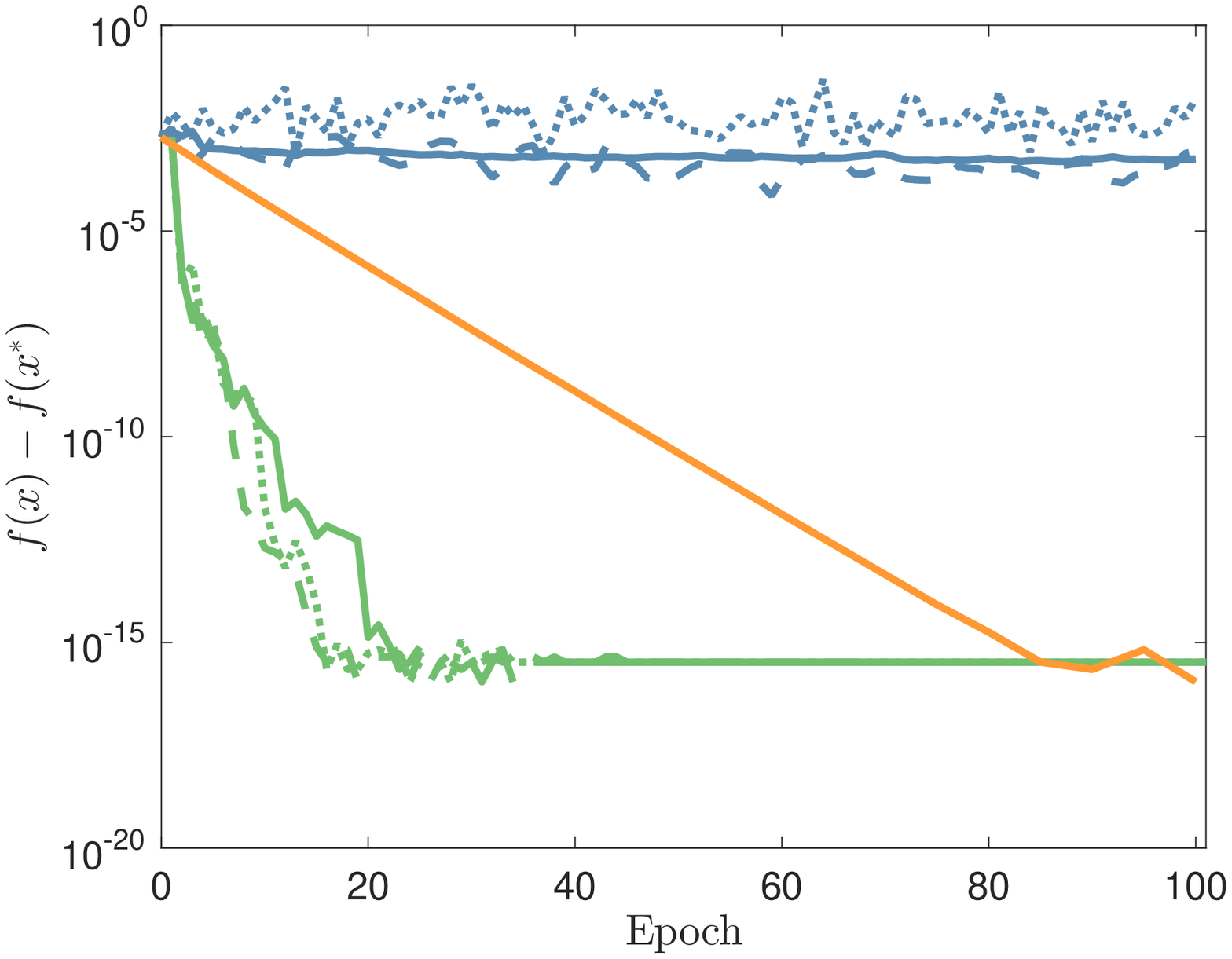}
\includegraphics[width=0.4\textwidth]{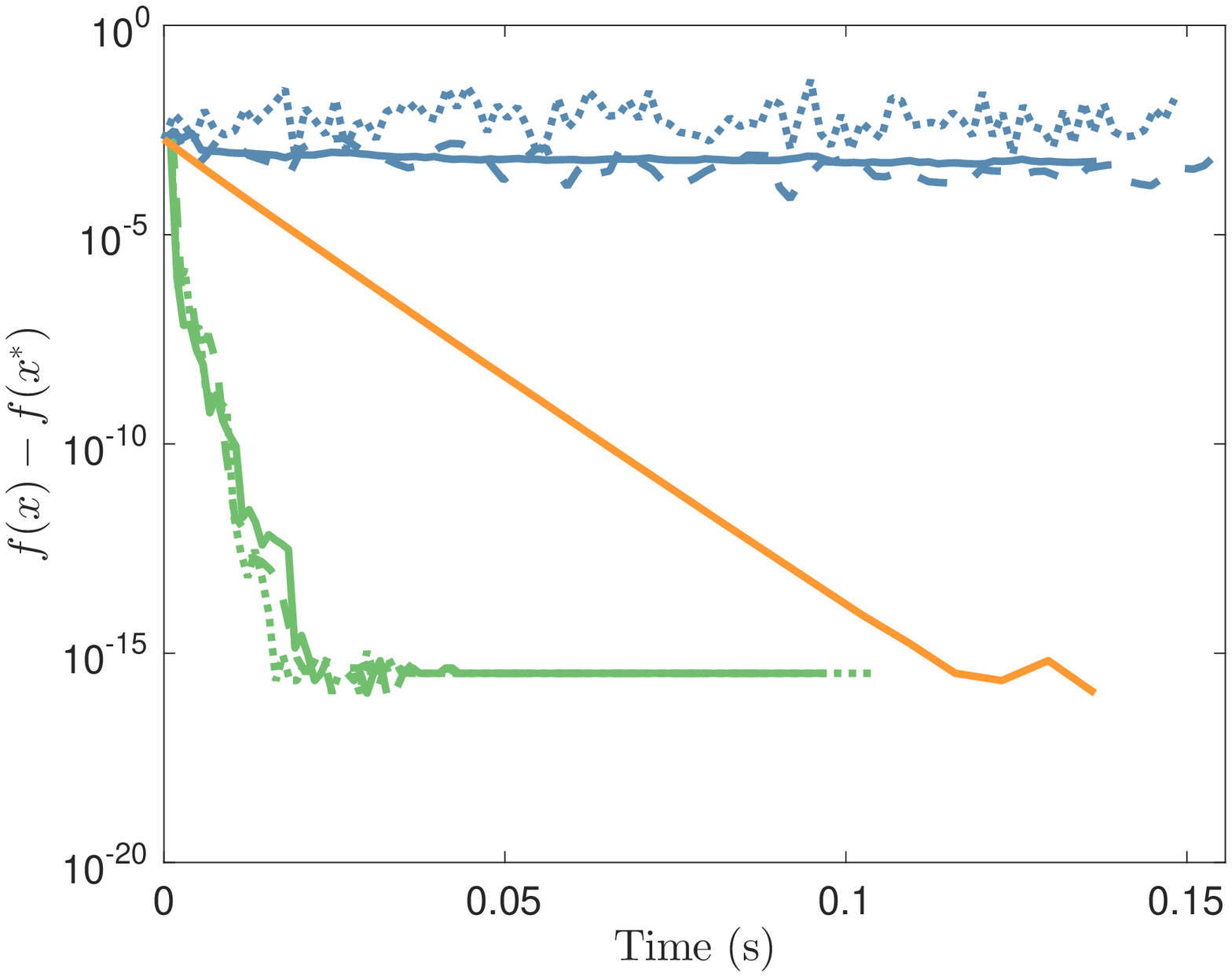}
\end{figure}
\begin{figure}[ht]
\centering
\includegraphics[width=0.4\textwidth]{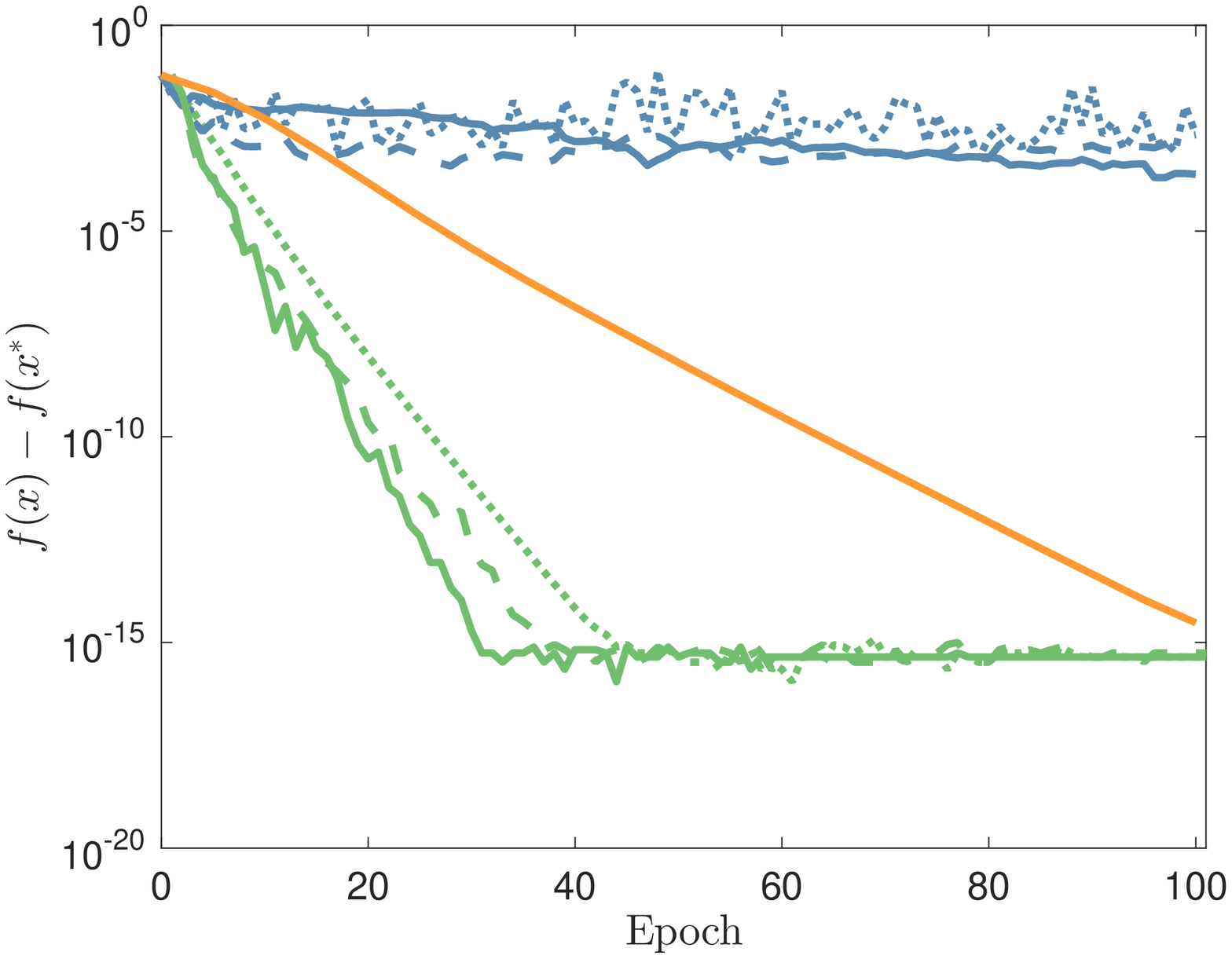}
\includegraphics[width=0.4\textwidth]{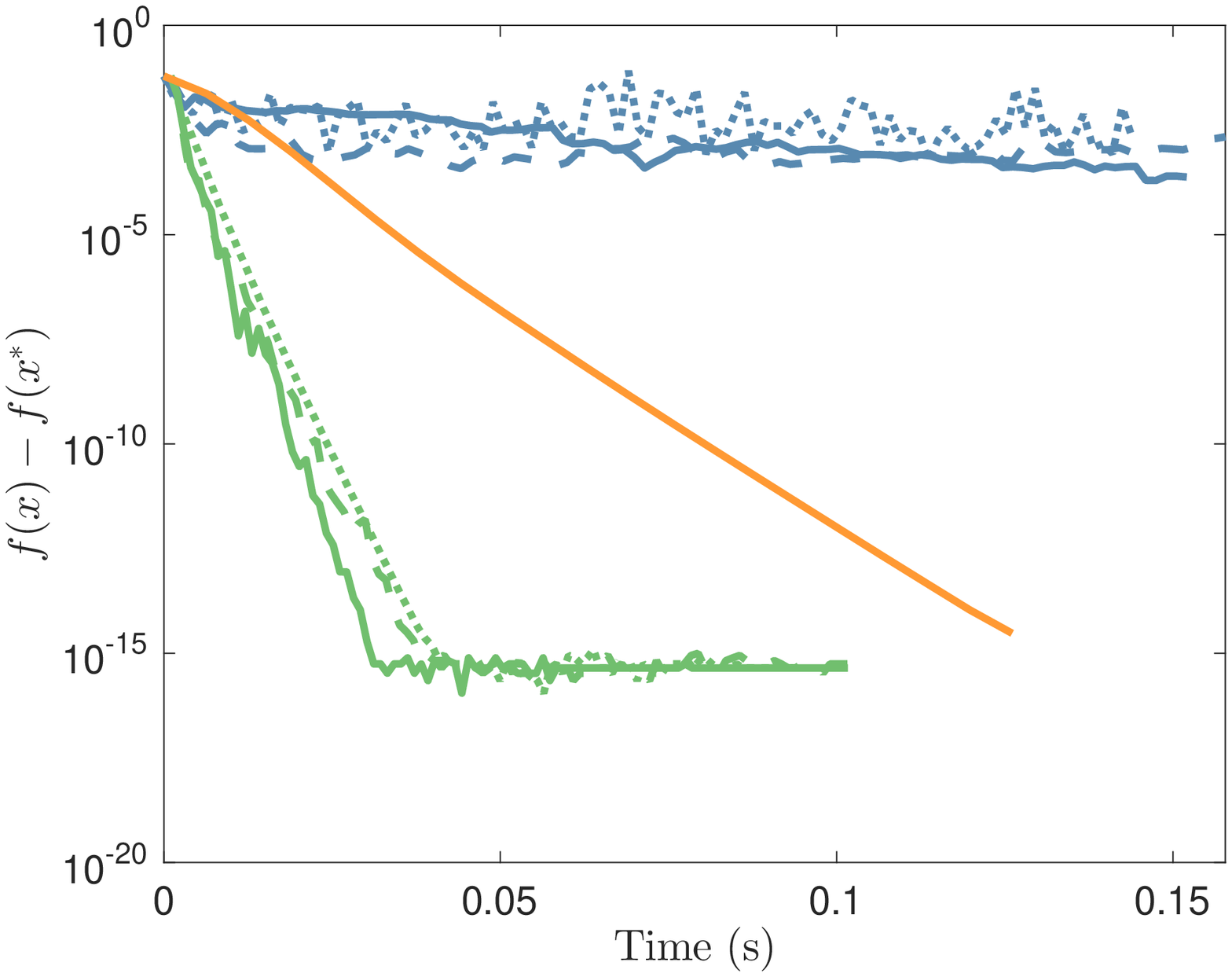}
\end{figure}
\begin{figure}[ht]
\centering
\includegraphics[width=0.4\textwidth]{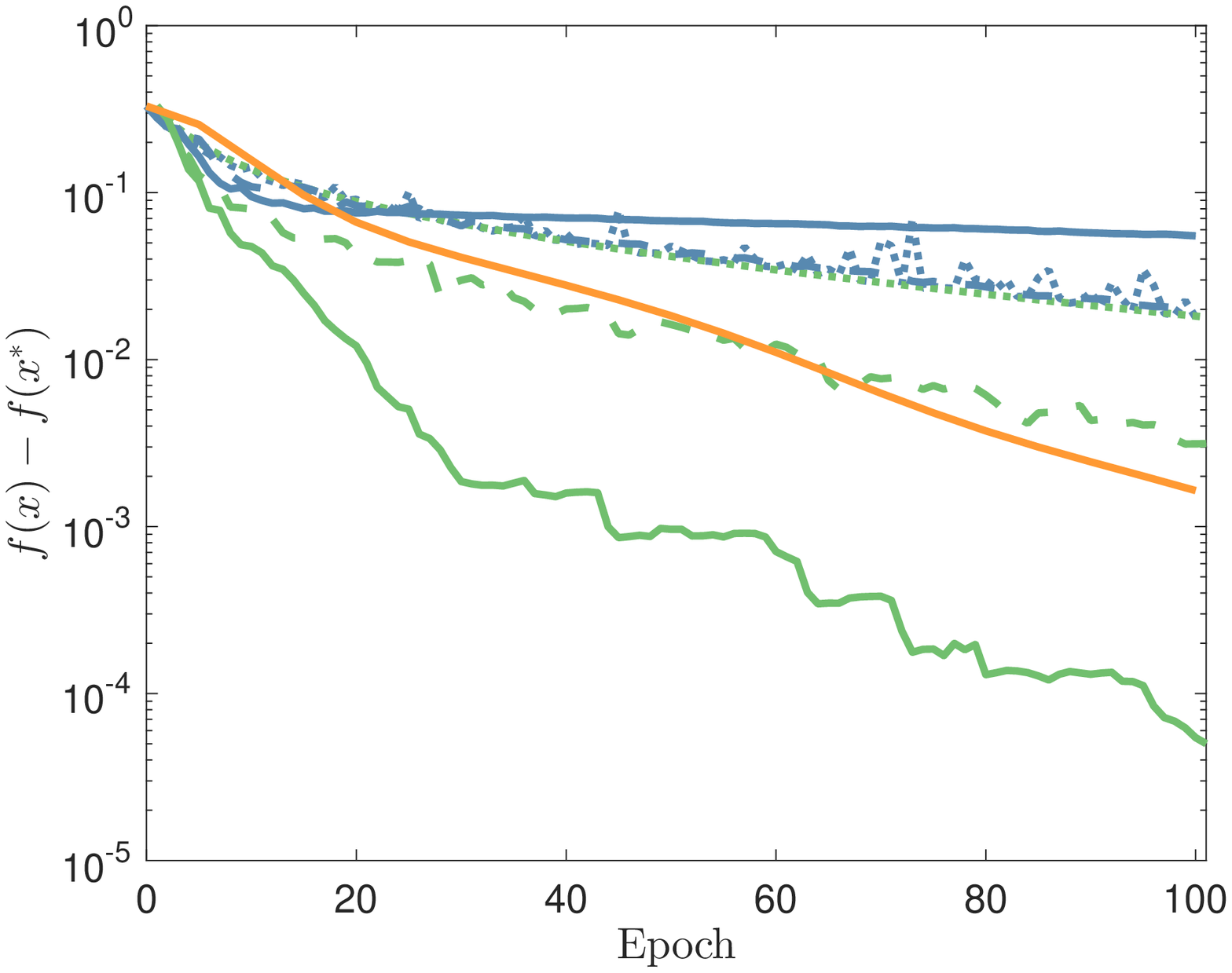}
\includegraphics[width=0.4\textwidth]{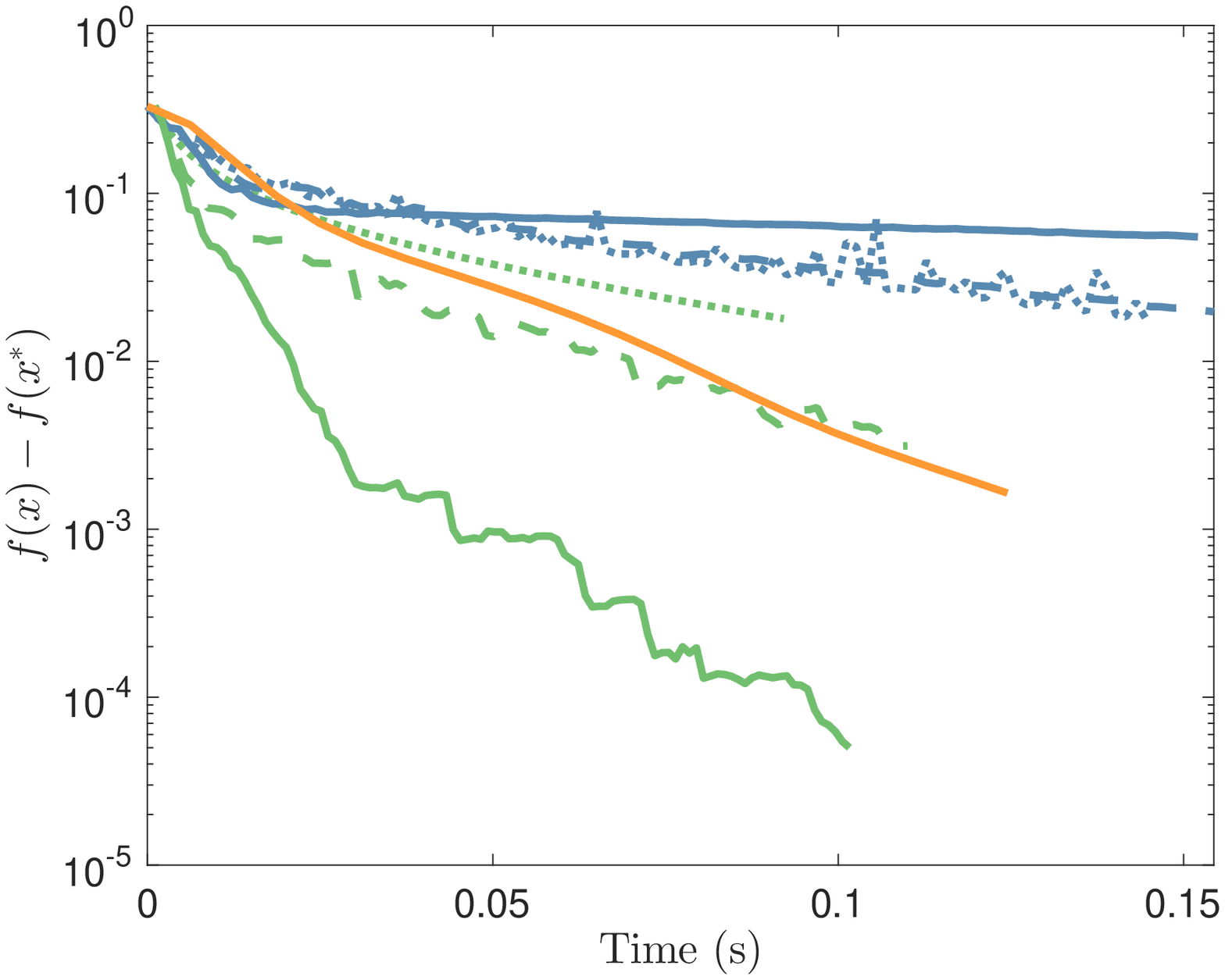}
\end{figure}
\clearpage
 
\subsubsection{Dataset: madelon (Top to bottom: good, regular and bad condition number)}
\begin{figure}[ht]
\centering
\includegraphics[width=0.4\textwidth]{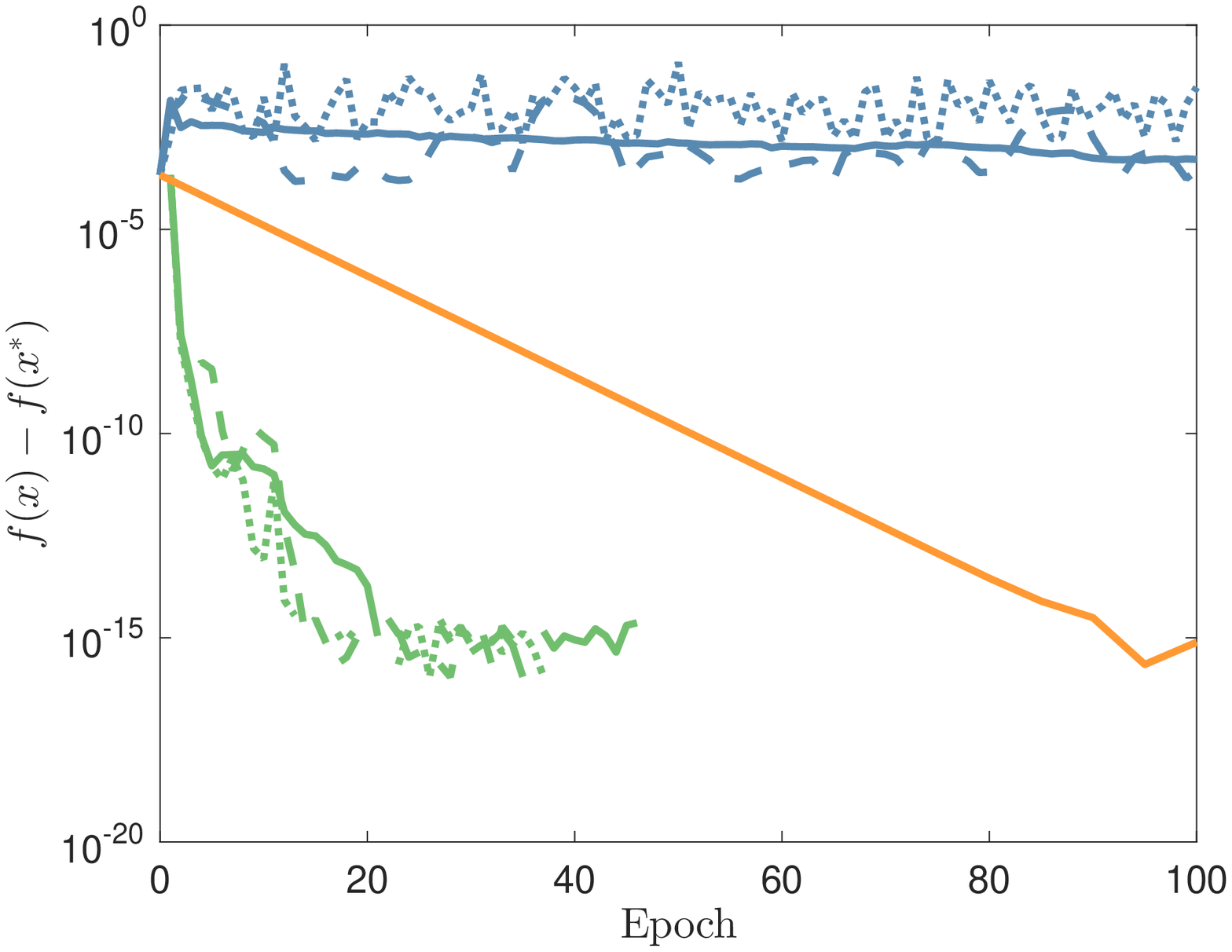}
\includegraphics[width=0.4\textwidth]{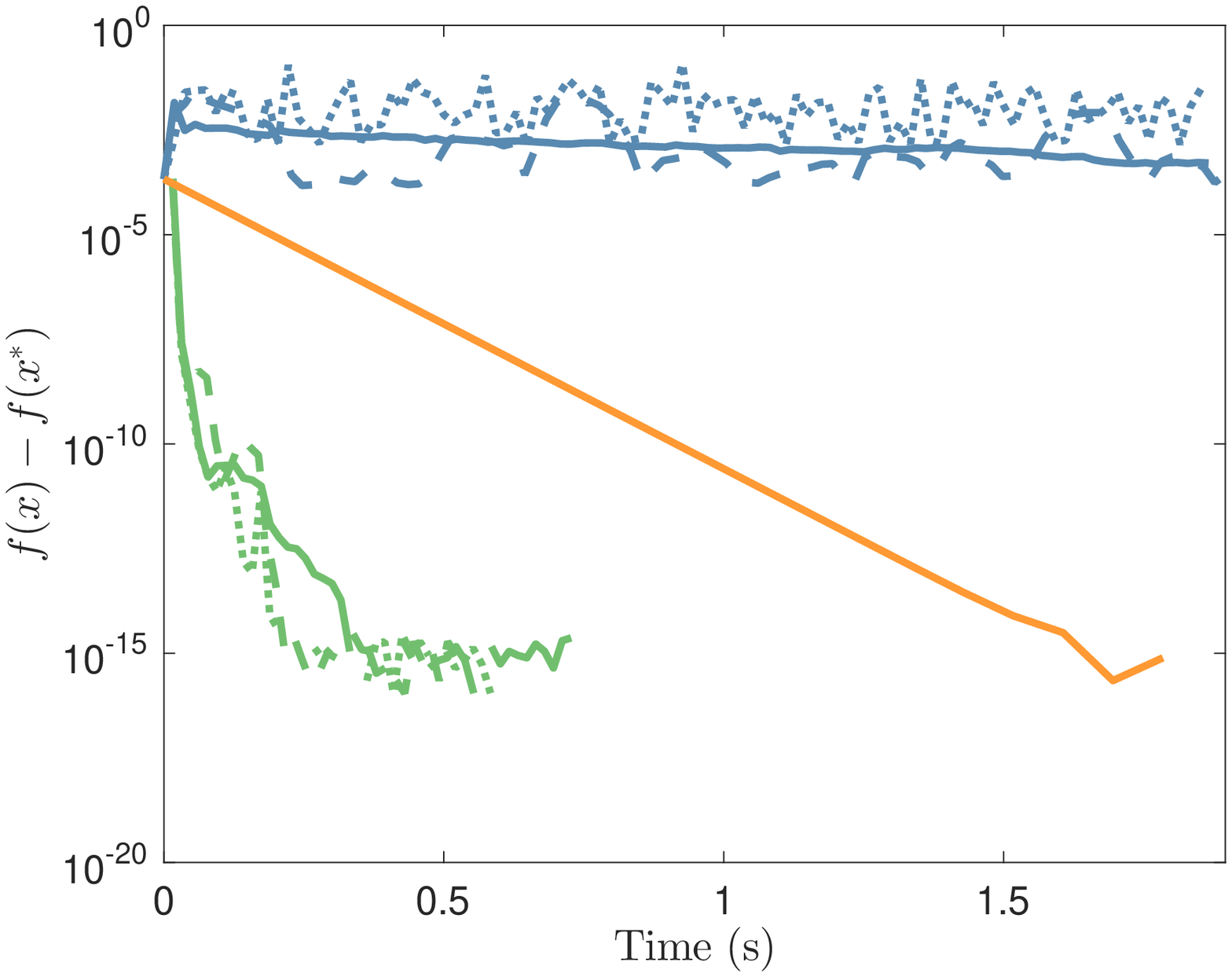}
\end{figure}
\begin{figure}[ht]
\centering
\includegraphics[width=0.4\textwidth]{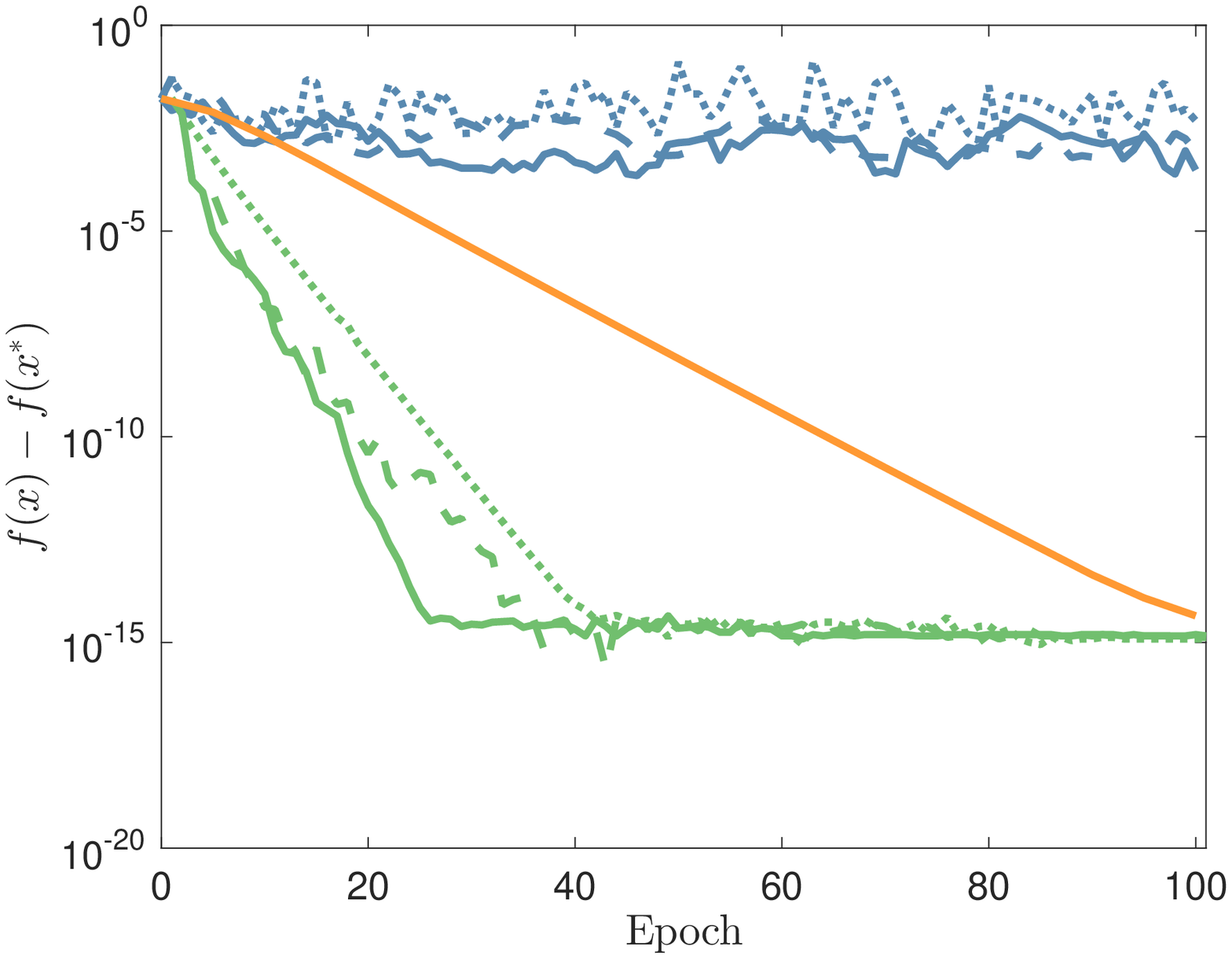}
\includegraphics[width=0.4\textwidth]{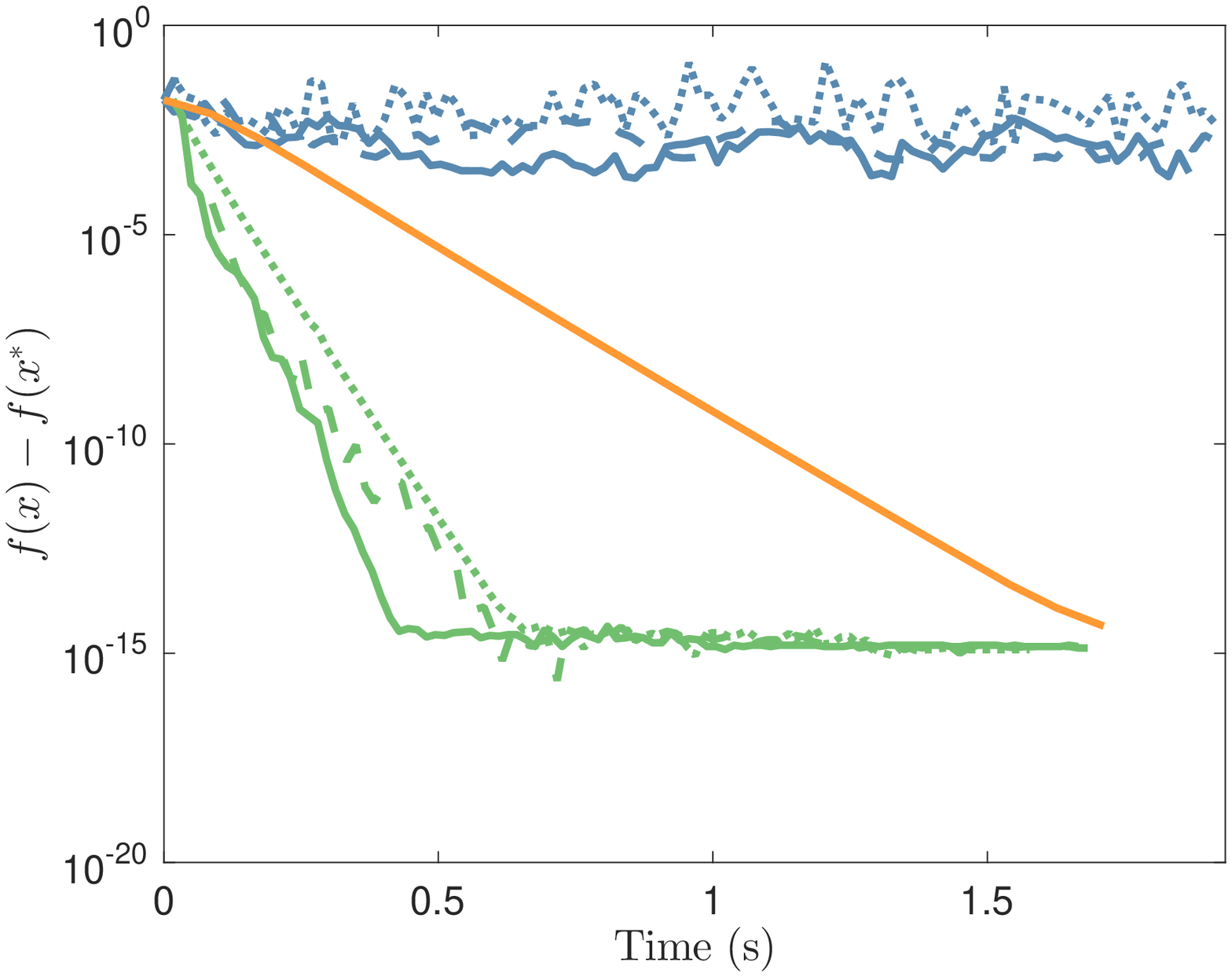}
\end{figure}
\begin{figure}[ht]
\centering
\includegraphics[width=0.4\textwidth]{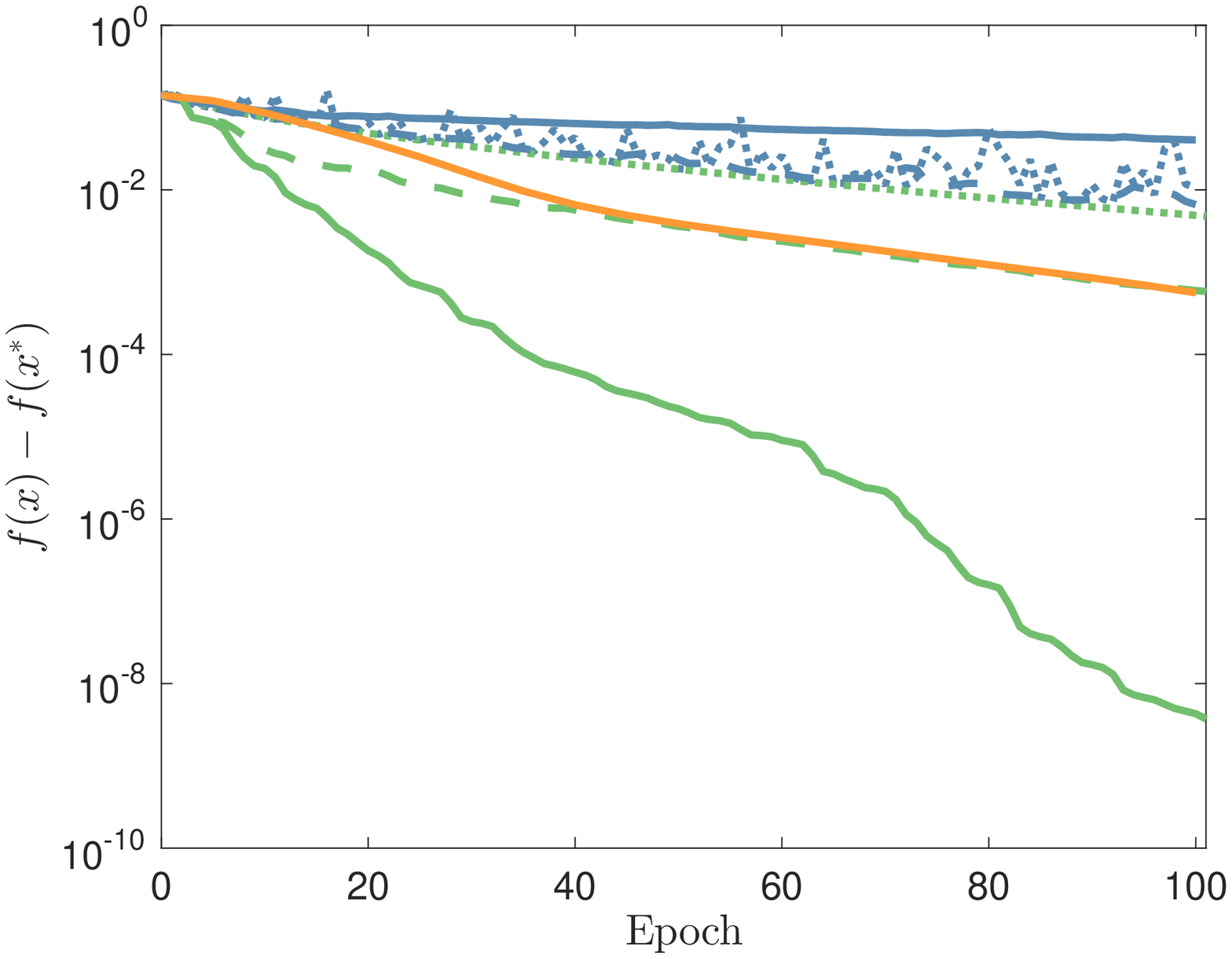}
\includegraphics[width=0.4\textwidth]{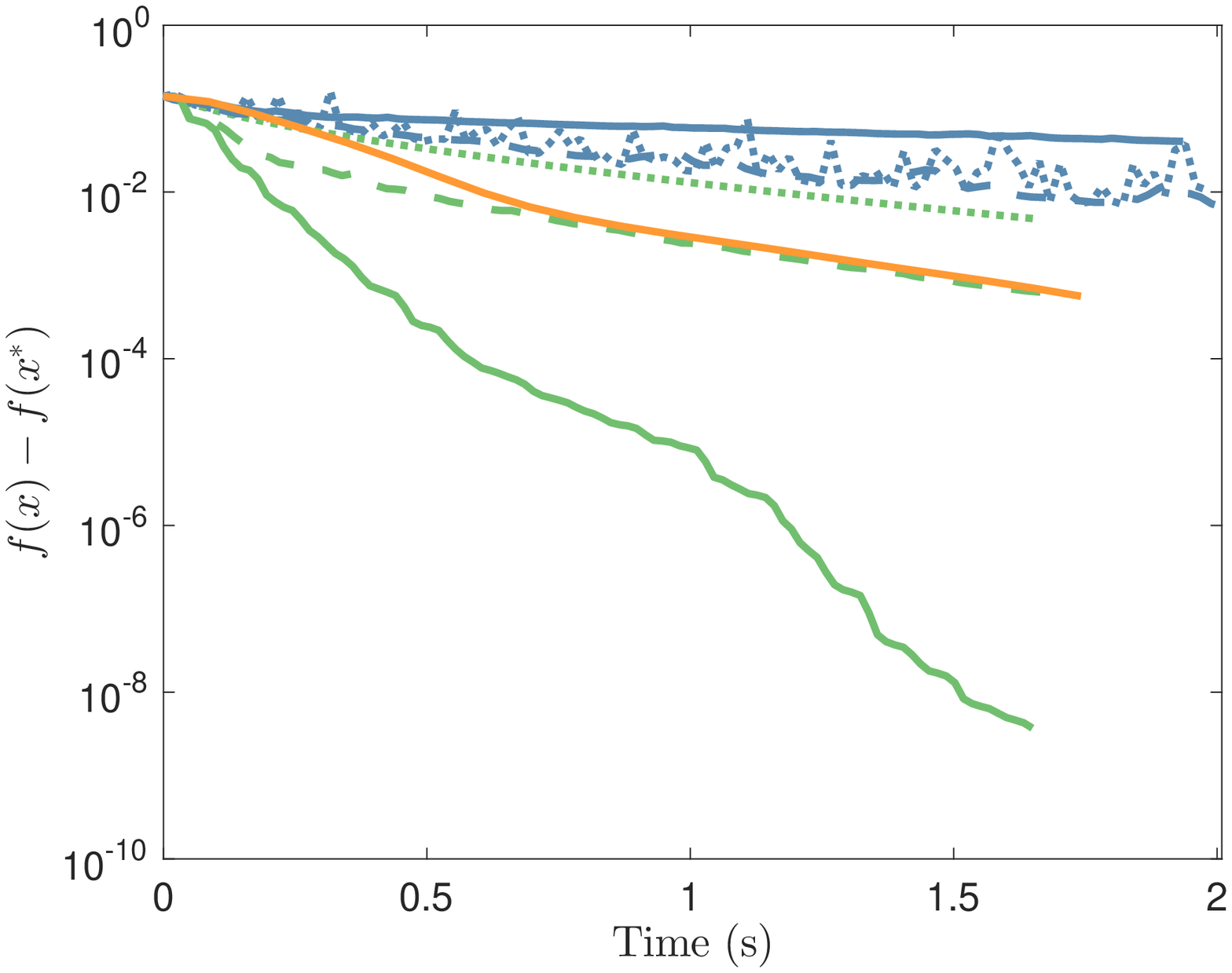}
\end{figure}
\clearpage
 
\subsubsection{Dataset: sido0 (Top to bottom: good, regular and bad condition number)}
\begin{figure}[ht]
\centering
\includegraphics[width=0.4\textwidth]{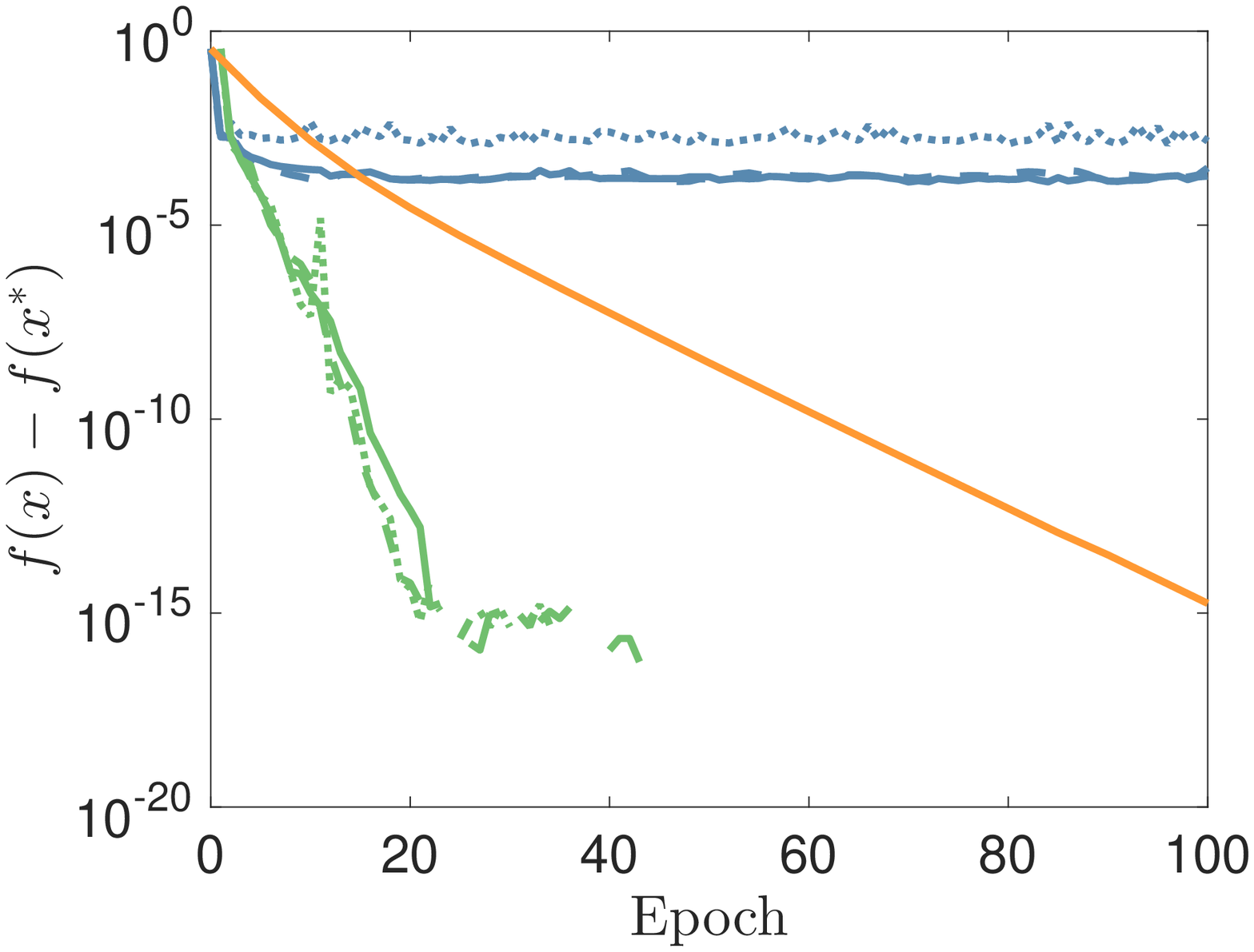}
\includegraphics[width=0.4\textwidth]{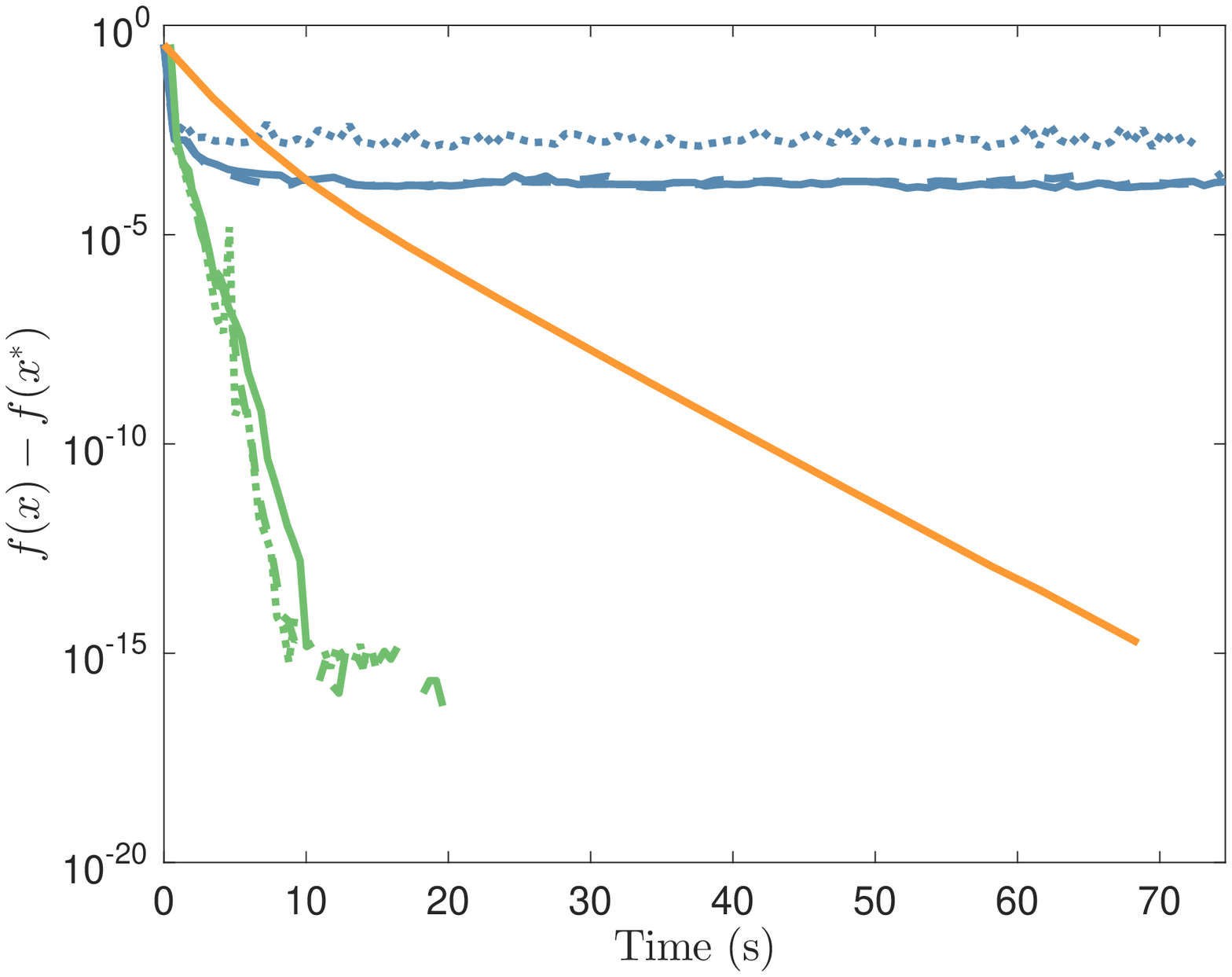}
\end{figure}
\begin{figure}[ht]
\centering
\includegraphics[width=0.4\textwidth]{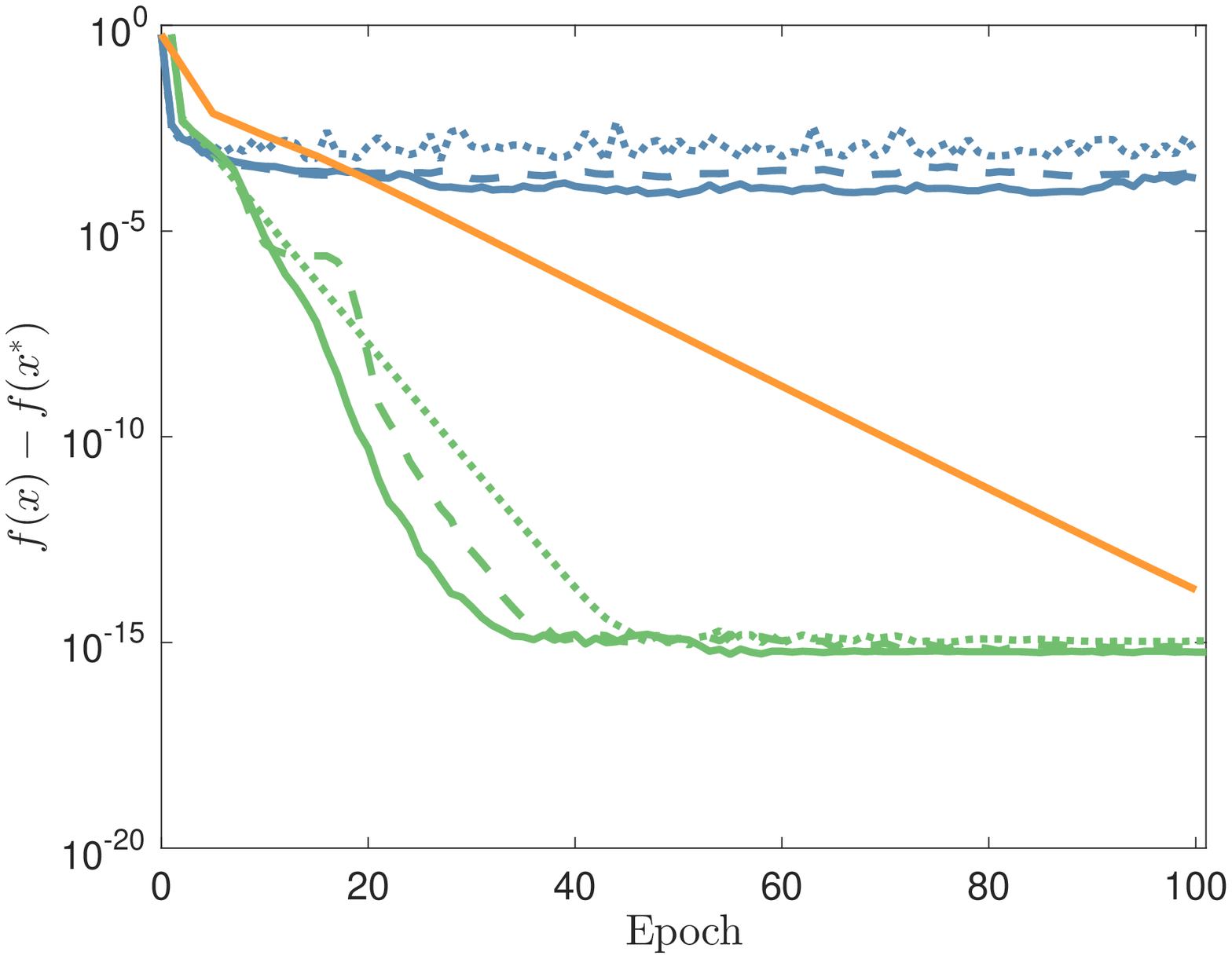}
\includegraphics[width=0.4\textwidth]{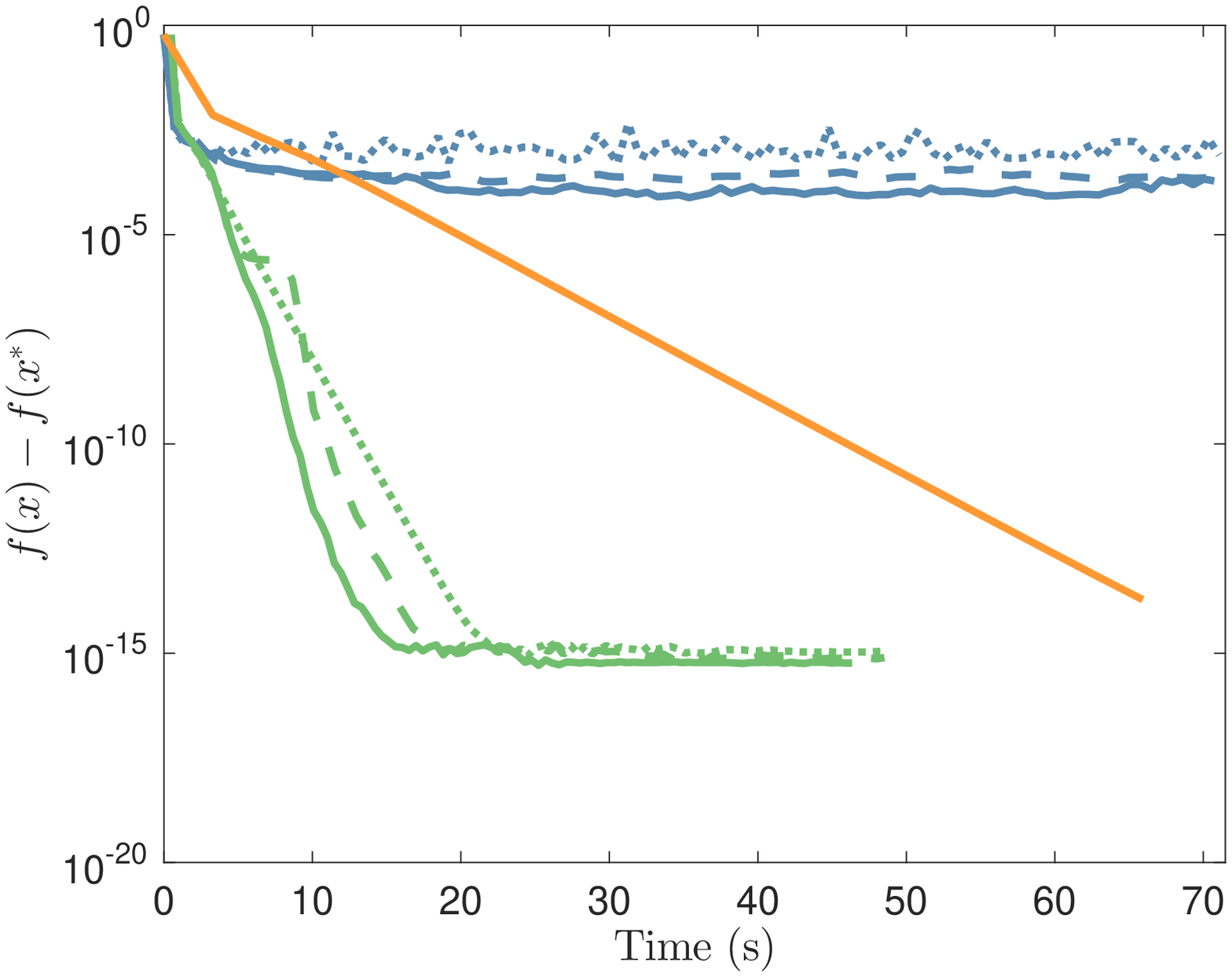}
\end{figure}
\begin{figure}[ht]
\centering
\includegraphics[width=0.4\textwidth]{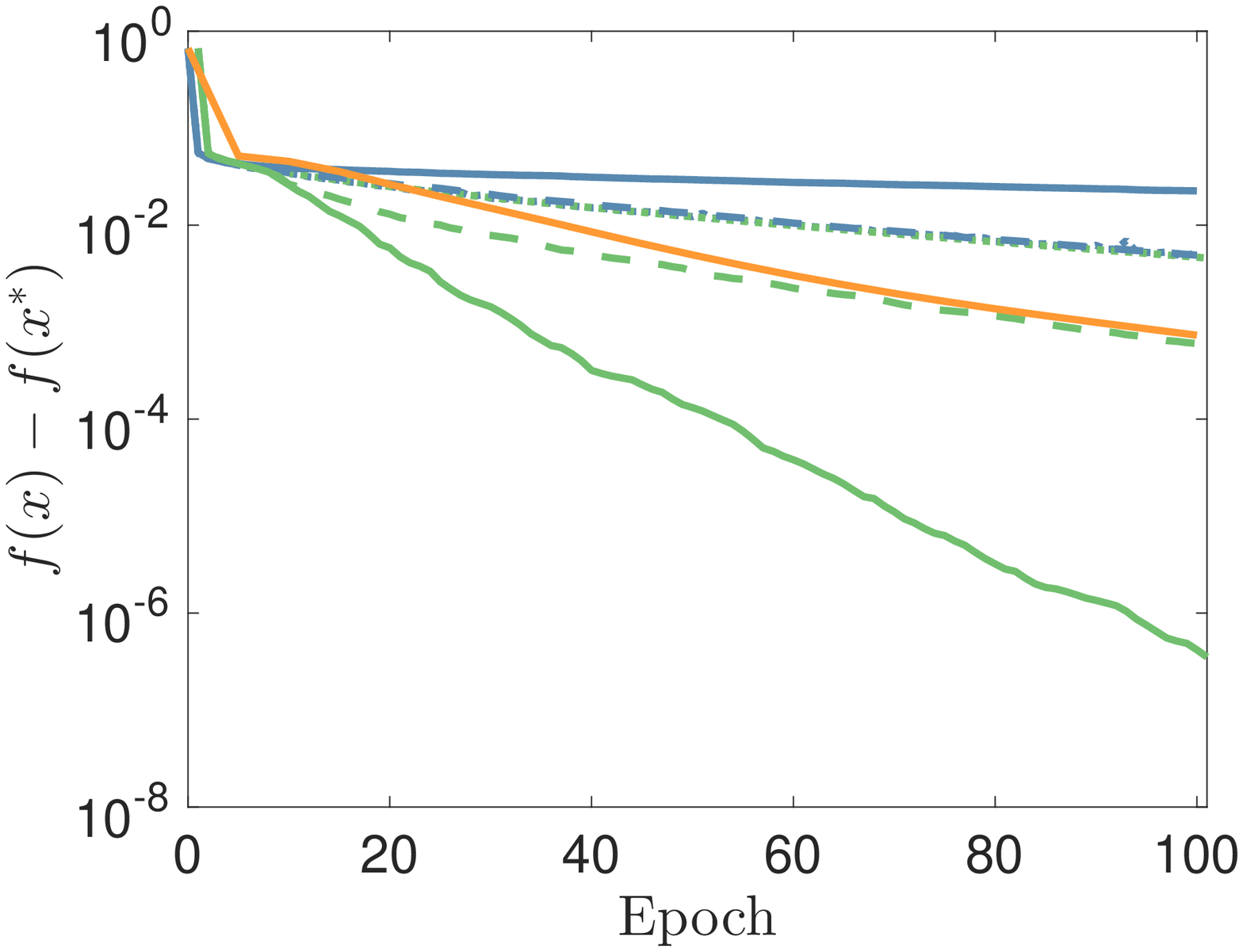}
\includegraphics[width=0.4\textwidth]{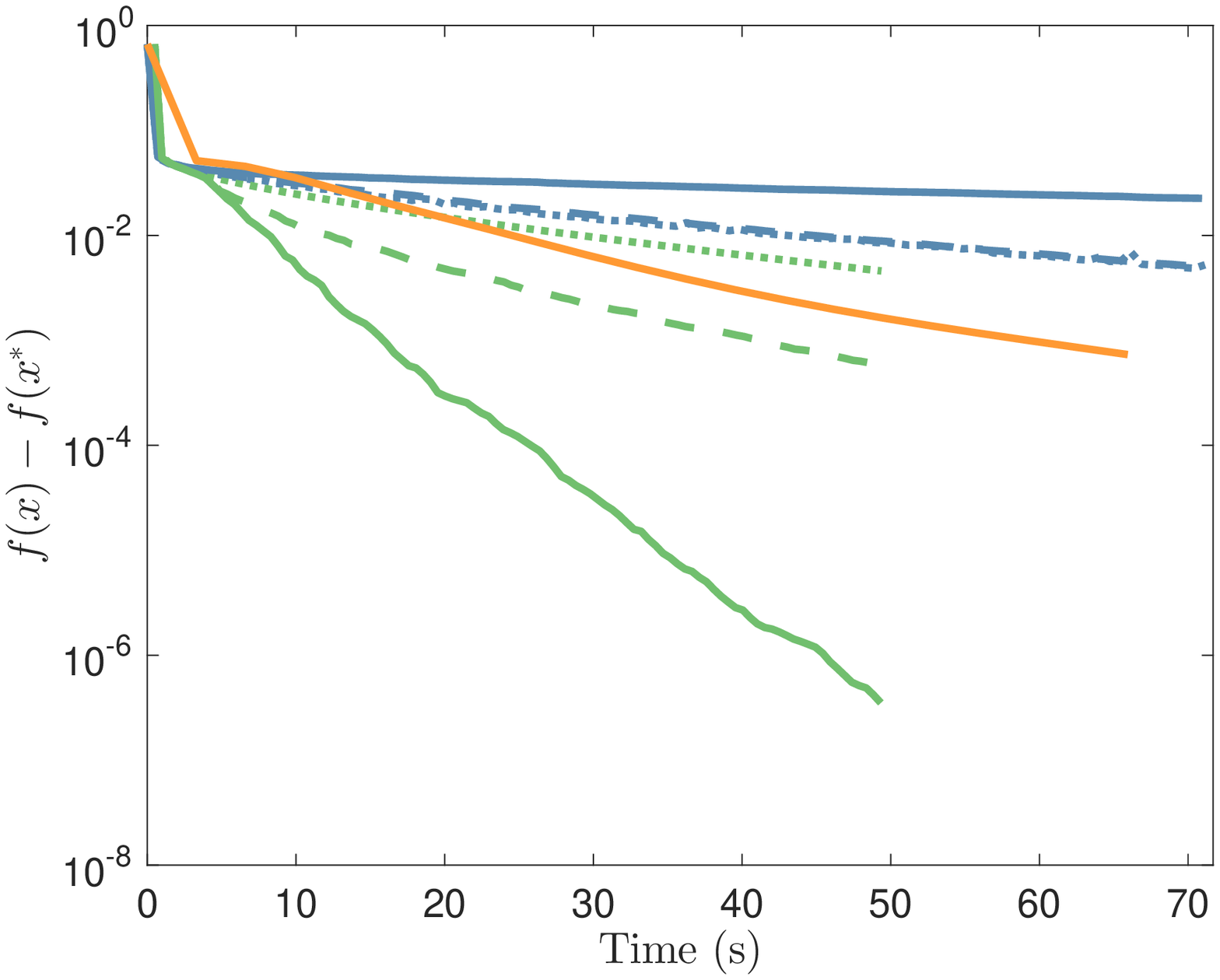}
\end{figure}

\clearpage
\section{Logistic Regression on MNIST Dataset}
\label{sec:mnist}

We now present our result for optimizing a logistic regression using the MNIST dataset. MNIST contains handwritten digits, and consists in 10 different classes of points, each containing 6000 samples with 784 features. We train each class separately, using a one-versus-all approach. Use compare plain gradient method with backtracking line-search with its accelerated version using online RNA (with $N=10$ and $\lambda = 10^{-8}$). In all cases, RNA outperform gradient descent and Nesterov's fast gradient.

\begin{figure}[ht]
\centering
\includegraphics[width=0.3\textwidth]{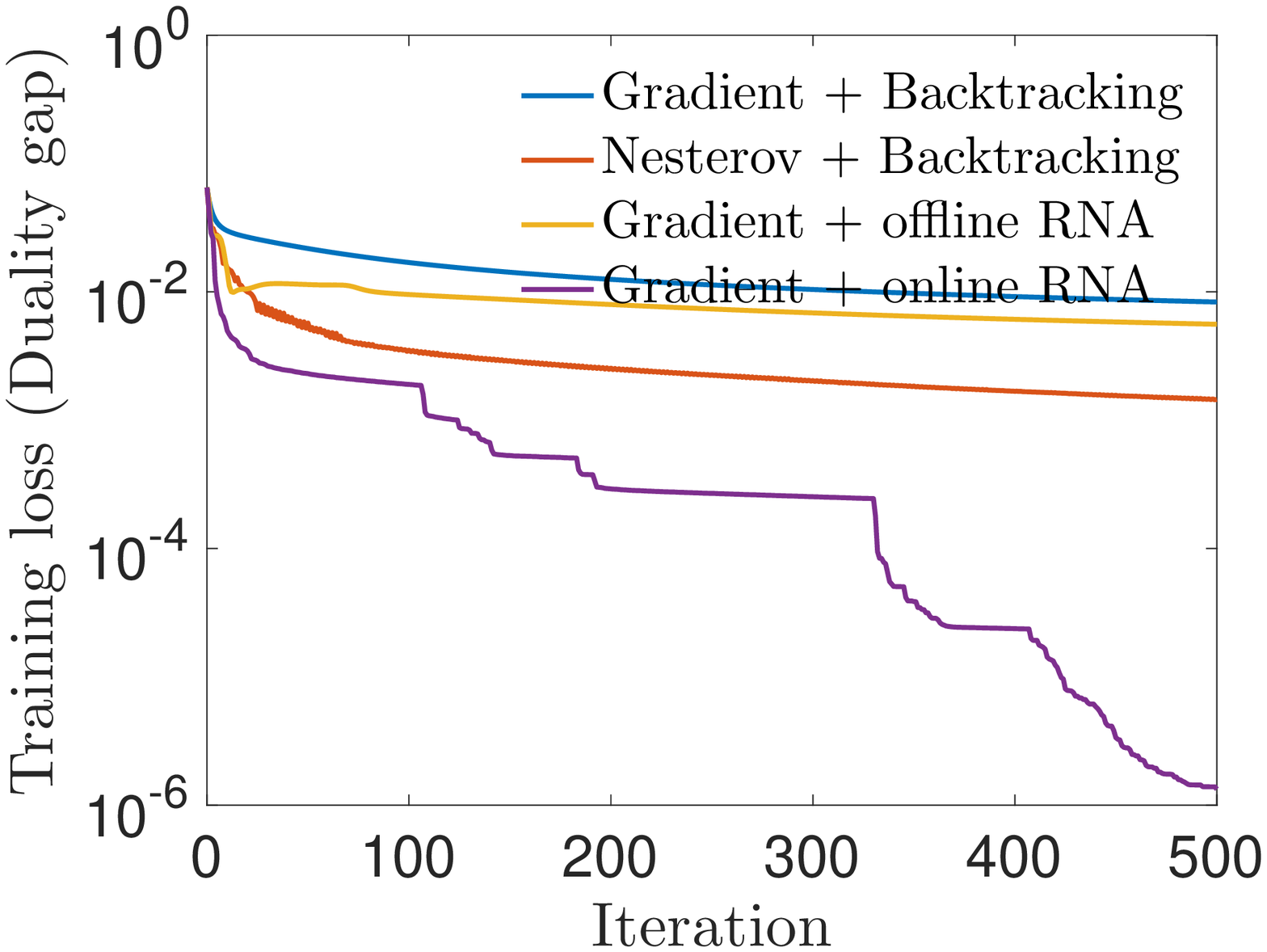}
\includegraphics[width=0.3\textwidth]{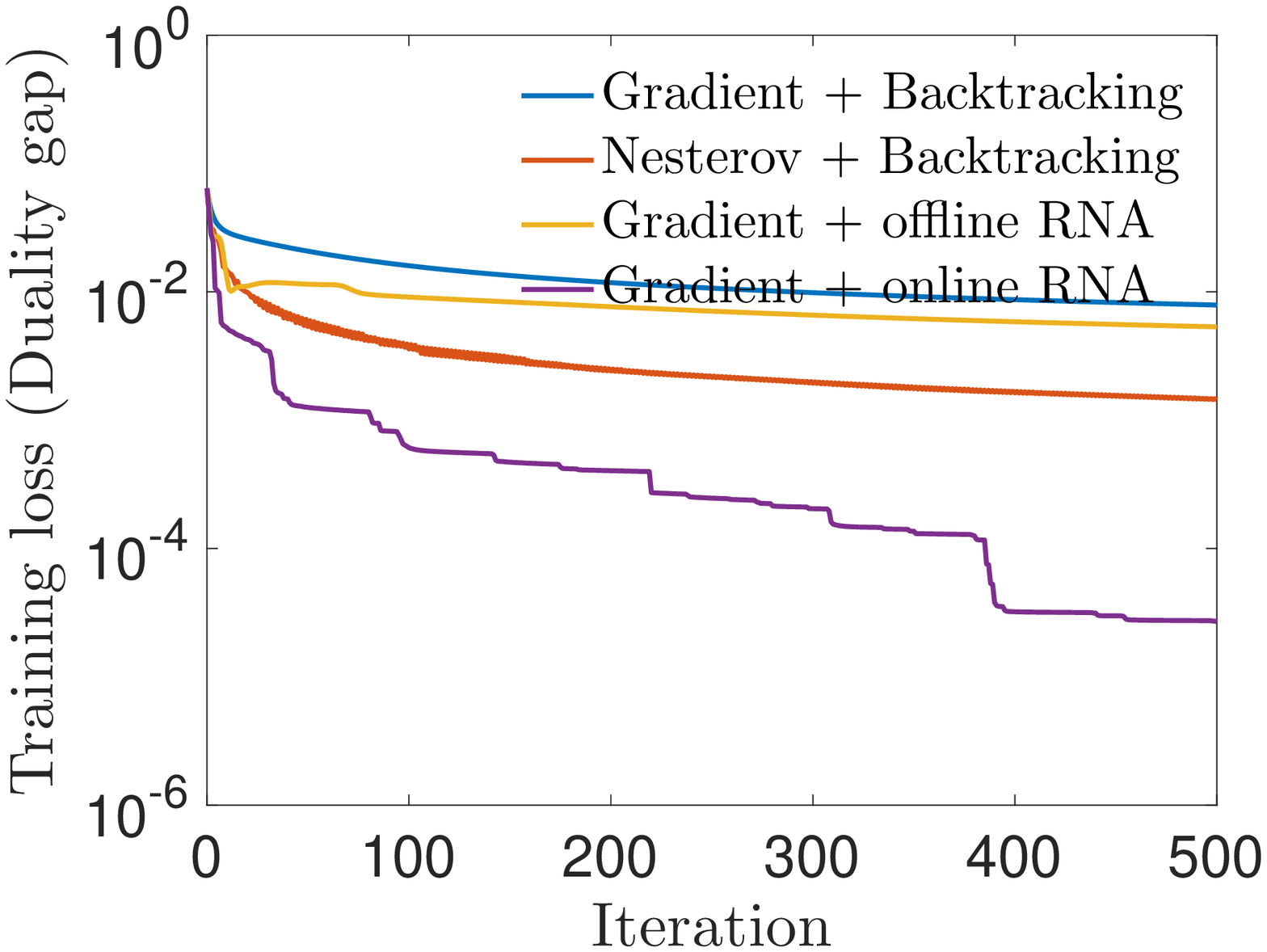}
\includegraphics[width=0.3\textwidth]{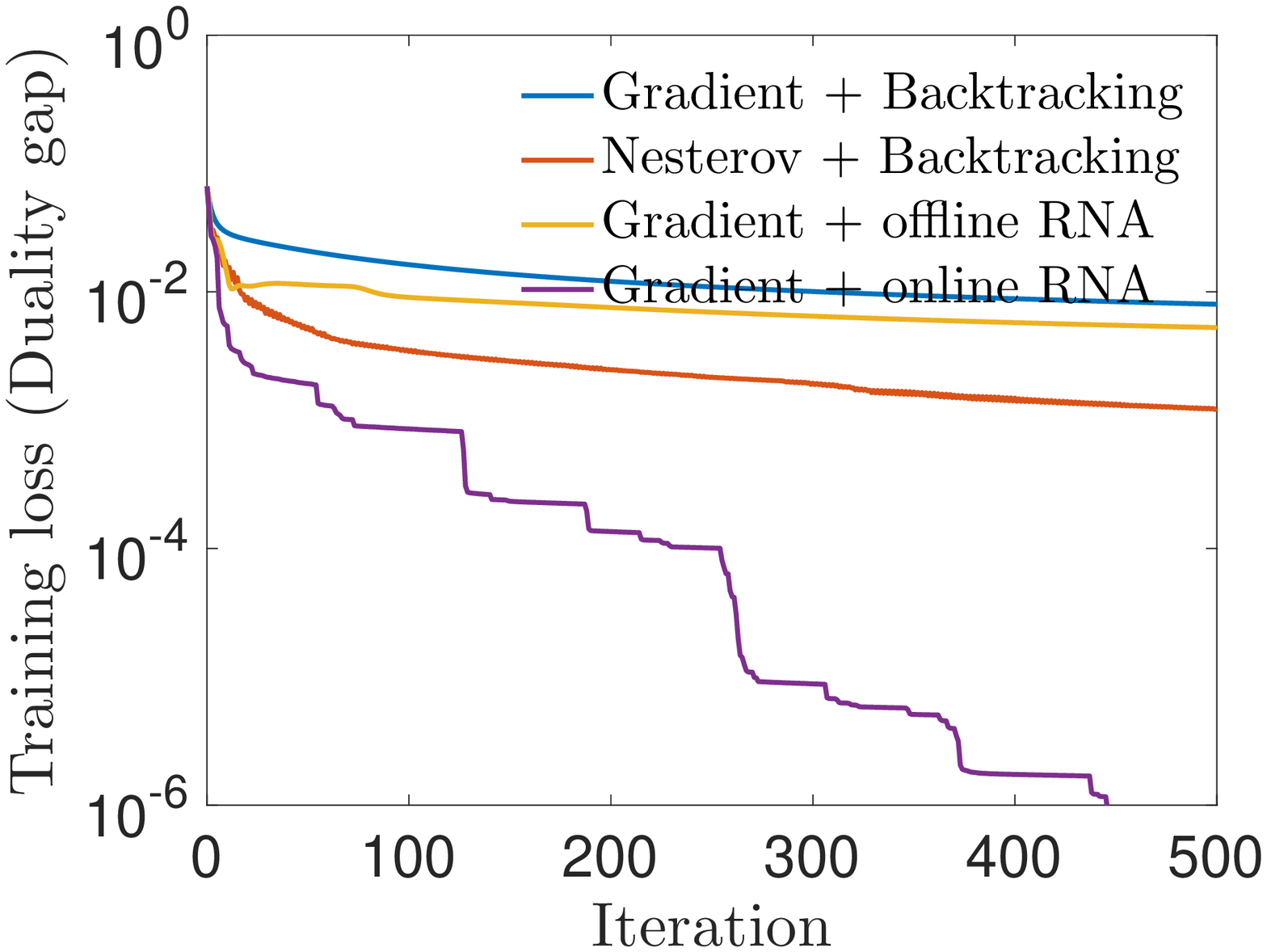}
\end{figure}
\begin{figure}[ht]
\centering
\includegraphics[width=0.3\textwidth]{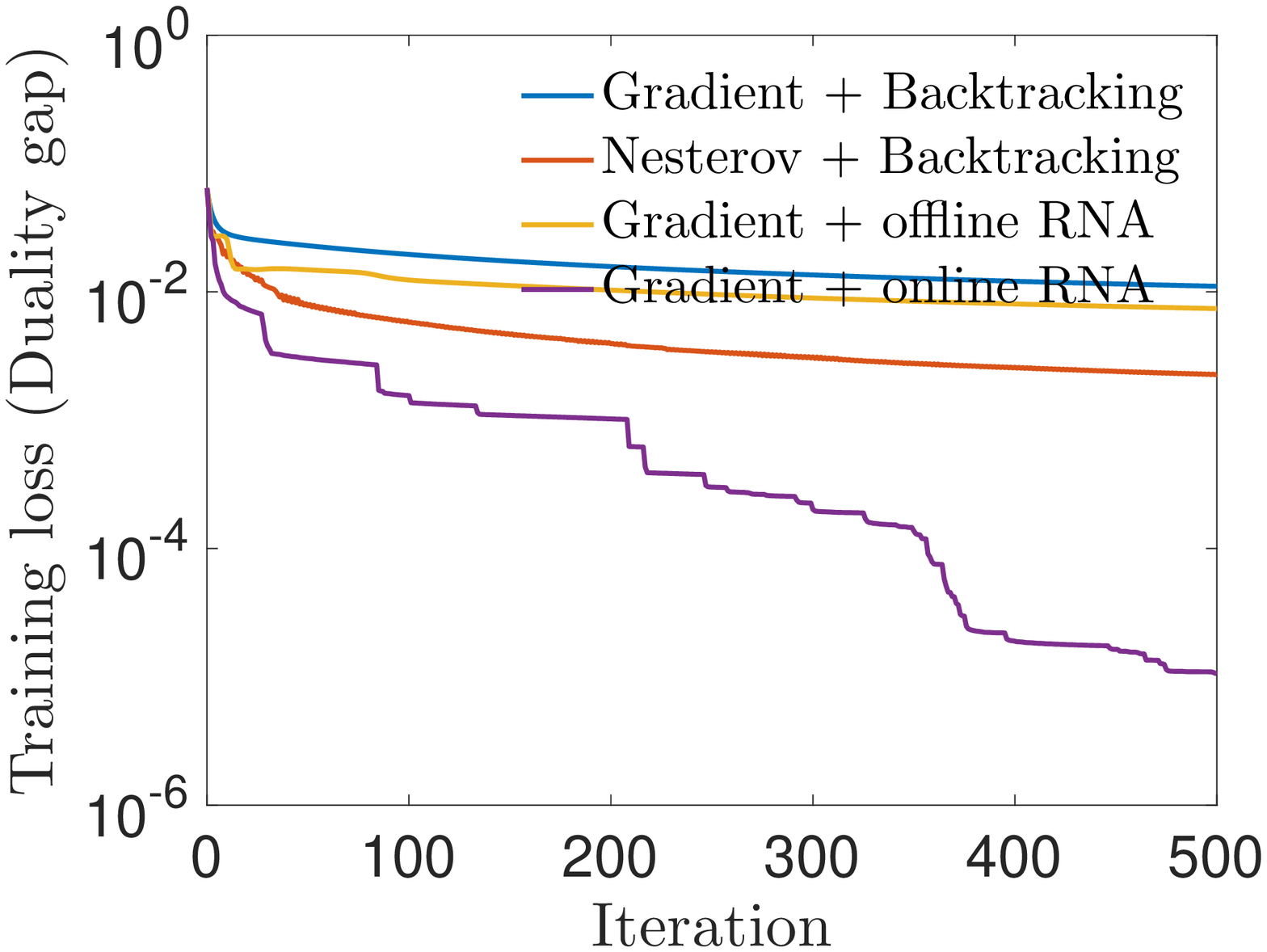}
\includegraphics[width=0.3\textwidth]{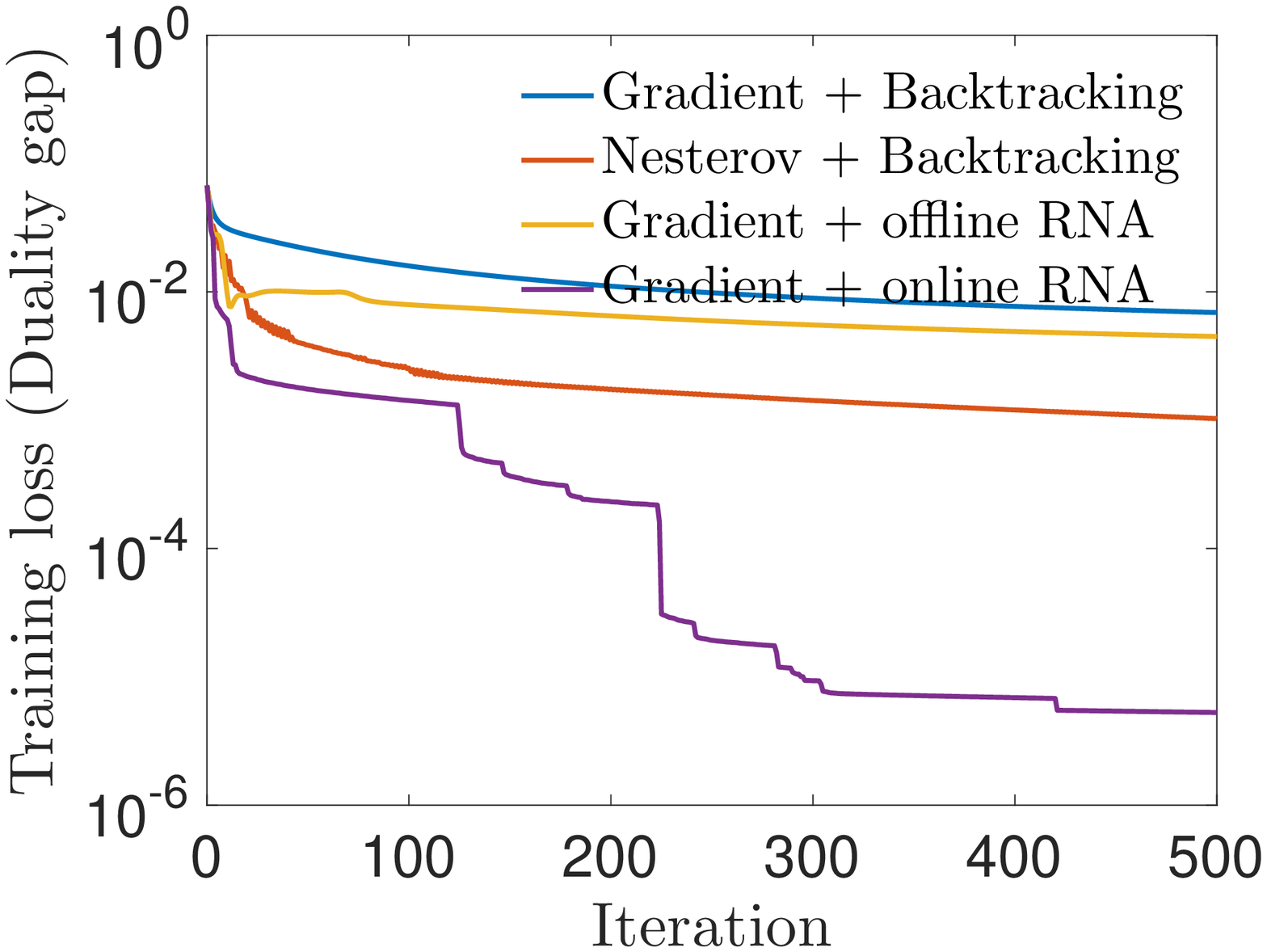}
\includegraphics[width=0.3\textwidth]{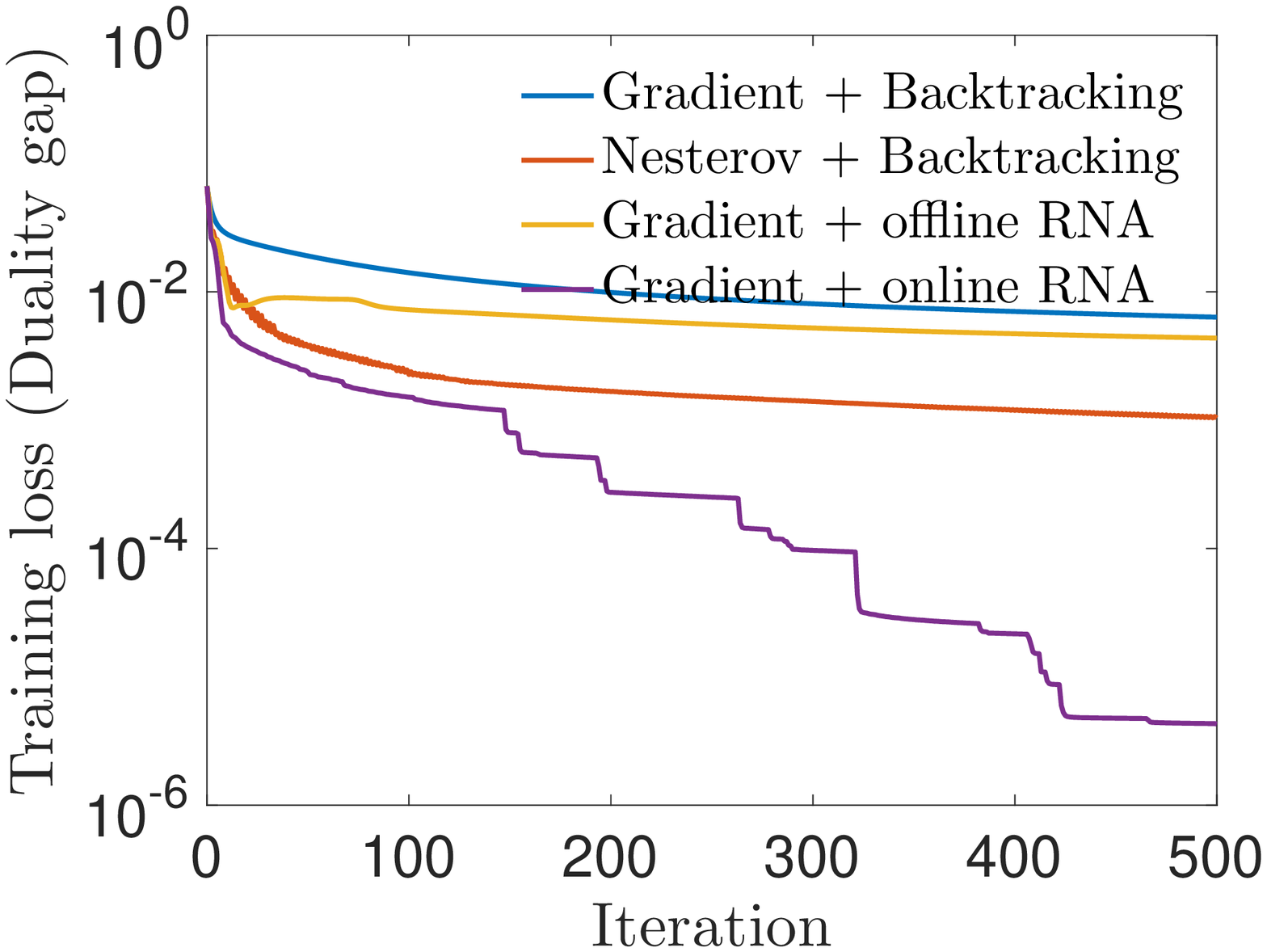}
\end{figure}
\begin{figure}[ht]
\centering
\includegraphics[width=0.3\textwidth]{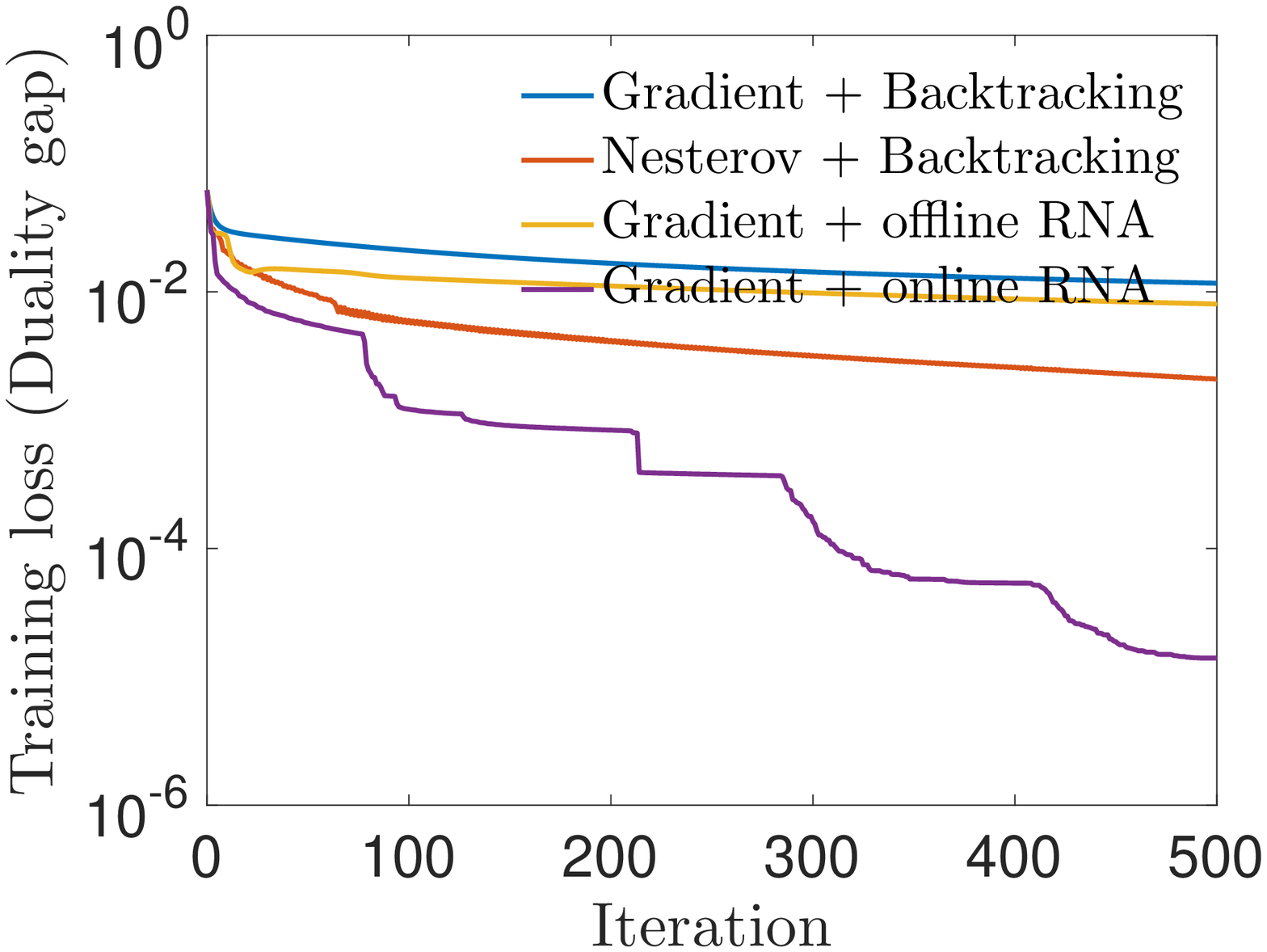}
\includegraphics[width=0.3\textwidth]{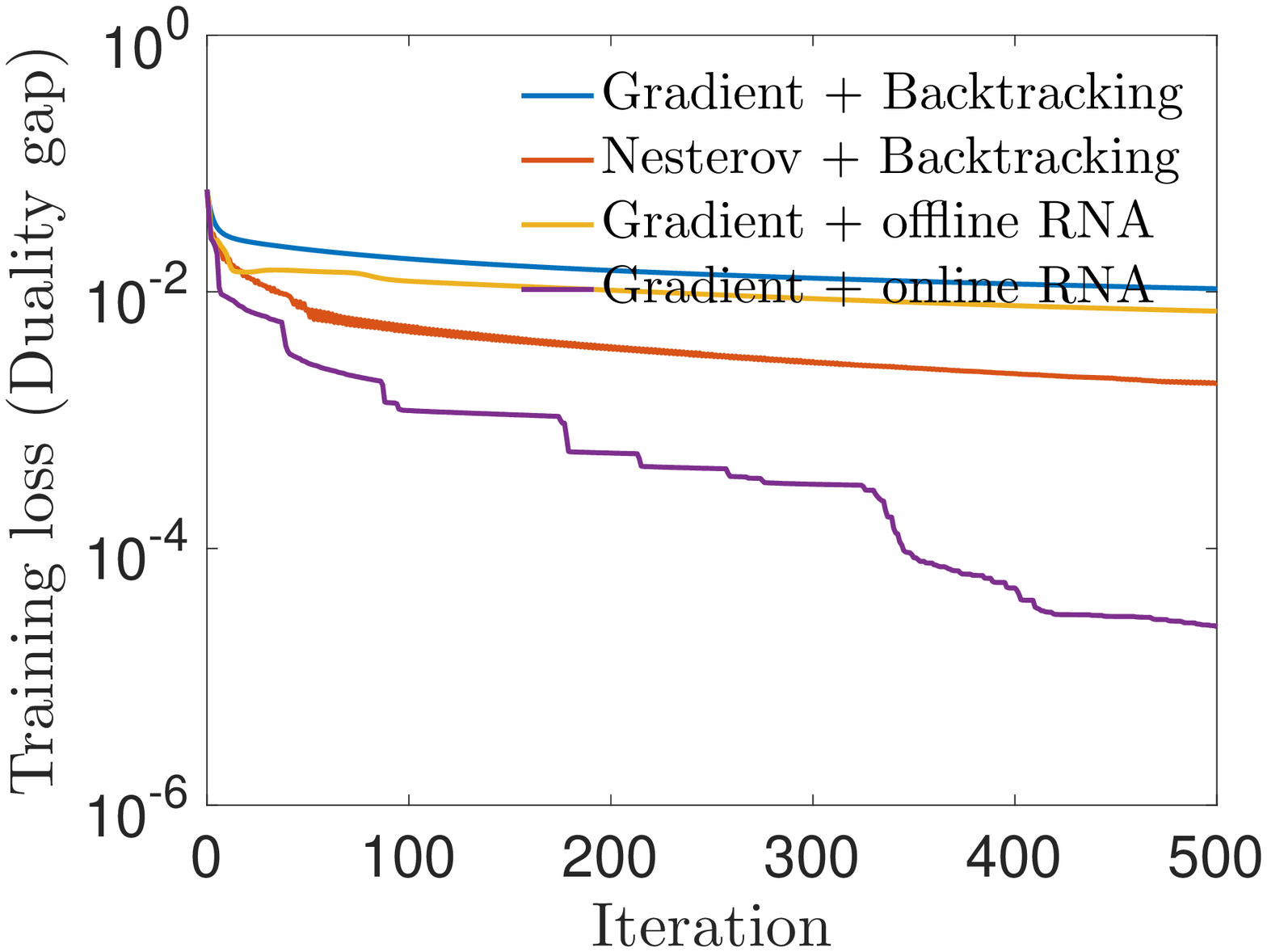}
\includegraphics[width=0.3\textwidth]{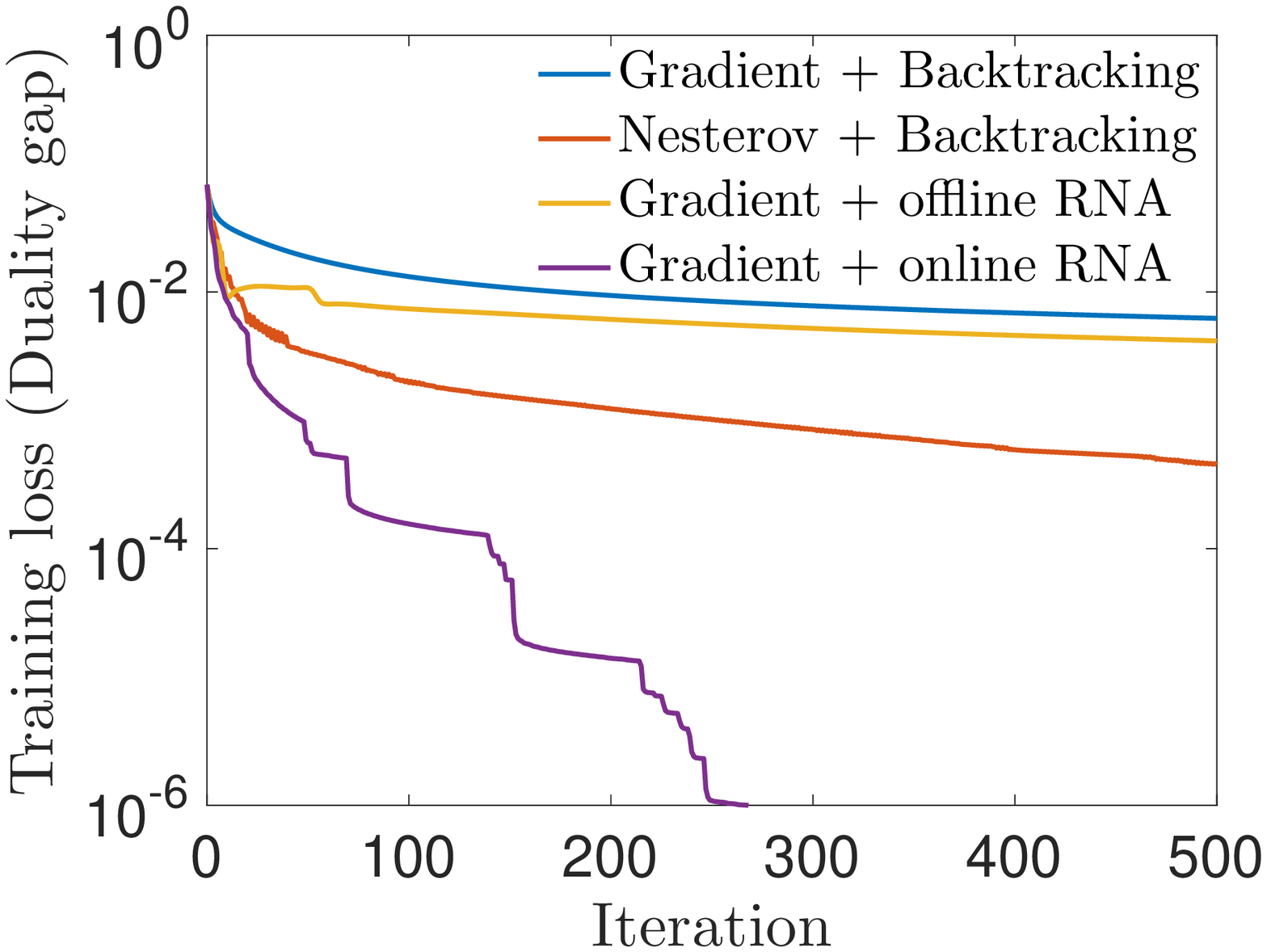}
\end{figure}
\begin{figure}[h!t]
\centering
\includegraphics[width=0.3\textwidth]{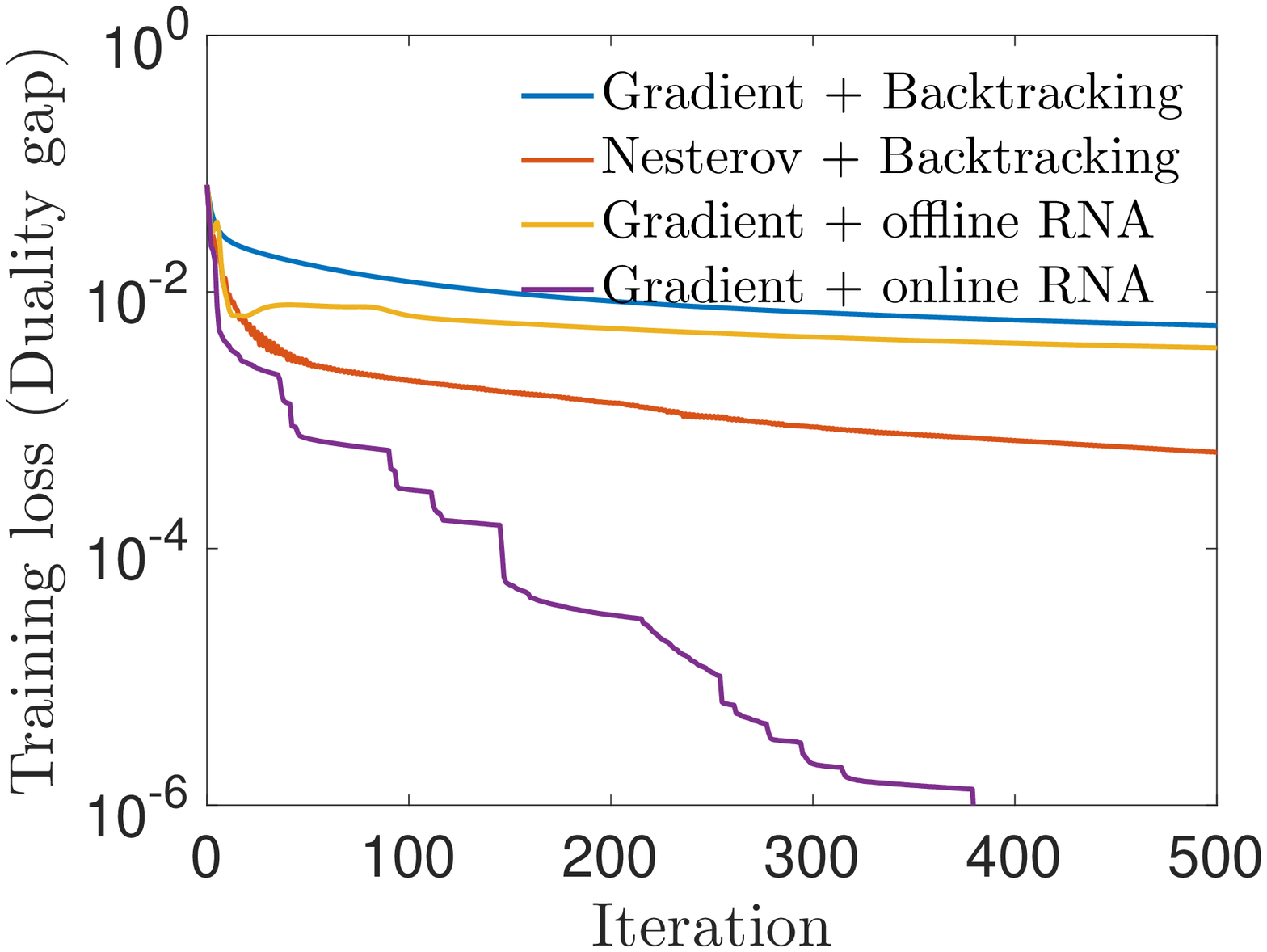}
\caption{From left to right, top to bottom: Loss value when optimizing a logistic regression (one vs all) on MNIST dataset for the numbers 1, 2, 3, 4, 5, 6, 7, 8, 9, 0.}
\end{figure}

\begin{figure}[ht]
\centering
\includegraphics[width=0.3\textwidth]{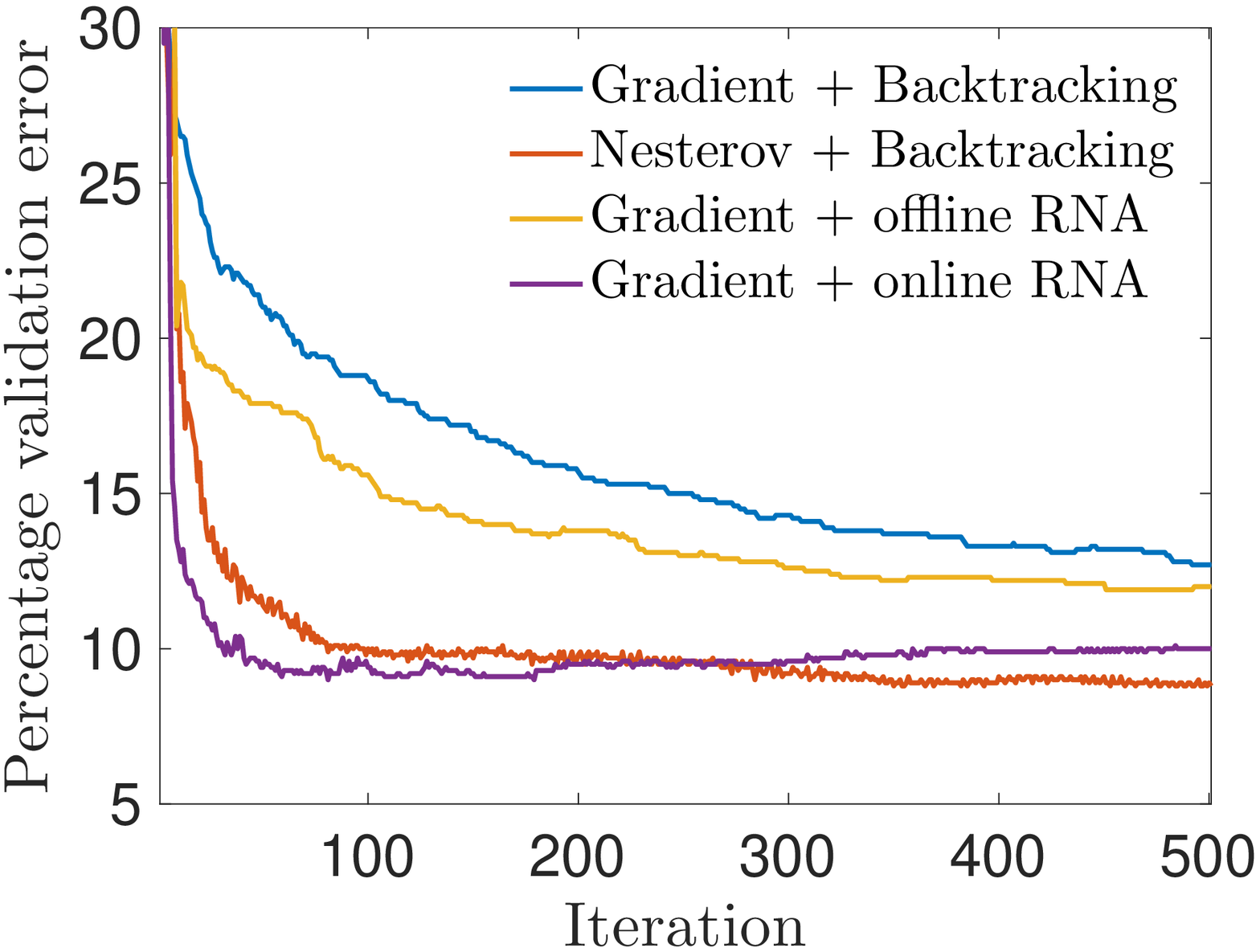}
\includegraphics[width=0.3\textwidth]{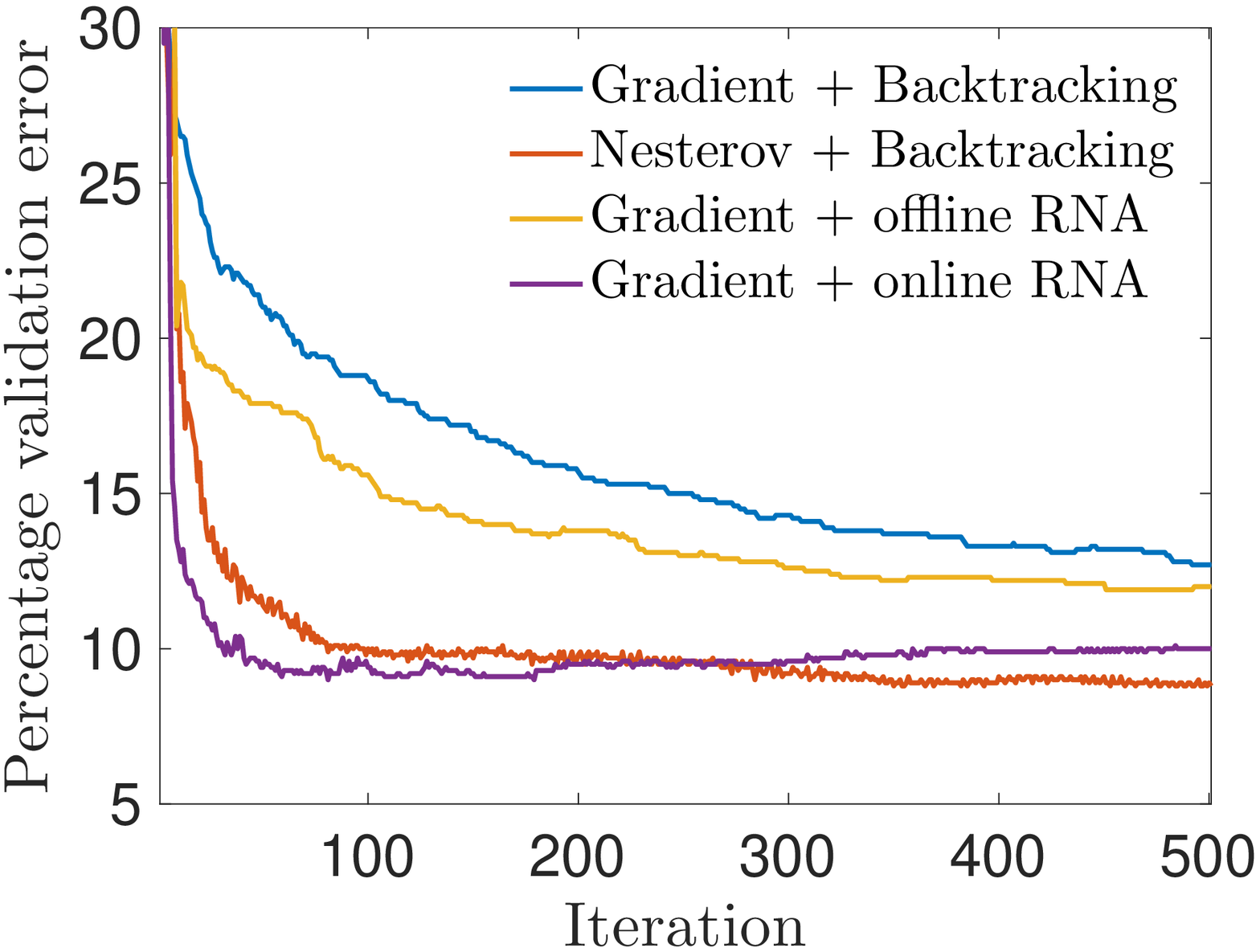}
\includegraphics[width=0.3\textwidth]{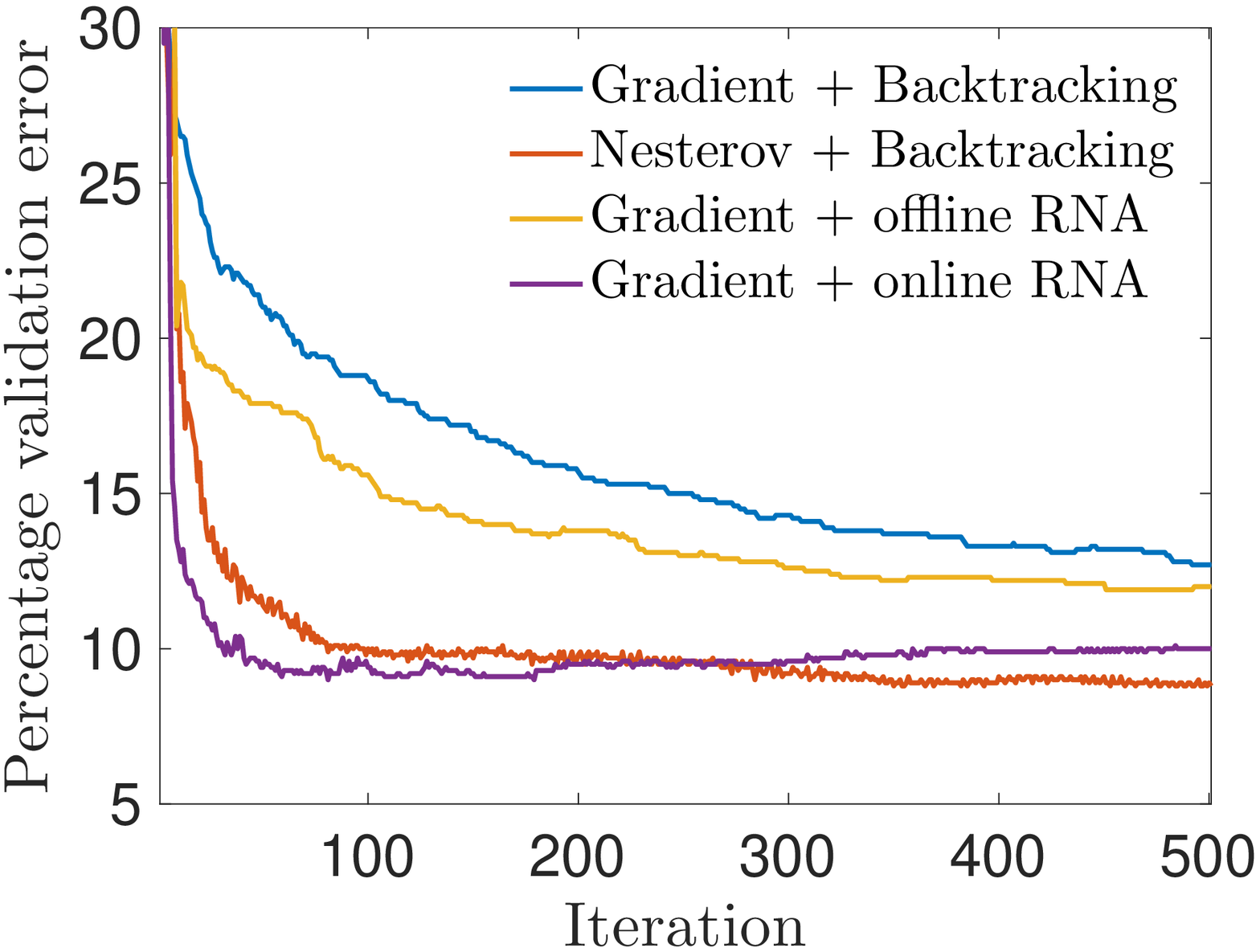}
\end{figure}
\begin{figure}[ht]
\centering
\includegraphics[width=0.3\textwidth]{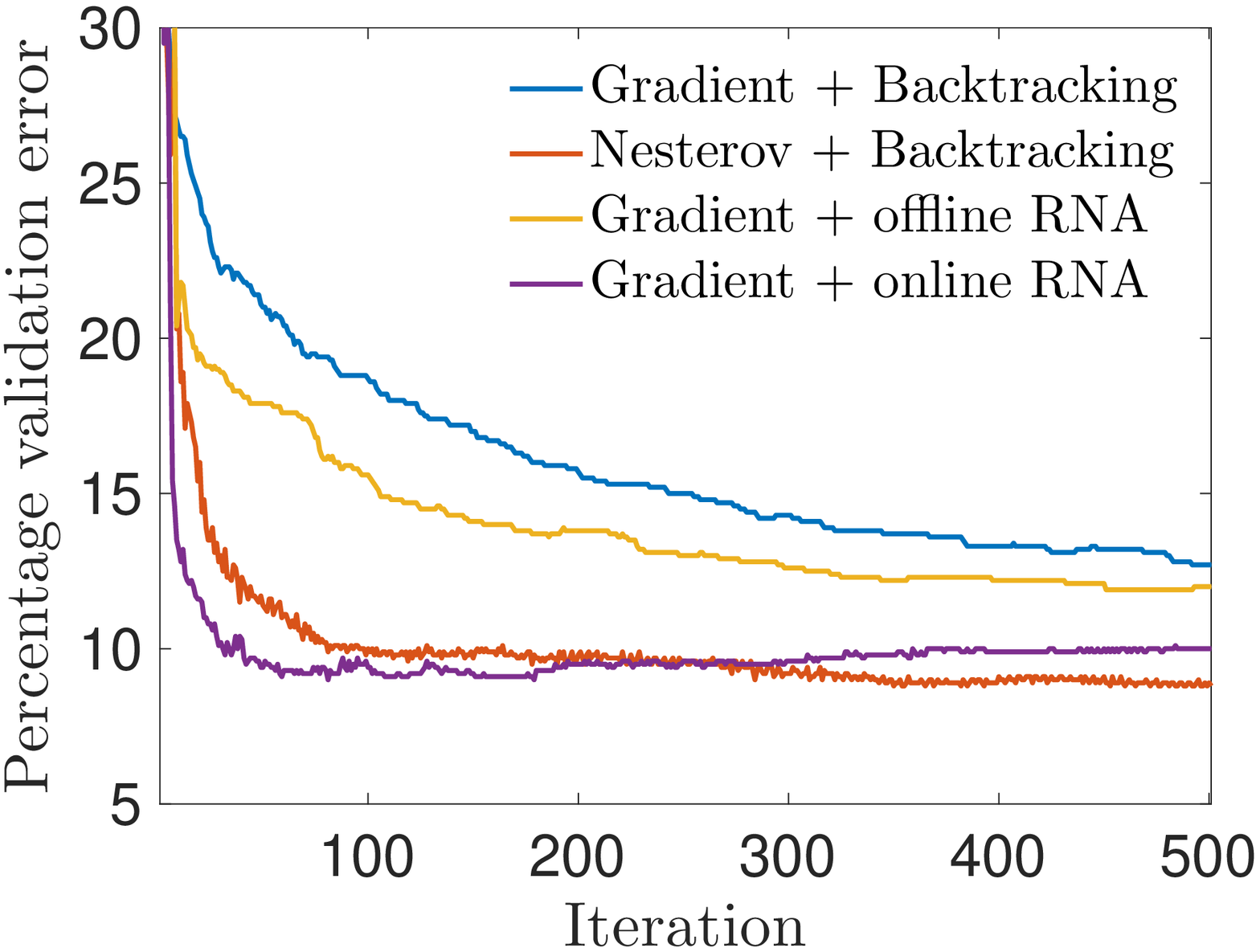}
\includegraphics[width=0.3\textwidth]{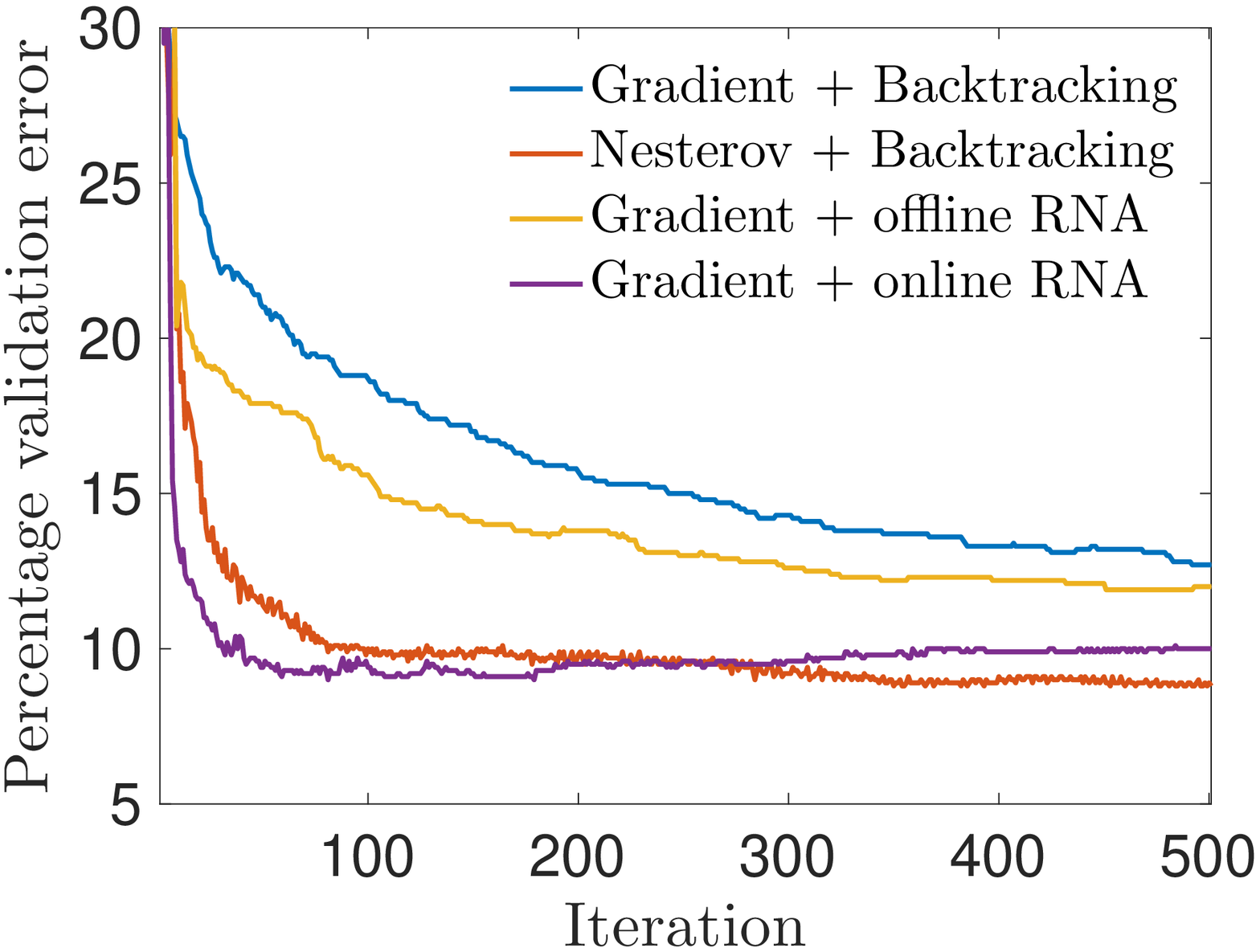}
\includegraphics[width=0.3\textwidth]{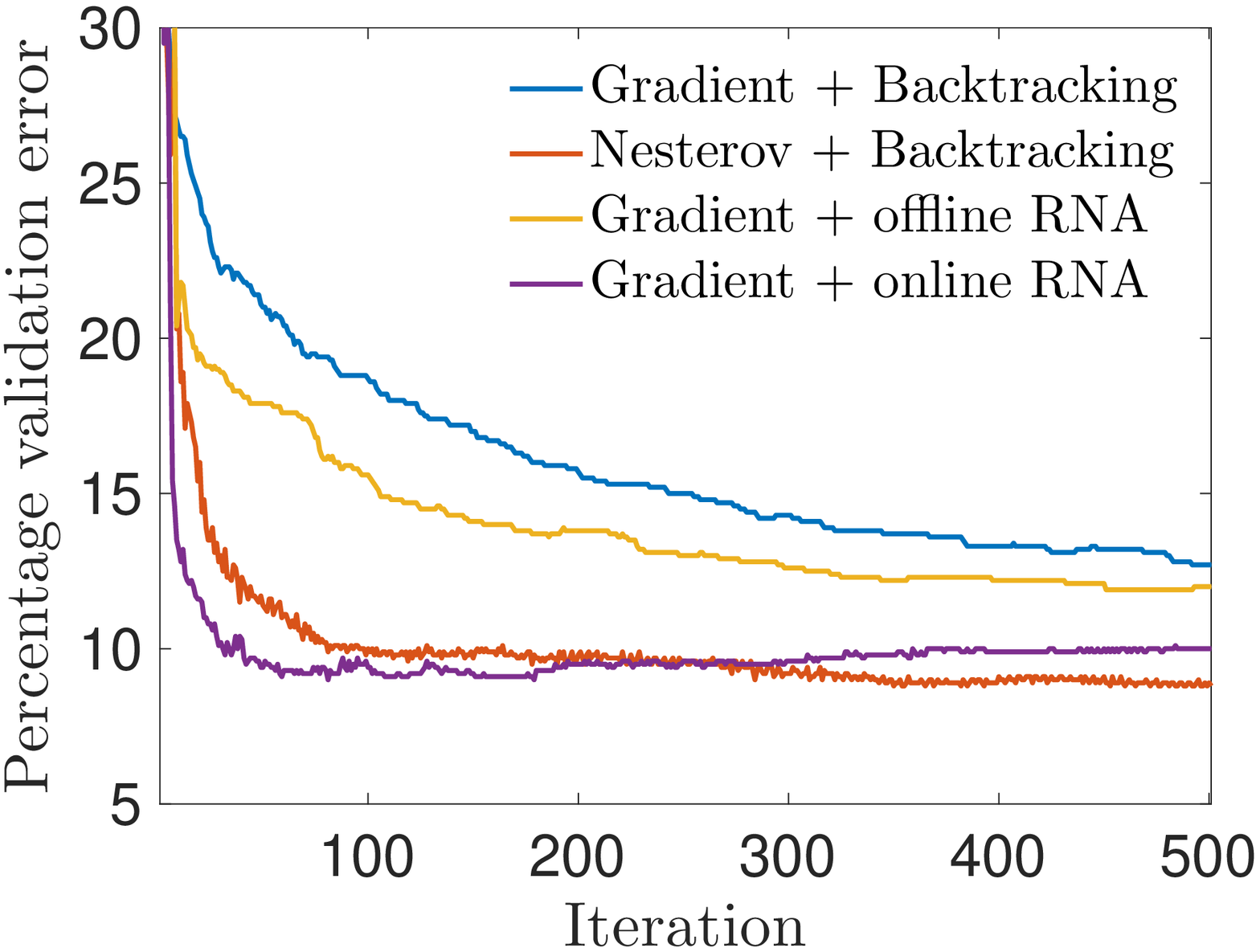}
\end{figure}
\begin{figure}[ht]
\centering
\includegraphics[width=0.3\textwidth]{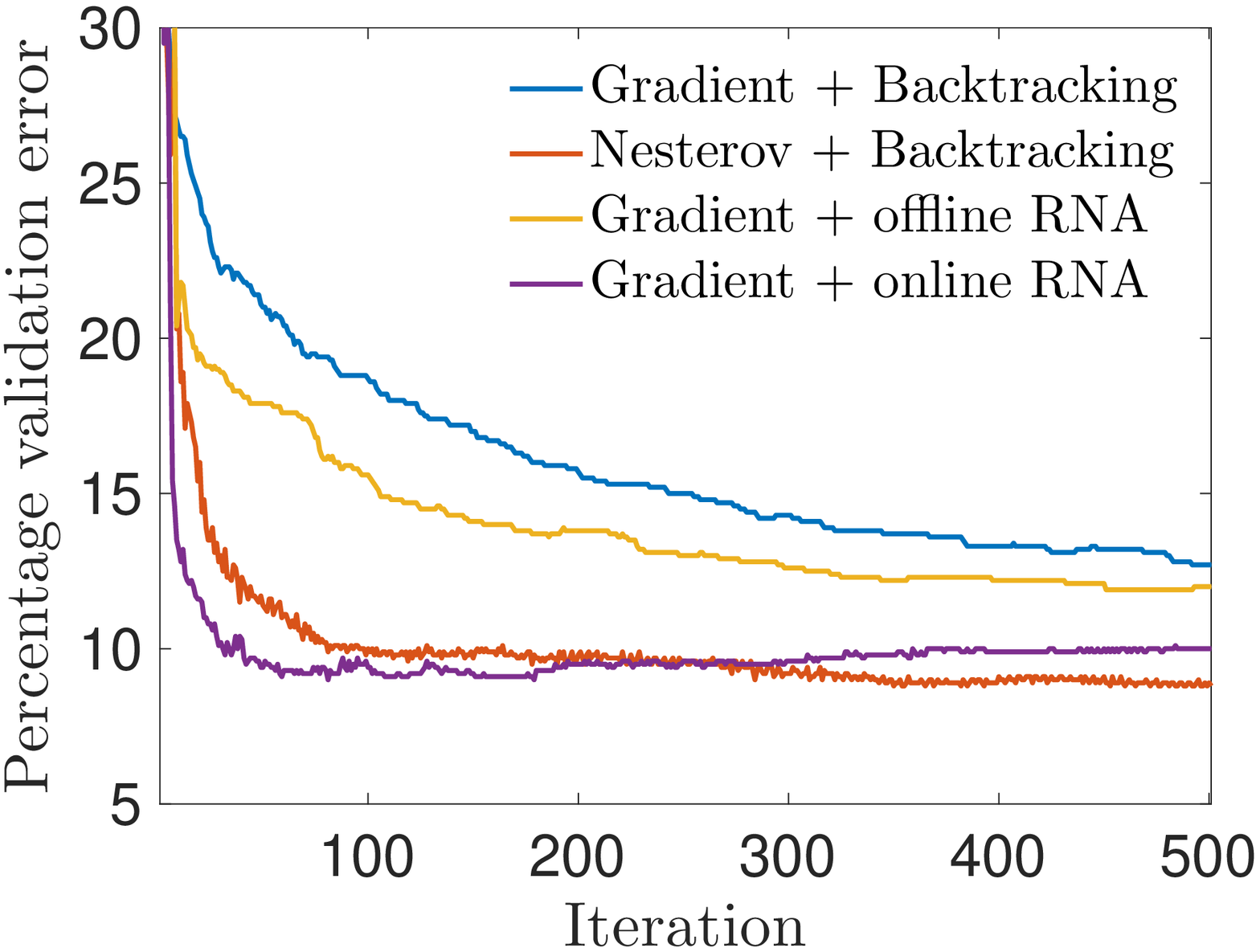}
\includegraphics[width=0.3\textwidth]{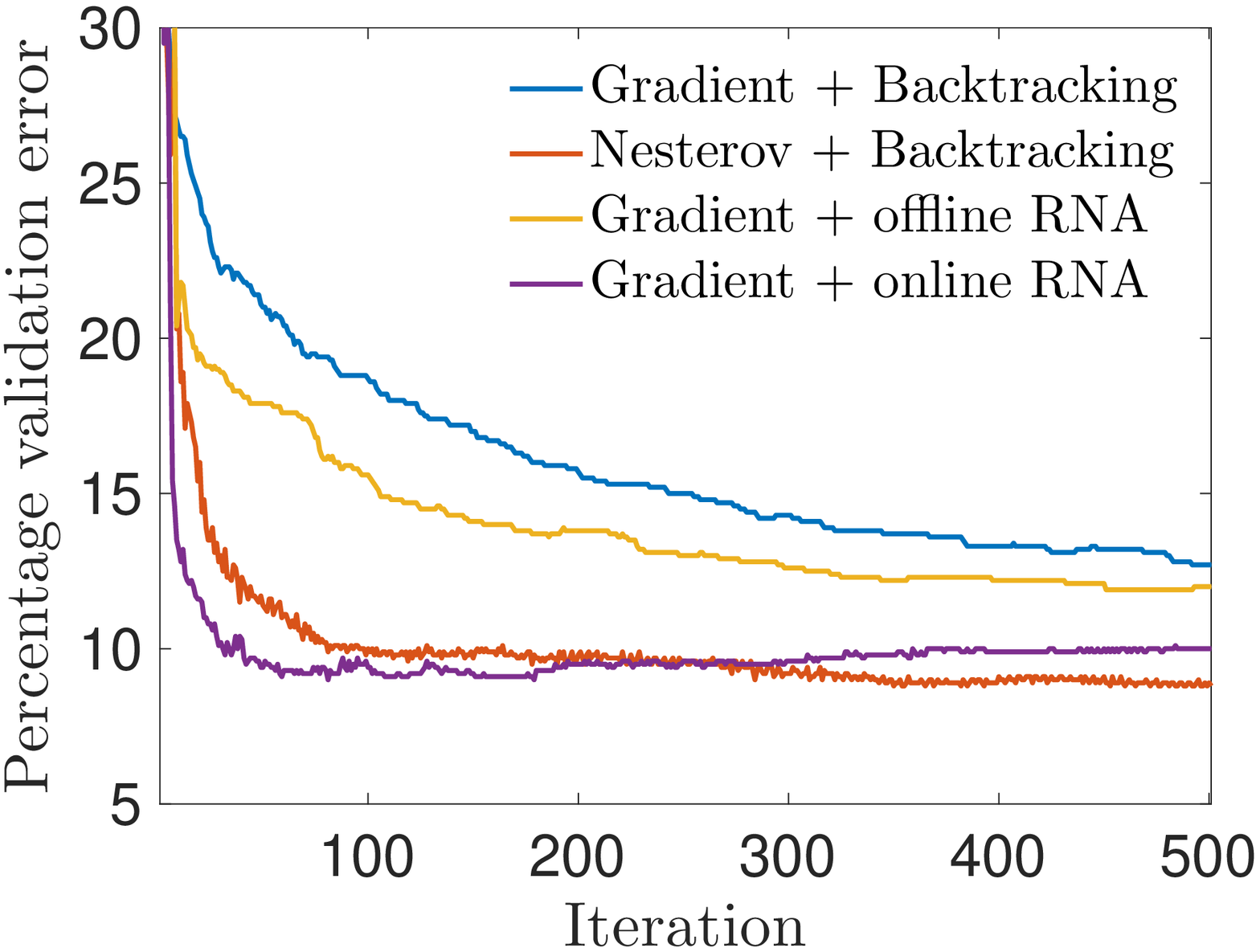}
\includegraphics[width=0.3\textwidth]{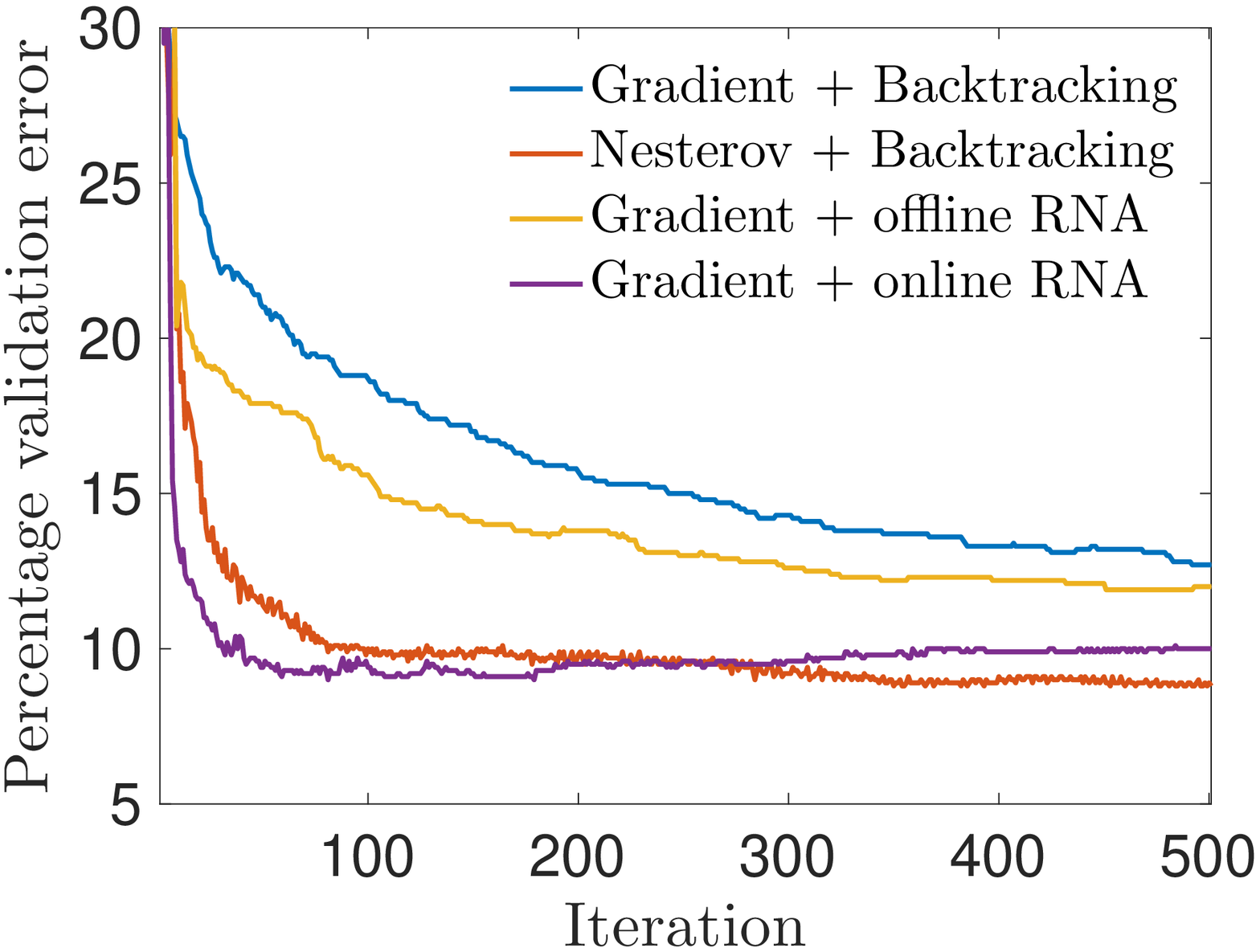}
\end{figure}
\begin{figure}[h!t]
\centering
\includegraphics[width=0.3\textwidth]{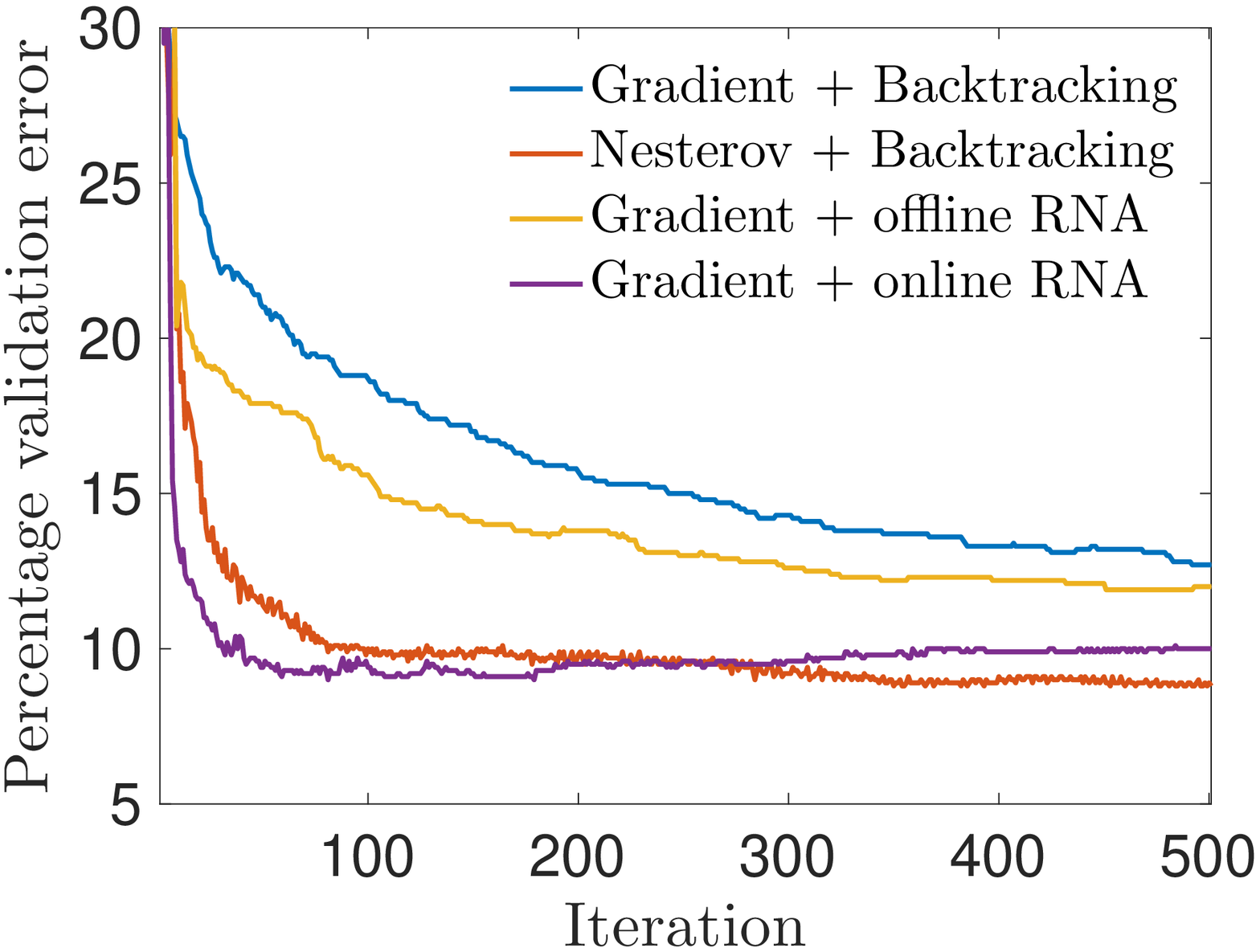}
\caption{From left to right, top to bottom: percentage validation error when optimizing a logistic regression (one vs all) on MNIST dataset for the numbers 1, 2, 3, 4, 5, 6, 7, 8, 9, 0.}
\end{figure}

\end{document}